\newtheorem{theorem}{Theorem}[section]
\newtheorem{lemma}[theorem]{Lemma}
\newtheorem{corollary}[theorem]{Corollary}
\newtheorem{TA}{Differentiable Exotic Sphere Theorem I.}
\newtheorem{TAII}{Differentiable Exotic Sphere Theorem II.}
\newtheorem{TB}{Differentiable Twisted Sphere Theorem.}
\newtheorem{proof}{\textmd{\textit{Proof.}}}
\newtheorem{remark}[theorem]{Remark}
\newtheorem{example}[theorem]{Example}
\newtheorem{definition}[theorem]{Definition}
\newtheorem{acknowledgement}{\textmd{\textit{Acknowledgements.}}}
\newcommand{\qedd}{\hfill \Box}
\newcommand{\ve}{\varepsilon}
\newcommand{\lra}{\longrightarrow}
\newcommand{\ora}{\overrightarrow}
\newcommand{\wt}{\widetilde}
\newcommand{\ol}{\overline}
\newcommand{\B}{\ensuremath{\mathbb{B}}}
\newcommand{\R}{\ensuremath{\mathbb{R}}}
\newcommand{\Sph}{\ensuremath{\mathbb{S}}}
\newcommand{\cN}{\ensuremath{\mathcal{N}}}
\newcommand{\cP}{\ensuremath{\mathcal{P}}}
\newcommand{\cU}{\ensuremath{\mathcal{U}}}
\def\diam{\mathop{\mathrm{diam}}\nolimits}
\def\vol{\mathop{\mathrm{vol}}\nolimits}
\def\supp{\mathop{\mathrm{supp}}\nolimits}
\def\id{\mathop{\mathrm{id}}\nolimits}
\def\Ric{\mathop{\mathrm{Ric}}\nolimits}
\def\Cut{\mathop{\mathrm{Cut}}\nolimits}
\def\bL{\mathop{\mathrm{Lip^b}}\nolimits}
\def\L{\mathop{\mathrm{Lip}}\nolimits}
\def\Crit{\mathop{\mathrm{Crit}}\nolimits}
\def\Conv{\mathop{\mathrm{Conv}}\nolimits}
\title{Approximations of Lipschitz maps via immersions\\
and differentiable exotic sphere theorems\footnote{
2010 Mathematics Subject Classification: Primary 49J52, 53C20;
Secondary 57R12, 57R55.}
\footnote{Key words and phrases: bi-Lipschitz homeomorphism, 
differentiable sphere theorem, exotic spheres, Lipschitz map, 
non-smooth analysis, smooth approximation.}}
\author{Kei KONDO\footnote{Partly supported by the two 
Grant-in-Aids for Science Reserch (C), 
JSPS KAKENHI Grant Numbers 15K04846 and 16K05133.} \ $\cdot$ \ Minoru TANAKA}
\date{\today}
\begin{document}
\maketitle

\begin{abstract}
As our main theorem, we prove that a Lipschitz map 
from a compact Riemannian manifold $M$ 
into a Riemannian manifold $N$ 
admits a smooth approximation via immersions if the map has 
no singular points on $M$ in the sense of F.H.\,Clarke, 
where $\dim M \le \dim N$. As its corollary, 
we have that if a bi-Lipschitz homeomorphism 
between compact manifolds and its inverse map have no singular points in the same sense, then they are diffeomorphic. We have three applications 
of the main theorem: The first two of them are two differentiable sphere theorems for a pair of topological spheres including that of exotic ones. 
The third one is that a compact $n$-manifold $M$ is a twisted sphere 
and there exists a bi-Lipschitz homeomorphism 
between $M$ and the unit $n$-sphere $S^n(1)$ 
which is a diffeomorphism except for a single point, 
if $M$ satisfies certain two conditions with respect 
to critical points of its distance function 
in the Clarke sense. Moreover, we have three corollaries 
from the third theorem; 
the first one is that for any twisted sphere $\Sigma^n$ 
of general dimension $n$, there exists a bi-Lipschitz homeomorphism between $\Sigma^n$ and $S^n(1)$ 
which is a diffeomorphism except for a single point. 
In particular, there exists such a map 
between an exotic $n$-sphere $\Sigma^n$ 
of dimension $n>4$ and $S^n(1)$; 
the second one is that if an exotic $4$-sphere 
$\Sigma^4$ exists, 
then $\Sigma^4$ does not satisfy one 
of the two conditions above; 
the third one is that for any Grove-Shiohama type $n$-sphere $N$, there exists a bi-Lipschitz homeomorphism between $N$ and $S^n(1)$ which is a diffeomorphism except for one of points that attain their diameters.
\end{abstract}

\section{Introduction}\label{sec1}%%%%%%%%%%%%%%%%%%%%%%%%%%%%%%%%%%%%%%%%%%%%%%%%%%%%%%%%%%%%%%%%%%%%%%%%%%%%%%%%%%%%%%%%%%%%%%%%%%%%%%%%%%%%%%%%%%%%%%%%%%%%%%%%%%%%%%%%%%%%%%%%%%%%%%%%%%%%%%%%%%%%%%%%%%%%%%%%

\subsection{Motivations and our main theorem}\label{2016_01_10_motiv}

We first mention our motivations behind our purpose, which are 
exotic structures: No one needs the introduction of exotic $n$-dimensional spheres 
$\Sigma^n$, however $\Sigma^n$ were very first discovered by Milnor \cite{M} in the case of $n=7$, which is, by definition, homeomorphic to the standard $n$-sphere $S^n$ but not diffeomorphic to it. Note that  
the smooth $4$-dimensional Poincar\'e conjecture (SPC$4$), that is, 
the problem of the existence of an exotic structure on the $4$-sphere, is still open.\par   
Due to exotic structures, we always have technical difficulties 
when investigating whether a topological sphere theorem can 
be reinforced into a differentiable sphere theorem. 
The difficulties become clearer from global Riemannian geometry's 
stand point: It follows from Smale's h-cobordism theorem \cite{Sm2} together with \cite{Sm1} that every homotopy sphere of dimension 
$n\ge5$ is a twisted sphere, which is a smooth manifold obtained 
by glueing two standard $n$-discs along their 
boundaries under a boundary diffeomorphism. 
This implies that every 
$\Sigma^n$ ($n>4$) is actually twisted, since $\Sigma^n$ 
is a homotopy sphere. Applying Weinstein's deformation technique 
for metrics (\cite[Proposition C]{W}) to both of two discs embedded smoothly into a twisted sphere $X$ of general dimension, 
we see that $X$ admits a metric such that 
the cut locus\footnote{The cut locus $\Cut (p)$ of a point $p$ 
in a complete Riemannian manifold $M$ is, by definition, 
the closure of the set of all points $x \in M$ 
such that there are at least two minimal geodesics emanating from $p$ to $x$. Then, a point in $\Cut (p)$ is called the cut point of $p$. 
For example, $\Cut (N)$ of $N:=(0,\ldots,0,1)\in S^n(1)$ is 
$\{(0,\ldots,0,-1)\}$, where $S^n(1):=\{v \in \R^{n+1}\,|\,\|v\|=1\}$. 
Note that the distance function from a point $x$ of $M$ is 
not differentiable at the cut point of $x$.} of some point 
on $X$ is a single point. By all together above, we have 

\begin{theorem}\label{2015_07_27_weinstein}
\label{2015_03_31_thm1.2}{\rm (Also see \cite[Proposition 7.19]{B})} Every exotic sphere $\Sigma^n$ 
of dimension $n>4$ admits a metric such that 
there is a point whose cut locus consists 
of a single point.
\end{theorem}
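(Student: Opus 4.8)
The plan is to carry out in detail the two steps indicated above: first reduce $\Sigma^n$ to a differential-topological normal form, then transplant onto that form a metric modelled on the round sphere. Since $\Sigma^n$ is homeomorphic to $S^n$, the discussion preceding the statement (resting on \cite{Sm1,Sm2}) shows that, as $n>4$, $\Sigma^n$ is diffeomorphic to a twisted sphere $\Sigma_\phi=D^n_+\cup_\phi D^n_-$, obtained from two copies $D^n_\pm$ of the standard closed $n$-disc by identifying their boundary $(n-1)$-spheres through a diffeomorphism $\phi$. Hence it suffices to equip an arbitrary twisted sphere $\Sigma_\phi$ with a metric having a point whose cut locus is a single point.

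The round sphere is itself $D^n_+\cup_{\mathrm{id}}D^n_-$, with $D^n_\pm$ the closed geodesic balls of radius $\pi/2$ about the poles, and for the north pole $o$ one has $\Cut(o)=\{o'\}$ ($o'$ the south pole), $\exp_o$ being a diffeomorphism of the open $\pi$-ball in $T_oS^n(1)$ onto $S^n(1)\setminus\{o'\}$ that collapses the bounding sphere to $o'$. Imitating this, I would put on $D^n_+$ a rotationally symmetric metric $g_+=dt^2+f_+(t)^2\,g_{S^{n-1}(1)}$ about its centre $o$, with $f_+>0$ on $(0,R_+]$, $f_+\equiv1$ near $t=R_+$, and $f_+$ odd with $f_+'(0)=1$ near $t=0$; then a collar of $\partial D^n_+$ is the product $dt^2+g_{S^{n-1}(1)}$ and $\partial D^n_+$ is totally geodesic. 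On $D^n_-$ I would construct, in geodesic polar coordinates about its centre $o'$, a metric $g_-=d\rho^2+h_\rho$ with $h_\rho\sim\rho^2 g_{S^{n-1}(1)}$ near $\rho=0$ (so that it closes up smoothly at $o'$), with $h_\rho\equiv(\phi^{-1})^*g_{S^{n-1}(1)}$ near $\rho=R_-$ (so that glueing $D^n_+$ to $D^n_-$ along $\phi$ produces a \emph{smooth} metric $g$ on $\Sigma_\phi$ whose glueing sphere is totally geodesic on both sides), and with $h_\rho$ positive-definite and free of conjugate points along radial directions for $\rho\in(0,R_-]$. Producing such a family $h_\rho$ for an \emph{arbitrary} diffeomorphism $\phi$ is exactly what Weinstein's metric deformation technique (\cite[Proposition~C]{W}; see also \cite[Proposition~7.19]{B}) supplies, and it is the delicate step: one must interpolate on $D^n_-$ between the $\phi$-twisted product metric that is forced near the boundary and a rotationally symmetric metric near the centre \emph{without introducing conjugate points}, which is subtle precisely when $\phi$ is far from an isometry.

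Granting this, it remains to read off the cut locus of $o$ for the glued metric $g=g_+\cup_\phi g_-$. Near the glueing sphere $g$ is a product with totally geodesic slices, so each unit-speed $g$-geodesic from $o$ runs radially across $D^n_+$, crosses the glueing sphere orthogonally, continues as a radial line of $d\rho^2+h_\rho$ across $D^n_-$, and arrives at $o'$ after the common length $L=R_++R_-$; by the properties above, none of these geodesics meets a conjugate or cut point before reaching $o'$, and each is the unique geodesic from $o$ to any point on it. Therefore $\exp_o$ maps the open $L$-ball in $T_o\Sigma_\phi$ diffeomorphically onto $\Sigma_\phi\setminus\{o'\}$ and collapses its bounding sphere to $o'$, so $\Cut(o)=\{o'\}$. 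Apart from the Weinstein deformation, the argument uses only the differential topology recalled above and routine computations with metrics of the form $d\rho^2+h_\rho$.
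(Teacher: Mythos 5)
Your proposal is correct and takes essentially the same route as the paper: Smale's theorems reduce $\Sigma^n$ ($n>4$) to a twisted sphere $D^n\cup_\phi D^n$, and the delicate metric construction on the disc(s) is delegated to Weinstein's deformation technique \cite[Proposition C]{W} (cf.\ \cite[Proposition 7.19]{B}), after which the cut locus of the chosen point is read off as the single antipodal point. The only cosmetic difference is that you fix an explicit rotationally symmetric metric on one disc and invoke Weinstein only on the other, whereas the paper (see Remark \ref{2015_08_08_rem}) applies Weinstein successively to both discs so that all geodesics from $p_1$ cross the common boundary orthogonally and hence all pass through $q_1$.
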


Thus, it is very difficult for us to notice the difference between 
$\Sigma^n$ and $S^n$ from the point of view 
of two exponential maps at the single point on $\Sigma^n$ 
and at any point on $S^n$. 
For example, one of open problems in global Riemannian 
geometry is if a Grove-Shiohama type $n$-sphere can be 
diffeomorphic to $S^n$. Here, a complete Riemannian manifold $V$ 
is called a {\em Grove-Shiohama type sphere} if 
sectional curvature $K_V \ge 1$ and its diameter $\diam (V) > \pi/2$ (\cite{GS}). 
Since such a $V$ is twisted, 
from Theorem \ref{2015_07_27_weinstein} 
we can infer that some single cut point on $V$ 
is a big obstacle whenever 
approximating a homeomorphism, 
in fact which is bi-Lipschitz, between $V$ and $S^n$ 
by diffeomorphisms.

\bigskip

Hence, there is a cardinal importance to do an analysis 
of such singular points on an arbitrary manifold. For this, 
we employ a notion used in non-smooth analysis 
of F.H. Clarke (\cite{C1}, \cite{C2}), i.e., 
a non-singular point for a Lipschitz function/map in this article. 
The following example shows that the non-smooth analysis 
is a strong tool in differential geometry.  

\begin{example}\label{2016_03_28_exa_1}
Let $M$ be a complete Riemannian manifold, 
$d$ the distance function of $M$. Take any point $p \in M$, and fix it. 
Set $d_p (x):= d(p, x)$ for all $x \in M$. 
Then, the point $q \in M \setminus \{ p \}$ 
is a {\em critical point of $d_p$} 
(or {\em critical point for $p$}) in the sense 
of Grove-Shiohama \cite{GS}, 
if for every nonzero tangent vector $v \in T_qM$ at $q$, 
there exists a minimal geodesic segment $\gamma$ emanating from $q$ to $p$ such that  
\[
\angle(v,\dot{\gamma}(0)) \le \frac{\pi}{2}.
\] 
Here, $\angle(v,\dot{\gamma}(0))$ denotes 
the angle between $v$ and $\dot{\gamma}(0):=(d\gamma/dt)(0)$. Note that 
a critical point of $d_p$ is the cut point of $p$. 
Assume that, for some $r>0$, 
$\partial B_r (p):=\{x\in M\,|\,d(p,x)=r\}$ has no 
critical points of $d_p$. 
By Gromov's isotopy lemma \cite{G1}, 
$\partial B_r (p)$ is a topological submanifold of $M$. 
Since $\partial B_r (p)$ is also free of critical points of $d_p$ 
in the sense of Clarke (See Example \ref{2016_03_25_exa_GS}), 
it follows from Clarke's implicit function theorem \cite{C4} that 
\begin{center}
in fact, $\partial B_r (p)$ is a {\em Lipschitz} submanifold of $M$.
\end{center}  
We are going to give the definition of the Clarke sense later (See  
Definition \ref{2014_02_20_def_1} in Sect\,\ref{2016_01_24_sub1.3}). 
\end{example}

Our purpose of this article is 
{\em to establish an approximation method for a Lipschitz map via diffeomorphisms using the notion of non-smooth analysis 
and to apply this method to prove differentiable sphere theorems}. 
That is, our main theorem is as follows: 

\begin{theorem}\label{2015_07_30_theorem2}
{\rm (Main Theorem)} 
Let $F: M \lra N$ be a Lipschitz map from a compact Riemannian manifold 
$M$ into a Riemannian manifold $N$, where $\dim M \le \dim N$. 
If $F$ has no singular points on $M$ (in the sense of Clarke), 
then for any $\eta>0$, there exists a smooth immersion 
$f_\eta$ from $M$ into $N$ such that   
\[
\max_{x\in M}d_N(F(x), f_\eta(x))<\eta, \quad 
\L(f_\eta) \le \L(F)(1+\eta).
\]
Here, $\L(f_\eta)$ and $\L (F)$ denote the Lipschitz constants 
of $f_\eta$ and $F$, respectively, i.e.,  
\[
\L(f_\eta):=\sup\left\{\frac{d_N(f_\eta(x),f_\eta(y))}{d_M(x,y)} \, \bigg| \,x,y\in M, x\ne y\right\}
\]
and 
\[
\L(F):=\sup\left\{\frac{d_N(F(x),F(y))}{d_M(x,y)} \, \bigg| \,x,y\in M, x\ne y\right\},
\]
where $d_M$ and $d_N$ are the distance functions of $M$ and $N$, respectively.
\end{theorem}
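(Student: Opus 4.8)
The plan is to obtain $f_\eta$ by \emph{geodesic mollification} of $F$ at a small scale $\delta>0$, composed with the nearest-point retraction onto an isometric Euclidean model of the target, and then to read off the immersion property from the behaviour of Clarke's generalized differential under averaging. First I would embed $N$ isometrically into some $\R^K$ (Nash), fix a tubular neighbourhood $\mathcal W$ of $N$ in $\R^K$ with its smooth nearest-point retraction $\pi\colon\mathcal W\to N$, and recall that $D\pi(p)$ is the orthogonal projection onto $T_pN$ for $p\in N$, so that $\|D\pi\|_{\mathrm{op}}\le 1+o(1)$ on a neighbourhood $\mathcal W'$ of $N$ shrinking onto $N$. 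Next, with $m=\dim M$, $0<\delta<\inj(M)$, and a nonnegative $C^\infty$ function $\kappa$ on $\R$ with $\supp\kappa\subset(-\infty,1)$ and $\kappa\not\equiv0$, I set
\[
\wt F_\delta(x):=\int_M F(y)\,K_\delta(x,y)\,d\vol(y),\qquad K_\delta(x,y):=c_\delta(x)^{-1}\,\delta^{-m}\kappa\!\big(d_M(x,y)^2/\delta^2\big),
\]
where $c_\delta$ is the factor normalizing $\int_M K_\delta(x,\cdot)\,d\vol=1$; since $d_M^2$ is smooth on $\{d_M<\inj(M)\}$ and $\delta<\inj(M)$, both $c_\delta$ and $\wt F_\delta$ (the latter viewed as a map $M\to\R^K$) are smooth. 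As $\|\wt F_\delta(x)-F(x)\|\le\delta\,\L(F)$, for $\delta$ small the image of $\wt F_\delta$ lies in $\mathcal W'$, so $f_\eta:=\pi\circ\wt F_\delta\colon M\to N$ is a well-defined smooth map; this will be the desired immersion.

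The $C^0$-bound is immediate: $d_N(F(x),f_\eta(x))\le C\|\wt F_\delta(x)-F(x)\|\le C\delta\,\L(F)<\eta$ once $\delta$ is small. For the differential, recall $DF$ exists $\vol$-almost everywhere by Rademacher; differentiating the smooth kernel, using $\int_M\nabla_xK_\delta(x,\cdot)\,d\vol=0$, and then integrating by parts in $y$---legitimate for the Lipschitz map $F$, and producing only an $O(\delta)$ relative error since the geodesic kernel is symmetric up to a curvature term, $\nabla_xK_\delta=-\nabla_yK_\delta+O(\delta)\,|\nabla K_\delta|$---one arrives at
\[
D\wt F_\delta(x)=\int_M \big(P_{y\to x}\circ DF(y)\big)\,K_\delta(x,y)\,d\vol(y)+\mathcal E_\delta(x),\qquad \|\mathcal E_\delta(x)\|\le C\delta\,\L(F),
\]
uniformly in $x$, where $P_{y\to x}\colon T_yM\to T_xM$ is parallel transport along the minimal geodesic. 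Thus $D\wt F_\delta(x)$ is, modulo $O(\delta)$, a probability average of the maps $DF(y)$ ($y\in B_\delta(x)$, where defined) pulled back to $T_xM$ by the near-isometries $P_{y\to x}$; in particular $\|D\wt F_\delta(x)\|_{\mathrm{op}}\le\L(F)(1+O(\delta))$, and since the embedding is isometric and $\|D\pi\|_{\mathrm{op}}\le1+o(1)$ near $N$ we get $\|Df_\eta(x)\|_{\mathrm{op}}\le\L(F)(1+O(\delta))$ for all $x$. Because $M$ is compact, hence a length space, and $f_\eta$ is smooth, $\L(f_\eta)=\max_{x\in M}\|Df_\eta(x)\|_{\mathrm{op}}$, which is $\le\L(F)(1+\eta)$ once $\delta$ is small enough.

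It remains to make $f_\eta$ an immersion, and here the hypothesis is used. By the displayed formula, $D\wt F_\delta(x)$ lies within $O(\delta)$ of the closed convex hull of $\{P_{y\to x}\circ DF(y):y\in B_\delta(x),\ DF(y)\ \text{exists}\}$. By the definition of the generalized differential $\partial F(x)$ (a compact convex subset of $\mathrm{Hom}(T_xM,T_{F(x)}N)$) together with its upper semicontinuity, and using the compactness of $M$ to make the estimate uniform, this convex hull lies within $\epsilon$ of $\partial F(x)$ once $\delta$ is small (as $P_{y\to x}$ is $O(\delta)$-close to the identification used in defining $\partial F(x)$, and $\partial F(x)$ is already closed and convex); hence $D\wt F_\delta(x)$ lies within some $\epsilon(\delta)\to0$ of $\partial F(x)$, uniformly in $x$. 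Now the fact that $F$ has no singular points in the Clarke sense means exactly that every element of $\partial F(x)$ is injective for every $x$---equivalently, via the $C^1$ chain rule applied to the embedding $\iota\colon N\hookrightarrow\R^K$ (so that $\partial(\iota\circ F)(x)=d\iota(F(x))\circ\partial F(x)$), every element of the generalized differential of $F\colon M\to\R^K$ is injective. Since the total space $\{(x,L):x\in M,\ L\in\partial F(x)\}$ is compact, the smallest singular values of the elements of $\partial F(x)$ have a common lower bound $\sigma_0>0$; hence for $\delta$ with $\epsilon(\delta)<\sigma_0/2$ the map $D\wt F_\delta(x)$ is injective for all $x$. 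Finally, as $\wt F_\delta(x)$ is $O(\delta)$-close to $F(x)\in N$, the projection $D\pi(\wt F_\delta(x))$ is $O(\delta)$-close to the orthogonal projection onto $T_{F(x)}N$, while the image of $D\wt F_\delta(x)$ is $O(\epsilon(\delta))$-close to $T_{F(x)}N$ (because $\partial F(x)$ maps into $T_{F(x)}N$); consequently $Df_\eta(x)=D\pi(\wt F_\delta(x))\circ D\wt F_\delta(x)$ still has smallest singular value $\ge\sigma_0/2-O(\delta+\epsilon(\delta))>0$, uniformly in $x$, once $\delta$ is small. Thus $f_\eta$ is a smooth immersion with the two stated properties.

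The step I expect to be the real obstacle is the second displayed formula together with its uniformization: one has to check carefully that geodesic mollification realizes $D\wt F_\delta(x)$ as an average of the almost-everywhere differentials $DF(y)$ ($y$ near $x$) up to errors vanishing with $\delta$ \emph{uniformly over the compact $M$}---this rests on the near-symmetry $\nabla_xK_\delta\approx-\nabla_yK_\delta$ of the geodesic kernel, on integration by parts valid only for Lipschitz (not $C^1$) maps, and on the smooth dependence of $d_M^2$, the volume density and $c_\delta$ on the base point within the injectivity radius---and then one must combine the upper semicontinuity of $x\mapsto\partial F(x)$ with a compactness argument in the bundle of generalized differentials to obtain the uniform singular-value bound $\sigma_0$. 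The remaining ingredients---the $C^0$ estimate, the passage from the pointwise norm bound to the Lipschitz bound via $M$ being a length space, and the routine bookkeeping with the Nash embedding and its retraction---present no essential difficulty once these two points are in place.
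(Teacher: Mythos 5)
Your overall architecture is the same as the paper's: mollify $F$, view it in an isometrically embedded copy of $N\subset\R^m$, compose with the nearest-point retraction, and extract the immersion property from Clarke non-singularity; your $C^0$ and Lipschitz estimates, and the representation of $D\wt F_\delta(x)$ as a probability average of the a.e.\ differentials $DF(y)\circ P_{x\to y}$ up to an $O(\delta)\L(F)$ error, can indeed be pushed through (after subtracting $F(x)$ using $\int_M\nabla_xK_\delta(x,\cdot)\,d\vol=0$ to control the normalization and divergence terms), and they play the role of the paper's Lemmas \ref{lemB3}--\ref{lemB6}. The genuine difference is only the smoothing scheme (one intrinsic geodesic kernel versus chart-by-chart mollification glued by a partition of unity) and the way non-singularity is exploited.

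It is in that last step that there is a real gap. The claim that $\Conv\{DF(y)\circ P_{x\to y}:y\in B_\delta(x),\ DF(y)\ \text{exists}\}$ lies within $\epsilon$ of $\partial F(x)$ ``once $\delta$ is small, uniformly in $x$'' is false: the Clarke differential is only upper semicontinuous, and no compactness argument upgrades its pointwise modulus to a uniform one, because $\partial F(x)$ at a nearby smooth point can be much smaller than the differentials accumulating at a neighbouring kink. Concretely, take $F:\R\to\R^2$, $F(t)=(t,|t|)$, which has no singular points ($\partial F(0)=\{(1,s):|s|\le1\}$ consists of injective maps); for $x=\delta/2$ the convex hull above contains every $(1,s)$ with $|s|\le1$, while $\partial F(x)=\{(1,1)\}$, so the distance does not tend to $0$ uniformly as $\delta\to0$. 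Hence your $\epsilon(\delta)\to0$ and the derived bound $\sigma_{\min}(D\wt F_\delta(x))\ge\sigma_0-\epsilon(\delta)$ do not follow as written. The conclusion you want (a uniform positive lower bound making $f_\eta$ an immersion) is true, but it needs a different argument: either argue by contradiction along $\delta_i\to0$, $x_i\to x$, noting that all differentials being averaged are then taken at points converging to the \emph{limit} point $x$, so they eventually lie in the convex set $\partial F(x)_\epsilon$, whose elements are uniformly injective; or follow the paper, which uses Clarke's lemma (Example \ref{2015_07_25_Exa_3}, implemented in Lemmas \ref{lemB7}--\ref{lemB11}): at each non-singular $p$ and for each direction $u$ there are a single unit vector $v$ and $\delta(p)>0$ with $\langle Au,v\rangle\ge\delta(p)$ for \emph{all} $A\in\partial F(q)$ and \emph{all} $q$ near $p$. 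That bound is linear in $A$, hence survives exactly the convex averaging that mollification performs even when differentials at different nearby base points are mixed, and a finite cover of the compact $M$ makes it global. With that repair (and the cosmetic fix that the transport should act on the source side, $DF(y)\circ P_{x\to y}$), your proof goes through.
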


\begin{remark}\label{2016_03_11_rem}
We give three remarks on Theorem \ref{2015_07_30_theorem2}. 
\begin{itemize} 
\item 
The definition of a {\it singular point} for a Lipschitz map, 
i.e., non-smooth analysis of Clarke, 
is given in Sect\,\ref{2016_01_24_sub1.3}.
\item
It should be emphasized that the reason why Clarke introduced 
the notion of non-smooth analysis was not for approximations 
of a Lipschitz map via diffeomorphisms, 
but for the inverse function theorem for a Lipschitz map, which contains the classical one as a special case. See also Example \ref{2015_07_25_Exa_3} below. 
\item Our approximation method for Lipschitz maps 
to prove Theorem \ref{2015_07_30_theorem2} generalizes 
the whole Grove-Shiohama one in \cite{GS}. See Sect.\,\ref{sect2}.
\end{itemize}
\end{remark}

As an indirect corollary of Theorem \ref{2015_07_30_theorem2}, 
for any twisted sphere $\Sigma^n$ of any dimension $n$, 
we can construct a concrete bi-Lipschitz 
homeomorphism $F$ between $\Sigma^n$ and $S^n$ 
which is a diffeomorphism except for a single point 
(See Corollary \ref{2016_01_10_cor3}). 
Therefore, by this result together with 
Theorem \ref{2015_07_27_weinstein}, the existence of exotic $n$-spheres ($n>4$) implies 
that we cannot approximate $F$ by diffeomorphisms. 
Hence, we must give careful consideration to sufficient conditions 
for a pair of topological spheres admitting a single cut point 
of some point, hence even for that of exotic ones, to be diffeomorphic.\par 
As applications of Theorem \ref{2015_07_30_theorem2}, 
we prove differentiable sphere theorems not only for such a pair above, 
but for a pair of exotic $n$-spheres ($n>4$), where we give the 
sufficient conditions for each pair to be diffeomorphic. See 
Sect.\,\ref{2016_01_11_new}.

\subsection{Non-smooth analysis and a corollary 
of Theorem \ref{2015_07_30_theorem2}}\label{2016_01_24_sub1.3}

F.H.\,Clarke very first established the non-smooth analysis 
in \cite{C1} and \cite{C2}. 
It is a strong tool not only in the optimal control theory (cf.\,\cite{C3}), 
but also in differential geometry (e.g.,\,
\cite{JMS}, \cite{Ri}, and Example \ref{2016_03_28_exa_1}): 
Let $M$, $N$ be Riemannian manifolds, 
and let $F: M\lra N$ be a Lipschitz map. 
In the case of $N=\R$, let $f:=F:M\lra\R$. 
By Rademacher's theorem \cite{R}, there exists a set $E_F\subset M$ of measure zero such that the differential $dF$ of $F$ exists 
on $M\setminus E_F$. Then, for each point $x$, 
there exists a sequence $\{x_i\}$ of $x_i \in M\setminus E_F$ 
convergent to $x$, and hence we can define 
the {\em generalized differential $\partial F(x)$ of $F$ at $x\in M$} 
as follows:
\begin{equation}\label{2015_08_02_Clarke_1}
\partial F (x):= \Conv (\{\lim_{i \to \infty}
dF_{x_i}\,|\,dF_{x_i} \ \text{exists as} \ x_i\in M\setminus E_F \lra x\}),
\end{equation}
where ``$\Conv (\,\cdot\,)$" means ``convex hull''. 
In the case of $N = \R$, we call $\partial f(x)$ 
the {\em generalized gradient of $f$ at $x$}, i.e., 
\[
\partial f (x)
:= \Conv (\{\lim_{i\to\infty} \nabla f (x_i)\,|\, \nabla f (x_i) \ \text{exists as} \ 
x_i\in M\setminus E_f \lra x\}).
\]
Here $\nabla f$ denotes the gradient vector field of $f$. 
Note that both definitions above do not depend 
on atlases (e.g.,\,\cite{JMS}), 
and that $\partial F (x)$, $\partial f (x)$ are 
compact convex sets. 

\begin{definition}{\rm (\cite{C1}, \cite{C2})}\label{2014_02_20_def_1} 
Let $M$, $N$ be the same as above, and 
$U \subset M$ an open set.
\begin{itemize}
\item A point $x \in U$ is said to be {\it non-singular for a Lipschitz map $F: U\lra N$}, if every element in $\partial F(x)$ is of maximal rank. 
\item A point $x\in U$ is said to be {\it non-critical for a Lipschitz function 
$f : U \lra \R$}, if 
\[
o \not\in \partial f (x),
\]
where $o$ denotes the zero tangent vector at $x$.
\end{itemize}
\end{definition}

We give several examples of Definition \ref{2014_02_20_def_1} 
with a few related remarks. 

\begin{example}\label{2015_07_25_Exa_1}
Consider two functions $f_1 (x):= x^2$, $f_2 (x):= x+2$ on $(-2,3)$. 
Define the Lipschitz function $f(x):= \max\{f_1 (x), f_2(x)\}$ on $(-2,3)$, i.e.,
\[
f(x) = 
\begin{cases}
\ x^2  \ &\text{on} \ (-2, -1)\cup(2,3),\\
\ x+2  \ &\text{on} \ [-1,2].
\end{cases}
\] 
Note that $f$ is not differentiable at $x = -1, 2$. Since 
\[
\lambda \frac{df_1}{dx}(-1) + (1-\lambda) \frac{df_2}{dx}(-1)= -3\lambda +1
\] 
for all $\lambda \in [0,1]$, we have $\partial f(-1) = [-2, 1]$. As well as above, 
we have $\partial f(2) = [1, 4]$. Since $0 \in \partial f(-1)$ and $0 \not\in \partial f(2)$, $x=-1$ is a critical point of $f$ and $x=2$ is not that of $f$. 
Note that $f(-1) = 1$ is the minimum value of $f$.
\end{example}

\begin{example}(\cite[Remark 1]{C2})\label{2015_07_25_Exa_2} 
Let $F:\R^2\lra\R^2$ be the Lipschitz map defined 
by $F(x, y):= (|x|+y, 2x +|y|)$. 
Note that $F$ is not differentiable at $(x,y)=(0,0)$. Then, 
\begin{align*}
\partial F(0, 0)
&=\Conv\left(\left\{ 
\left(
\begin{array}{cc}
1 & 1\\
2 & 1
\end{array}
\right),
\left(
\begin{array}{cc}
1 & 1\\
2 & -1
\end{array}
\right),
\left(
\begin{array}{cc}
-1 & 1\\
2 & 1
\end{array}
\right),
\left(
\begin{array}{cc}
-1 & 1\\
2 & -1
\end{array}
\right)
\right\}\right)\\[2mm]
&= \left\{ 
\left(
\begin{array}{cc}
s & 1\\
2 & t
\end{array}
\right)\,\bigg|\ |s| \le 1, \ |t|\le1
\right\}.
\end{align*}
Thus, $(0,0)$ is non-singular for $F$.
\end{example}

\begin{example}(\cite[Lemmas 3,\,4]{C2})\label{2015_07_25_Exa_3} 
Let $F:\R^n\lra\R^k$ 
be a map that is Lipschitz near some point $p \in\R^n$, where $n\le k$. 
Assume that $p$ is non-singular for $F$. Then for any $u\in S^{n-1}(1)$, 
$\partial F(p)u:=\{Au\,|\,A \in \partial F(p)\}$ is convex in $\R^k$ and 
$o\not\in\partial F(p)u$. Hence, there exist a vector $v \in S^{k-1}(1)$ and 
a number $\delta (p)>0$ such that 
\[
\langle Au, v \rangle \ge 2\delta (p)
\]
for all $A \in \partial F(p)$. Moreover, by the definition of $\partial F(p)$, 
we may find a number $r(p)>0$ satisfying 
\[
\langle Au, v \rangle \ge \delta (p)
\]
for all $A \in \partial F(q)$ and all 
$q \in B_{r(p)}(p):=\{x \in \R^n\,|\, \|p-x\|< r(p)\}$. 
Furthermore, one can prove that if $q_1, q_2 \in \ol{B_{r(p)}(p)}:=\{x \in \R^n\,|\, \|p-x\|\le r(p)\}$, 
$F$ is $\delta (p)$-expanding near $p$, that means that 
\[
\|F(q_2)-F(q_1)\| \ge \delta (p)\|q_2-q_1\|.
\]
Using this inequality, Clarke proved  the existence of a local Lipschitzian inverse at a non-singular point of a Lipschitz map (See \cite[Theorem 1]{C2}).
\end{example}

\begin{example}\label{2016_03_25_exa_GS}
(Critical points of Grove-Shiohama \cite{GS})\label{2015_07_25_Exa_4} Let $M$ be a complete Riemannian manifold, 
$d$ the distance function of $M$, and 
fix $p\in M$. Assume that $q \in M$ is not a critical point for $p$ 
in the sense of Grove-Shiohama (Refer to Example \ref{2016_03_28_exa_1} for its definition). 
By definition, there exists $w \in T_q M \setminus \{o\}$ 
such that $\angle(w, \dot{\gamma}(0)) > \pi/2$ holds 
for all minimal geodesic segments 
$\gamma$ emanating from $q$ to $p$. 
Hence, $o \not\in \partial d_p (q)$, where 
$d_p(x):=d(p,x)$ for all $x \in M$.  
Note that a point $q\ne p$ is a critical point of $d_p$ if and only if 
$o \in \partial d_p (q)$. Since $\partial d_p(p)$ equals the unit 
closed ball centered at the origin of $T_pM$, 
$o \in \partial d_p (p)$. 
Note that, even if $o \not\in \partial d_p (q)$, 
it is possible to occur that $q$ is a cut point of $p$ in general. 
\end{example}

As a direct consequence 
of Theorem \ref{2015_07_30_theorem2}, we have 

\begin{corollary}\label{2015_07_30_theorem1}
Let $F$ be a bi-Lipschitz homeomorphism 
from a compact Riemannian manifold $M$ 
onto a Riemannian manifold $N$. 
If $F$ and $F^{-1}$ have no singular points 
on $M$ and $N$, respectively, 
then $M$ and $N$ are diffeomorphic.
\end{corollary}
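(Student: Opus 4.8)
The plan is to derive the corollary from the Main Theorem (Theorem~\ref{2015_07_30_theorem2}), applied to $F$ and to $F^{-1}$, and then to promote the smooth approximants it produces to an honest diffeomorphism by a covering-space argument. First I would record the easy normalizations. Since $F$ is a homeomorphism of the compact manifold $M$ onto $N$, the space $N$ is compact as well, and $F$ carries connected components homeomorphically onto connected components, so we may assume $M$ and $N$ connected. Topological invariance of dimension, applied to the homeomorphism $F$, forces $\dim M=\dim N=:n$; consequently any smooth immersion between $M$ and $N$ is a local diffeomorphism, because its differential is injective, hence bijective, at each point. Now $F\colon M\to N$ and $F^{-1}\colon N\to M$ are Lipschitz maps from a compact Riemannian manifold into a Riemannian manifold with no singular points in the Clarke sense, so Theorem~\ref{2015_07_30_theorem2} applies to each of them.

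Next I would fix small parameters $\eta,\epsilon>0$ and use Theorem~\ref{2015_07_30_theorem2} to obtain a smooth local diffeomorphism $f_\eta\colon M\to N$ with $\max_{x\in M}d_N(F(x),f_\eta(x))<\eta$ and a smooth local diffeomorphism $g_\epsilon\colon N\to M$ with $\max_{y\in N}d_M(F^{-1}(y),g_\epsilon(y))<\epsilon$. Then $h:=g_\epsilon\circ f_\eta\colon M\to M$ is again a smooth local diffeomorphism, and the triangle inequality together with the Lipschitz continuity of $F^{-1}$ gives, for every $x\in M$,
\begin{align*}
d_M\big(h(x),x\big)&\le d_M\big(g_\epsilon(f_\eta(x)),F^{-1}(f_\eta(x))\big)+d_M\big(F^{-1}(f_\eta(x)),F^{-1}(F(x))\big)\\
&<\epsilon+\L(F^{-1})\,\eta .
\end{align*}
Since $M$ is compact, $\inj(M)>0$; choosing $\eta$ and $\epsilon$ small enough that $\epsilon+\L(F^{-1})\,\eta<\inj(M)$, the map $h$ is homotopic to $\id_M$ through the geodesic homotopy $(x,t)\mapsto\exp_x\big(t\,\exp_x^{-1}(h(x))\big)$.

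Finally I would invoke the following standard fact: a smooth local diffeomorphism of a compact connected manifold onto itself that is homotopic to the identity is a diffeomorphism. Indeed, such a map is proper with finite discrete fibers, hence a finite-sheeted covering map (it is surjective because its image is open, closed, and nonempty, and the target is connected); being homotopic to $\id$, it induces an isomorphism on $\pi_1$, so its image on $\pi_1$ has index one, which forces the covering to be one-sheeted, i.e. bijective, hence a diffeomorphism. Applied to $h$, this shows $g_\epsilon\circ f_\eta$ is a diffeomorphism, so in particular $f_\eta$ is injective. An injective local diffeomorphism from the compact manifold $M$ has image open (local diffeomorphisms are open maps) and closed (compactness) in the connected manifold $N$, hence equal to $N$; thus $f_\eta$ is a bijective local diffeomorphism, i.e. a diffeomorphism of $M$ onto $N$, and $M$ and $N$ are diffeomorphic.

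The one step that needs real care is this last promotion of the $C^0$-close approximants to an actual diffeomorphism; everything else is either bookkeeping or a direct citation of Theorem~\ref{2015_07_30_theorem2}. Compactness is exactly what makes the promotion work: it supplies a uniform injectivity radius, so that maps $C^0$-close to the identity are homotopic to it, and it makes the local diffeomorphisms proper, hence covering maps. Note that only the $C^0$-estimate and the immersion conclusion of the Main Theorem are needed here; the Lipschitz-constant bound plays no role in this corollary.
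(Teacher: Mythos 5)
Your proposal is correct and follows essentially the same route as the paper: apply Theorem \ref{2015_07_30_theorem2} to $F$ and $F^{-1}$, bound $d_M(g\circ f(x),x)$ by the triangle inequality and $\L(F^{-1})$, and conclude via the fact that a local diffeomorphism of a compact manifold $C^0$-close to the identity is a diffeomorphism (the paper isolates this as Lemma \ref{2014_02_05_lem1}, proved by the same geodesic homotopy and covering-space argument you give inline). Your added remarks on $\dim M=\dim N$ and on promoting the injective local diffeomorphism $f_\eta$ to a surjection are just explicit versions of steps the paper leaves implicit.
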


\begin{remark} Shikata would be the first researcher who approximated 
a bi-Lipschitz homeomorphism via diffeomorphisms: 
In \cite{Sh1}, he introduced a distance for a pair 
of compact differentiable manifolds which are bi-Lipschitz homeomorphic 
and proved that if the distance between such a pair is 
smaller than a certain positive constant, 
then the bi-Lipschitz map can be approximated via diffeomorphisms, 
i.e., the manifolds are diffeomorphic. 
Moreover, as an application to the differentiable pinching problem, 
he proved in \cite{Sh2} that there exists a certain constant 
$\delta(n)\in(1/4,1)$ depending on a number $n$ such that 
if sectional curvature of a simply connected, 
compact Riemannian manifold $M$ 
of dimension $n$ is $\delta(n)$-pinched, 
then $M$ is diffeomorphic to the standard sphere. 
What astonishes us is that he defined such a distance 
between two manifolds for getting the differentiable sphere theorem more than ten years before Gromov's Hausdorff distance in \cite{G2}.
\end{remark}

\subsection{Applications of Theorem \ref{2015_07_30_theorem2}: 
Three differentiable sphere theorems}\label{2016_01_11_new}

Now, we are going to state applications 
of Theorem \ref{2015_07_30_theorem2}: 
Taking Theorem \ref{2015_07_27_weinstein} 
into account, we consider the following setting. 
Let $M_i$ ($i=1,2$) be a compact manifold 
of dimension $n$ admitting a point $p_i \in M_i$ 
with a single cut point $q_i \in M_i$, 
and let $d_{M_i}(p_i,q_i)= \ell_i$, 
where $d_{M_i}$ denotes the distance function of $M_i$. 
Note that $M_i$ is homeomorphic to $S^n$. 
Choose a linear isometry $I: T_{p_1}M_1\lra T_{p_2}M_2$, 
where $T_{p_i}M_i$ denotes the tangent space of $M_i$ at $p_i$. 
For each $i=1,2$, let $\sigma^{p_i}_{q_i}$ be the diffeomorphism from 
$\Sph^{n-1}_{p_i}
:=\{v \in T_{p_i}M_i\,|\,\|v\|=1\}$ 
onto $\Sph^{n-1}_{q_i}$ defined by 
\begin{equation}\label{2015_07_30_def_of_subsigma}
\sigma^{p_i}_{q_i} (u_i):= -\dot{\tau}_{u_i} (\ell_i),
\end{equation}
where $\tau_{u_i}(t):= \exp_{p_i}tu_i$ 
for all $u_i \in \Sph^{n-1}_{p_i}$ and all $t \in [0,\ell_i]$. 
Thus, we have the diffeomorphism 
\begin{equation}\label{2015_07_27_def_of_sigma}
\sigma: \Sph^{n-1}_{q_1}\lra \Sph^{n-1}_{q_2}
\end{equation}
defined by 
$\sigma
:= \sigma^{p_2}_{q_2} \circ I \circ \sigma^{q_1}_{p_1}$, 
where $\sigma^{q_1}_{p_1}:= (\sigma_{q_1}^{p_1})^{-1}$. 
The bi-Lipschitz constant $\bL(\sigma)$ of $\sigma$ is given by \begin{equation}\label{2015_08_26_MA_3}
\bL(\sigma):=\inf\{\ell\,|\,\ell^{-1}\|u-v\| \le \|\sigma (u)-\sigma(v)\|
\le \ell\|u-v\| \ {\rm for \ all} \ u,v \in \Sph^{n-1}_{q_1}\}. 
\end{equation}
For a geodesic segment 
$\gamma:[0,\pi]\lra\Sph^{n-1}_{q_1}$ 
with $\|\dot{\gamma}\|:=\|d\gamma/dt\|\equiv 1$, 
let $c:[0,\pi]\lra\Sph^{n-1}_{q_2}$ be the curve defined by 
\begin{equation}\label{2015_09_30_def_of_semi-geodesic}
c:= \sigma\circ \gamma.
\end{equation}
Then, we find that the relation between the differentiable 
structures of $M_1$ and $M_2$ depends 
on the map $\sigma$ above, i.e., 
the first application of Theorem \ref{2015_07_30_theorem2} 
is stated as follows: 

\begin{TA}  
If $\bL(\sigma)$ satisfies 
\begin{equation}\label{2015_07_25_MA_1}
\|\ddot{c}\|^2 - 
2\bL (\sigma)^{-2} 
\le 2\left\{ \frac{\sqrt{2}-1}{2(e^\pi-1)} \right\}^2 -1
\end{equation}
for all geodesic segments 
$\gamma ([0,\pi])\subset \Sph_{q_1}^{n-1}$ 
with $\|\dot{\gamma}\|\equiv 1$ where 
$c:= \sigma\circ \gamma$ and $\|\ddot{c}\|:=\|d^2c/dt^2\|$, 
or if the $\sigma$ and $n$ satisfy  
\begin{equation}\label{2015_08_12_MA_2}
\bL(\sigma)^2 \le 1+ \left\{ \frac{8}{\pi}(n-1)\right\}^{-\frac{1}{2}}, 
\end{equation}
then $M_1$ and $M_2$ are diffeomorphic.
\end{TA}

\begin{remark}\label{2016_01_22_rem1}
We give two remarks on the differentiable 
exotic sphere theorem I and the outlines of the proof 
of it in order to show Theorem \ref{2015_07_30_theorem2} 
to be useful: 

\begin{itemize}
\item The assumption \eqref{2015_07_25_MA_1} does not 
depend on the dimension. Moreover, it 
implies that $\sigma$ is almost an isometry, 
and that $c$ is almost a geodesic segment 
on $\Sph^{n-1}_{q_2}$. 
The inevitability of the restriction of $\ddot c$ is 
discussed in Sect.\,\ref{sect2_new4}. 

\item The right side of \eqref{2015_08_12_MA_2} 
is the same constant as Karcher \cite{K} estimated 
to get a sharper version of Shikata's theorem in \cite{Sh1}.

\item The outline of the proof in the case 
where \eqref{2015_07_25_MA_1} holds: 
Let $d_{M_i}(p_i,q_i):= \pi$ ($i=1,2$) by normalizing the metric. 
For each $(t, u_1) \in [0,\pi] \times \Sph^{n-1}_{p_1}$, 
we define the bi-Lipschitz homeomorphism 
$F: M_1\lra M_2$ by 
\begin{equation}\label{2016_01_20_map}
F(\exp_{p_1}tu_1):=\exp_{p_2}(t I(u_1)).
\end{equation}
Note that $F$ is a diffeomorphism 
between $M_1\setminus \{q_1\}$ 
and $M_2\setminus \{q_2\}$. 
Thanks to \eqref{2015_07_25_MA_1}, 
we see that $q_1$ is a non-singular point of $F$, 
and hence $F$ has no singular points on $M_1$. 
By Theorem \ref{2015_07_30_theorem2}, for a sufficiently small $\eta>0$, we can approximate $F$ via smooth immersions 
$f_\eta: M_1\lra M_2$. By a topological argument, 
we see that the local diffeomorphism $f_\eta$ is a bijection. Therefore, 
$M_1$ and $M_2$ are diffeomorphic. 

\item The outline of the proof in the case 
where \eqref{2015_08_12_MA_2} holds: 
Let $F: M_1\lra M_2$ denote the map defined by \eqref{2016_01_20_map}. And we embed $M_2$ 
into $\R^m$ isometrically, where $m\ge n+1$. 
By \eqref{2015_08_12_MA_2} and 
\cite[Theorem 5.1]{K}, we can find $\delta >0$ such that 
a locally smooth approximation 
$F_\ve^{(q_1)}|_{B_\delta (q_1)}$ of 
$F$ is an immersion into $\R^m$ for all $\ve \in (0,\delta)$, 
where $B_\delta (q_1):=\{x \in M_1\,|\,d_{M_1}(q_1, x)< \delta\}$. 
Then, we have the local diffeomorphism $F_\ve:M_1\lra\R^m$ 
defined by $F_\ve:=(1-\varphi)F +\varphi F_\ve^{(q_1)}$. 
Here, $\varphi$ denotes a smooth function on $M_1$ satisfying 
$0\le\varphi\le1$ on $M_1$, $\varphi\equiv1$ on $\ol{B_r(q_1)}$, 
and $\supp \varphi \subset B_R(q_1)$, where $0<r<R<\delta$. 
Choose a sufficiently small open neighborhood $U$ of 
$M_2$ in $\R^m$ so that the smooth (locally) 
distance projection $\pi:U\lra M_2$ is well-defined. 
Since $F_\ve(M_1)\subset U$ for any sufficiently small $\ve>0$, 
the map $f_\ve:= \pi\circ F_\ve :M_1\lra M_2$ 
is a local diffeomorphism. 
Since $f_\ve$ is bijective, $M_1$ and $M_2$ are diffeomorphic. 
The fundamental course in the proof 
of Theorem \ref{2015_07_30_theorem2} 
has a similar construction of $f_\ve$, where we use the partition 
of unity.  
\end{itemize}
\end{remark}

Moreover, let $\ol{c}:[0,\pi]\lra T_{q_2}M_2$ 
be the smooth curve defined by 
\begin{equation}\label{2016_03_23_geodesic_new}
\ol{c}(t):= c(0)\cos t + \dot{c}(0)\sin t,
\end{equation}
where $c$ is the curve in $\Sph^{n-1}_{q_2}$ 
defined by \eqref{2015_09_30_def_of_semi-geodesic} 
and we set $\dot{c}(t):=dc/dt$. Note that $\|\dot{c}(0)\|\not=1$ 
is possible. Then, we can replace \eqref{2015_07_25_MA_1} with 
the following condition \eqref{2016_03_23_TAII_as}: 

\begin{TAII} If $\sigma$ satisfies 
\begin{equation}\label{2016_03_23_TAII_as}
\angle(\ol{c}(t), c(t)) < \frac{\pi}{2}
\end{equation}
for all geodesic segments 
$\gamma ([0,\pi])\subset \Sph^{n-1}_{q_1}$ 
with $\|\dot{\gamma}\|\equiv 1$, 
then $M_1$ and $M_2$ are diffeomorphic.
\end{TAII}

\begin{remark}
We find \eqref{2016_03_23_TAII_as} in the process for proving 
lemmas that we need for the proof of the differentiable 
exotic sphere theorem I in the case 
where \eqref{2015_07_25_MA_1} holds. 
By \eqref{2016_03_23_TAII_as}, 
$q_1$ is non-singular for the map 
$F$ defined by \eqref{2016_01_20_map}, 
and hence $F$ has no singular points on $M_1$. 
By Theorem \ref{2015_07_30_theorem2}, we get the assertion. 
\end{remark}

Next, we will state the third application of Theorem \ref{2015_07_30_theorem2} and its corollaries: 
Let $M$ be a compact Riemannian manifold of dimension $n$. 
For any two distinct points $p,q\in M$, we set 
\[
D(p):=\{x \in M\,|\, d(p, x)< d(q,x)\}, \quad 
D(q):=\{x \in M\,|\, d(p, x) > d(q,x)\}, 
\]and
\[
E_{p,\,q}:=\{x \in M\,|\, d(p, x)= d(q,x)\},
\]
where $d$ denotes the distance function of $M$. 
Moreover, for a point $x \in M$, we set 
\[
\Crit(x):=\{y \in M\,|\,o \in \partial d_x(y)\}.
\] 
With these notations, we have 

\begin{TB}\label{2015_07_29_theorem}
Take any two distinct points $p,q \in M$. 
If \begin{equation}\label{2015_07_29_theorem_1}
D(p) \cap \Crit(p)= \{p\}, \quad D(q) \cap \Crit(q) =\{q\},
\end{equation}
and if for any geodesic segments $\alpha$ and $\beta$ emanating 
from each point $x \in E_{p,\,q}$ to $p$ and $q$, respectively, 
\begin{equation}\label{2015_07_29_theorem_2}
\angle (\dot{\alpha} (0), \dot{\beta}(0)) > \frac{\pi}{2}
\end{equation}
holds at $x$, then 
\begin{enumerate}[{\rm (T-1)}] 
\item $M$ is a twisted sphere, and 
\item there exists a bi-Lipschitz homeomorphism 
between $M$ and $S^n(1)$ 
which is a diffeomorphism except for the point $q$, where 
$S^n(1):=\{v \in \R^{n+1}\,|\,\|v\|=1\}$. 
\end{enumerate}
Furthermore, we have that 
\begin{enumerate}[{\rm (T-3)}] 
\item 
there exists a diffeomorphism 
$\sigma_q^p :\Sph^{n-1}_p\lra \Sph^{n-1}_q$ 
defined similarly to \eqref{2015_07_30_def_of_subsigma} 
such that if the following condition {\rm (a)} or {\rm (b)} is satisfied, 
then $M$ and $S^n(1)$ are diffeomorphic: 
\begin{enumerate}[{\rm (a)}] 
\item 
$\bL((\sigma_q^p)^{-1})$ satisfies \eqref{2015_07_25_MA_1} 
for all geodesic segments $\gamma ([0,\pi])\subset \Sph^{n-1}_{q}$ 
with $\|\dot{\gamma}\|\equiv 1$, where the curve 
$c:[0,\pi]\lra \Sph^{n-1}_S$ is given by 
$c:= \sigma^{N}_{S} \circ I \circ (\sigma_q^p)^{-1} \circ \gamma$, 
and we put $N:=(0,0,\ldots,1), S:= (0,0,\ldots,-1)\in S^n(1)$, 
or if $(\sigma_q^p)^{-1}$ and $n$ satisfy \eqref{2015_08_12_MA_2}; 
\item 
$(\sigma_q^p)^{-1}$ satisfies \eqref{2016_03_23_TAII_as}. 
\end{enumerate}
\end{enumerate}
Note that the curve 
$\ol{c}:[0,\pi]\lra T_S S^n(1)$ 
in \eqref{2016_03_23_TAII_as} is defined by 
\eqref{2016_03_23_geodesic_new} for the 
$c= \sigma^{N}_{S} \circ I \circ (\sigma_q^p)^{-1} \circ \gamma$.
\end{TB}

\begin{remark}\label{2015_03_31_rem1.6} 
We give two remarks on the differentiable twisted 
sphere theorem and several related topics for it: 
\begin{itemize}
\item Since $M$ is twisted by (T-1), 
$M$ admits a metric such that the cut locus of some point in $M$ is a single point by Weinstein's deformation technique for metrics. 
See Remark \ref{2015_08_08_rem} for details. 

\item The diffeomorphism $\sigma_q^p$ in (T-3) induces a boundary 
diffeomorphism $h_{\sigma_q^p}:S^{n-1}\lra S^{n-1}$ such that 
$M=D^n\cup_{h_{\sigma_q^p}} D^n$, 
where $D^n$ denotes the standard $n$-disc and $S^{n-1}= \partial D^n$ 
(See Sect.\,\ref{sect4}). 
Thus, by taking the contrapositive of (T-3), 
we could define clearly a boundary diffeomorphism 
to get an exotic sphere $\Sigma^n$ ($n>4$) from a twisted one. The fact that there are few explicit examples of such maps 
is worthy of note. E.g., Dur\'an's boundary diffeomorphism 
for an $\Sigma^7$ in \cite{D}.

\item Donaldson and Sullivan \cite{DS} proved that there are smooth $4$-manifolds which are homeomorphic, but not bi-Lipschitz.

\item Let $M$ be a piecewise linear (PL) 
$n$-manifold\footnote{A PL $n$-manifold is, 
by definition, a polyhedron admitting a linear 
triangulation that satisfies that the link of each vertex 
is combinatorially equivalent to the boundary of the 
$n$-simplex.} of dimension $n\ge 5$ 
which has the homotopy type of $S^n$. 
Then, the Stallings-Zeeman theorem 
(\cite{St} together with \cite{Z}) says 
that $M\setminus\{\text{point}\}$ is 
PL-homeomorphic to $\R^n$.

\item For any closed Riemannian manifold $M$ of dimension $n\ge2$, 
Cheeger and Colding \cite{CC} found a positive number 
$\delta (n)$ depending on $n$ such that if $\Ric_M\ge n-1$ and 
$\vol (M) > \vol (S^n(1)) - \delta (n)$, then $M$ is diffeomorphic 
to $S^n (1)$, where $\Ric_M$ denotes Ricci curvature of $M$, 
and $\vol (M)$ denotes the volume of $M$. Note that their 
result is a generalization of the pioneering work 
of Otsu-Shiohama-Yamaguchi \cite{OSY} 
in the sectional curvature case. 
\end{itemize}
\end{remark}

Next, we will state three corollaries of the differentiable twisted sphere theorem: Applying the same argument in the proof of (T-2) to any 
twisted sphere, we have 

\begin{corollary}\label{2016_01_10_cor3} 
For any twisted sphere $\Sigma^n$ of general dimension $n$, 
there exists a bi-Lipschitz homeomorphism between $\Sigma^n$ and $S^n(1)$ which is a diffeomorphism except for a single point. 
In particular, if $n>4$, then there exists such a map between each exotic sphere $\Sigma^n$ and $S^n(1)$.
\end{corollary}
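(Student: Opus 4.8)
The plan is to place $\Sigma^n$ into the geometric situation treated in the proof of (T-2) and then transplant that construction. First I would recall, from the discussion in Section~\ref{2016_01_10_motiv} (Weinstein's deformation technique for metrics applied to two smoothly embedded discs in a twisted sphere, \cite{W}), that $\Sigma^n=D^n\cup_hD^n$ carries a Riemannian metric for which some point $p$ has a single cut point $q$; after rescaling I may assume $d(p,q)=\pi$. With respect to such a metric, $\exp_p$ maps $\ol{B_\pi(o)}\subset T_p\Sigma^n$ onto $\Sigma^n$, restricting to a diffeomorphism of $B_\pi(o)$ onto $\Sigma^n\setminus\{q\}$ and collapsing $\partial B_\pi(o)$ to $q$; moreover every geodesic from $p$ reaches $q$ exactly at time $\pi$, so the map $\sigma_q^p\colon\Sph^{n-1}_p\to\Sph^{n-1}_q$ of \eqref{2015_07_30_def_of_subsigma} is defined and is a diffeomorphism (it is a smooth bijection whose inverse is the analogous construction at $q$, which is smooth since conversely every geodesic from $q$ meets $p$ at time $\pi$). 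The unit sphere $S^n(1)$ has the same description with $N=(0,\dots,0,1)$, $S=(0,\dots,0,-1)$, $d(N,S)=\pi$.

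Next I would fix a linear isometry $I\colon T_p\Sigma^n\to T_NS^n(1)$ and define $F\colon\Sigma^n\to S^n(1)$ exactly as in \eqref{2016_01_20_map}, namely $F(\exp_p(tu)):=\exp_N(tI(u))$ for $(t,u)\in[0,\pi]\times\Sph^{n-1}_p$. This is well defined because at $t=0$ both sides equal $N$ and at $t=\pi$ both sides equal $S$, independently of $u$; and since $\exp_p$ is a diffeomorphism onto $\Sigma^n\setminus\{q\}$, $\exp_N$ a diffeomorphism onto $S^n(1)\setminus\{S\}$, and $I$ linear, $F=\exp_N\circ I\circ(\exp_p)^{-1}$ there, so $F$ is a diffeomorphism of $\Sigma^n\setminus\{q\}$ onto $S^n(1)\setminus\{S\}$. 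Continuity of $F$ at $q$, and of $F^{-1}=\exp_p\circ I^{-1}\circ(\exp_N)^{-1}$ at $S$, is immediate from the polar description, so $F$ is a continuous bijection from a compact space onto a Hausdorff one, hence a homeomorphism which is a diffeomorphism off $q$, with $F^{-1}$ smooth off $S$.

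The only real work — and the step I expect to be the main obstacle — is that $F$ and $F^{-1}$ are Lipschitz, which is precisely the estimate already carried out in the proof of (T-2). I would write the metric of $\Sigma^n$ in geodesic polar coordinates about $p$ as $dt^2+h_t$ on $(0,\pi)\times\Sph^{n-1}_p$ and that of $S^n(1)$ about $N$ as $dt^2+\sin^2t\,g_0$ with $g_0$ the round metric; in these coordinates $F$ is $(t,u)\mapsto(t,\bar I(u))$ with $\bar I\colon\Sph^{n-1}_p\to\Sph^{n-1}_N$ an isometry, so the claim reduces to a uniform two-sided comparison of $h_t$ with $\sin^2t\,\bar I^{*}g_0$ for $t\in(0,\pi)$. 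On each $[\ve,\pi-\ve]$ this is clear by compactness and positivity; as $t\to0$ both $h_t/t^2$ and $(\sin^2t/t^2)\bar I^{*}g_0$ tend to fixed non-degenerate metrics on $\Sph^{n-1}_p$ since $p$ is an ordinary point; and as $t\to\pi$, with $s:=\pi-t$, the identity $\exp_p(tu)=\exp_q(s\,\sigma_q^p(u))$ gives $h_{\pi-s}=(\sigma_q^p)^{*}\tilde h_s$ where $\tilde h_s$ is the polar metric at radius $s$ about the ordinary point $q$, so $h_{\pi-s}/s^2\to(\sigma_q^p)^{*}(\text{round metric on }\Sph^{n-1}_q)$, which is non-degenerate because $\sigma_q^p$ is a diffeomorphism, while $(\sin^2(\pi-s)/s^2)\bar I^{*}g_0\to\bar I^{*}g_0$. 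The two limits being fixed non-degenerate metrics on $\Sph^{n-1}_p$, the comparison is uniform up to $t=\pi$ as well. This limiting analysis at $q$ is exactly where the hypotheses of the differentiable twisted sphere theorem entered in the proof of (T-2); here it is furnished for free by the Weinstein metric, and everything else is formal. Hence $F$ is a bi-Lipschitz homeomorphism between $\Sigma^n$ and $S^n(1)$ that is a diffeomorphism except at $q$.

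To finish, for the last sentence of the corollary I would note that an exotic sphere $\Sigma^n$ with $n>4$ is a homotopy sphere, hence a twisted sphere by Smale's $h$-cobordism theorem (as recalled in Section~\ref{2016_01_10_motiv}), so the construction above applies to it; since $\Sigma^n$ is not diffeomorphic to $S^n(1)$, the resulting $F$ is necessarily non-smooth at $q$ (and, by Theorem~\ref{2015_07_30_theorem2}, admits no approximation by diffeomorphisms).
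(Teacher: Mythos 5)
Your proposal is correct, but it takes a genuinely different route from the paper. The paper proves this corollary by re-running the argument of (T-2) on the given disc decomposition of the twisted sphere: each disc is identified with a hemisphere of $S^n(1)$ via pulled-back metrics $g_\pm$, the resulting polar map $F(e(t,v))=\exp_N tI(v)$ is only Lipschitz across the gluing hypersurface, and the machinery of Sect.\,\ref{sect2} (the mollifier approximation, Lemma \ref{2014_07_29_lem4.2}, the projection $\pi_{S^n(1)}$ and the covering-map argument) is used to smooth it there, leaving $q$ as the only non-smooth point. You instead smooth the \emph{metric} rather than the map: invoking Weinstein's deformation (as in Sect.\,\ref{2016_01_10_motiv} and Remark \ref{2015_08_08_rem}) to get a metric on $\Sigma^n$ with $\Cut(p)=\{q\}$, you take the map \eqref{2016_01_20_map} of Sect.\,\ref{2016_01_11_new}, which is then automatically a diffeomorphism off $q$, so no smoothing step, no Lemma \ref{2014_07_29_lem4.2}, and indeed no use of Theorem \ref{2015_07_30_theorem2} is needed; the only work is your direct polar-coordinate comparison of $h_t$ with $\sin^2 t\,\bar I^*g_0$, which in effect supplies the proof of the bi-Lipschitz claim that the paper only asserts in Remark \ref{2016_01_22_rem1} for the map \eqref{2016_01_20_map}. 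What the paper's route buys is that it works with the original metric and the given boundary diffeomorphism $h$ (so the construction is the one later fed into (T-3)); what your route buys is a shorter, self-contained argument that makes the "indirect corollary" remark of Sect.\,\ref{2016_01_10_motiv} precise. Three small points you should still spell out, all routine: that every unit vector at $q$ is the incoming direction of a geodesic from $p$ (so $\sigma^p_q$ is onto, e.g.\ by invariance of domain, or by the symmetry of the Weinstein construction), which is what makes the inverse construction at $q$ globally defined; that the expansions $h_t=t^2(g_{\mathrm{round}}+O(t^2))$ and $h_{\pi-s}=(\sigma^p_q)^*\tilde h_s$ with $\tilde h_s=s^2(g_{\mathrm{round}}+O(s^2))$ hold uniformly over the compact spheres of directions; and that a continuous map with differential bounded off the two points $p,q$ is globally Lipschitz (join points by paths avoiding $p,q$ of length arbitrarily close to the distance).
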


\begin{remark}\label{2016_02_08_rem1} 
Let $D^n$ be the standard $n$-disc and 
$S^{n-1}= \partial D^n$. 
Combining with results of Munkres \cite{Mu}, 
Kervaire-Milnor \cite{KM}, and Cerf \cite{Ce}\footnote{An 
alternative proof of $\Gamma_4=0$ is found in Eliashberg's \cite{E}.} , 
we see that if $n \le6$, then the group 
$\Gamma_n:= {\rm Diff} (S^{n-1})/{\rm Diff} (D^n)$ 
is trivial. Here, ${\rm Diff} (X)$ denotes the 
topological group of orientation preserving 
diffeomorphisms of a smooth manifold $X$. 
Hence, every twisted $n$-sphere of dimension $n \le6$ 
is diffeomorphic to $S^n(1)$. 
\end{remark}

Since $\Gamma_4=0$ as in the above, there is no twisted sphere but the standard sphere. 
Hence, if an exotic $4$-sphere $\Sigma^4$ exists, then $\Sigma^4$ is not twisted. 
Thus, we have 

\begin{corollary}\label{2016_01_10_cor1}
If $\Sigma^4$ exists, then $\Sigma^4$ does not satisfy \eqref{2015_07_29_theorem_1}, or \eqref{2015_07_29_theorem_2}. 
\end{corollary}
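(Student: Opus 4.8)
The plan is to argue by contraposition, combining part (T-1) of the Differentiable Twisted Sphere Theorem with the vanishing of $\Gamma_4$ recalled in Remark \ref{2016_02_08_rem1}. Suppose an exotic $4$-sphere $\Sigma^4$ exists. First I would use that $\Gamma_4 := \mathrm{Diff}(S^3)/\mathrm{Diff}(D^4) = 0$: any twisted $4$-sphere is a manifold $D^4 \cup_h D^4$ obtained by gluing two standard $4$-discs along a boundary diffeomorphism $h$ of $S^3$, and since $h$ may then be assumed to extend to a diffeomorphism of $D^4$, such a manifold is diffeomorphic to $S^4(1)$. Hence no twisted $4$-sphere is exotic, so $\Sigma^4$, which by definition is not diffeomorphic to $S^4(1)$, is not a twisted sphere.

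Next I would apply the contrapositive of (T-1). By Theorem B, if there exist two distinct points $p, q \in \Sigma^4$ at which both \eqref{2015_07_29_theorem_1} and \eqref{2015_07_29_theorem_2} hold, then $\Sigma^4$ is a twisted sphere. Were such a pair to exist, this would contradict the conclusion of the previous paragraph; hence no such pair exists, i.e.\ for every choice of distinct points $p, q \in \Sigma^4$ at least one of \eqref{2015_07_29_theorem_1}, \eqref{2015_07_29_theorem_2} fails. This is precisely the assertion that $\Sigma^4$ does not satisfy \eqref{2015_07_29_theorem_1}, or \eqref{2015_07_29_theorem_2}.

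I do not expect any genuine obstacle internal to this corollary: granting Theorem B (in particular (T-1)) and the identity $\Gamma_4 = 0$, the proof is a one-line contraposition. The only point deserving care is the reading of the conclusion — it asserts that \emph{no} admissible pair of distinct points makes both conditions hold simultaneously — which I would spell out explicitly to avoid ambiguity. The real mathematical content lies upstream, in the proof of (T-1) via the Main Theorem and in the cited results of Munkres, Kervaire--Milnor, and Cerf that give $\Gamma_4 = 0$.
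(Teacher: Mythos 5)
Your argument is correct and is exactly the paper's own: the paper deduces from $\Gamma_4=0$ that every twisted $4$-sphere is standard, so an exotic $\Sigma^4$ cannot be twisted, and then applies the contrapositive of (T-1) of the differentiable twisted sphere theorem. No gaps; your explicit spelling-out of the quantification over pairs $p,q$ is a harmless clarification of the same proof.
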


\begin{remark} So far many topological 
$4$-spheres regarded as exotic are indeed standard. E.g., Akbulut's \cite{A} and 
Gompf's \cite{G}. We refer to \cite{FGMW} about the difficulty to solve SPC$4$. We can find an interesting 
work on the relationship between 
Stein $4$-manifolds and SPC$4$ 
in Yasui's \cite{Y}. 
\end{remark}

Let $M$ be a Grove-Shiohama type $n$-sphere, 
and let $p, q\in M$ such that $d(p, q)= \diam (M)$, 
where $d$ denotes the distance function of $M$. 
Then, Toponogov's comparison theorem shows 
the following process: First $M \setminus B_{\pi/2} (p)$ 
is convex, where 
$B_{\pi/2} (p):=\{x \in M\,|\,d(p,x) < \pi/2\}$, 
second $q$ is a unique point, and finally $\Crit(p) =\{p, q\}$ (See \cite{GS}, or \cite{Kon}, \cite{KO} for details). 
By the same process above, we have $\Crit(q) =\{p, q\}$ since $K_M\ge1$ everywhere. Thus, $M$ satisfies \eqref{2015_07_29_theorem_1} and \eqref{2015_07_29_theorem_2}.
By the differentiable twisted sphere theorem, 
we have 

\begin{corollary}\label{2016_01_10_cor2}
Let $M$ be a Grove-Shiohama type $n$-sphere, 
and let $p, q\in M$ such that $d(p, q)= \diam (M)$. 
Then, there exists a bi-Lipschitz homeomorphism between $M$ and $S^n(1)$ which is a diffeomorphism except for $q$.
\end{corollary}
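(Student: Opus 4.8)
The plan is to verify the two hypotheses of the differentiable twisted sphere theorem (namely \eqref{2015_07_29_theorem_1} and \eqref{2015_07_29_theorem_2}) for a Grove-Shiohama type $n$-sphere $M$, with $p,q$ realizing the diameter, and then to invoke (T-2) directly. So the corollary is really a matter of checking that the critical-point structure of $d_p$ and $d_q$ is exactly $\{p,q\}$ and that geodesics from an equidistant point to $p$ and $q$ meet at an obtuse angle. The main tool is Toponogov's comparison theorem together with the Grove-Shiohama critical-point theory, as already sketched in the paragraph preceding the statement.

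First I would recall the Grove-Shiohama argument: since $K_M\ge 1$ and $\diam(M)=d(p,q)>\pi/2$, one shows by Toponogov that $M\setminus B_{\pi/2}(p)$ is totally convex, that $q$ is its unique point (a point at maximal distance from $p$), and that $\Crit(p)=\{p,q\}$ in the Grove-Shiohama sense. By Example \ref{2016_03_25_exa_GS}, a point $y\ne p$ lies in $\Crit(p)=\{y\mid o\in\partial d_p(y)\}$ if and only if $y$ is a Grove-Shiohama critical point for $p$; adding the point $p$ itself (where $o\in\partial d_p(p)$ since $\partial d_p(p)$ is the closed unit ball), we get exactly $\Crit(p)=\{p,q\}$ in the Clarke sense as well. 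Since $d(p,x)<d(q,x)$ forces $x$ to be strictly closer to $p$, hence $x\notin\{q\}$, and $p\in D(p)$, we obtain $D(p)\cap\Crit(p)=\{p\}$. Because $K_M\ge 1$ holds everywhere and $d(q,p)=\diam(M)$ as well, the identical argument with the roles of $p$ and $q$ interchanged yields $\Crit(q)=\{p,q\}$ and $D(q)\cap\Crit(q)=\{q\}$. This establishes \eqref{2015_07_29_theorem_1}.

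Next I would verify \eqref{2015_07_29_theorem_2}: let $x\in E_{p,q}$, so $d(p,x)=d(q,x)=:\rho$, and note $2\rho\ge d(p,q)=\diam(M)>\pi/2$, hence $\rho>\pi/4$; moreover each of $d(p,x),d(q,x)\le\diam(M)\le\pi$ (normalizing so the comparison sphere is $S^n(1)$). Let $\alpha,\beta$ be minimal geodesics from $x$ to $p$ and $q$ respectively, and set $\theta:=\angle(\dot\alpha(0),\dot\beta(0))$. Apply the Toponogov hinge comparison to the hinge at $x$ with side lengths $\rho,\rho$ and angle $\theta$: the opposite side has length $d(p,q)=\diam(M)$, and in the comparison sphere $S^n(1)$ the corresponding geodesic triangle with two sides $\rho$ and included angle $\theta$ has third side of length $\bar d$ satisfying $\cos\bar d=\cos^2\rho+\sin^2\rho\cos\theta$, with $d(p,q)\le\bar d$. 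Since $d(p,q)>\pi/2$ we need $\cos\bar d<0$, i.e. $\cos^2\rho+\sin^2\rho\cos\theta<0$, which forces $\cos\theta<0$, i.e. $\theta>\pi/2$. (One must also observe $\rho<\pi/2$ is not automatic, but if $\rho\ge\pi/2$ then $\cos^2\rho+\sin^2\rho\cos\theta\le\sin^2\rho\cos\theta$, and since $d(p,q)\le\diam(M)<\pi$ one still gets $\bar d<\pi$ hence $\theta<\pi$ and the strict inequality $d(p,q)\le\bar d$ combined with $d(p,q)>\pi/2$ again yields $\cos\bar d<0$ and thus $\cos\theta<0$.) In all cases $\angle(\dot\alpha(0),\dot\beta(0))>\pi/2$, which is \eqref{2015_07_29_theorem_2}.

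With both \eqref{2015_07_29_theorem_1} and \eqref{2015_07_29_theorem_2} in hand, the differentiable twisted sphere theorem applies. Its conclusion (T-2) gives a bi-Lipschitz homeomorphism between $M$ and $S^n(1)$ which is a diffeomorphism except for the point $q$, exactly as asserted. I expect the only delicate point to be the careful bookkeeping in the hinge comparison when $\rho$ is not known a priori to be less than $\pi/2$ — one must be sure the comparison triangle is nondegenerate and that the monotonicity/strictness in Toponogov's theorem is applied correctly — but this is routine spherical trigonometry and the Grove-Shiohama theory already handles the essentially identical estimates. Everything else is a direct translation between the Grove-Shiohama critical-point notion and the Clarke critical-point notion via Example \ref{2016_03_25_exa_GS}, plus citing (T-2). $\qedd$
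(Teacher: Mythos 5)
Your proposal is correct and takes essentially the same route as the paper, which likewise verifies \eqref{2015_07_29_theorem_1} and \eqref{2015_07_29_theorem_2} for a Grove--Shiohama sphere via Toponogov's comparison theorem and Grove--Shiohama critical point theory (citing \cite{GS}, \cite{Kon}, \cite{KO} rather than writing out the hinge computation) and then invokes (T-2) of the differentiable twisted sphere theorem. One cosmetic remark: your parenthetical for the case $\rho\ge\pi/2$ asserts the reversed inequality $\cos^2\rho+\sin^2\rho\cos\theta\le\sin^2\rho\cos\theta$, but that aside is superfluous anyway, since $\cos\bar{d}<0$ together with $\cos^2\rho\ge0$ already forces $\cos\theta<0$ for every $\rho\in(0,\pi)$.
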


\begin{remark}\label{2016_01_10_rem1}
We give a remark on Corollary \ref{2016_01_10_cor2} 
and two related results for it: 
\begin{itemize}
\item Since any Grove-Shiohama type $n$-sphere $M$ 
is twisted, $M$ of dimension $n \le6$ is diffeomorphic to $S^n(1)$. 
See  Remark \ref{2016_02_08_rem1} above. 

\item Grove and Wilhelm \cite{GW} proved that 
every Grove-Shiohama type $n$-sphere 
is diffeomorphic to $S^n (1)$ if the diffeomorphism 
stability question is ``yes" in their sense.

\item Petersen and Wilhelm \cite{PW} announced that 
the Gromoll-Meyer exotic sphere in \cite{GM} admits a metric with positive sectional curvature everywhere.
\end{itemize}
\end{remark}

%%%%%%%%%%%%%%%%%%%%%

\medskip

The organization of this article is as follows: 
Sect.\,\ref{sect2} has three subsections. 
In Sect.\,\ref{2016_03_12_sect2_sub1}, 
we study an approximation of a Lipschitz function 
on a Riemannian manifold $M$. 
Using non-smooth analysis, 
we prove, as a main lemma (Lemma \ref{lemN12}), 
that \eqref{2015_07_29_theorem_2} also holds 
for two gradient vector fields on a compact set in $M$ 
of smooth approximations of two distance functions 
from distinct points in $M$. In Sect.\,\ref{2015_08_26_Sect3.2}, we establish 
an approximation method for a Lipschitz map from a compact Riemannian
manifold into a Riemannian manifold using some techniques 
from non-smooth analysis and the partition of unity, 
and prove our main theorem, 
Theorem \ref{2015_07_30_theorem2}, 
applying the method. In Sect.\,\ref{sect3}, we prove Corollary \ref{2015_07_30_theorem1} 
applying Theorem \ref{2015_07_30_theorem2}. 
Sect.\,\ref{sect2_new} has four subsections. 
In the first two subsections \ref{sect2_new1} and \ref{sect2_new2}, 
we give preliminaries to proofs 
of the differentiable exotic sphere theorems I and II 
in assuming each of the conditions, i.e., 
\eqref{2015_07_25_MA_1}, \eqref{2015_08_12_MA_2}, and \eqref{2016_03_23_TAII_as}. In Sect.\,\ref{sect2_new3}, 
we prove the two theorems. 
In Sect.\,\ref{sect2_new4}, 
we discuss why we need the restriction of 
$\ddot{c}$ in \eqref{2015_07_25_MA_1} for the theorem I. 
In Sect.\,\ref{sect4}, we prove the differentiable twisted sphere theorem applying Lemma \ref{lemN12} and 
Theorem \ref{2015_07_30_theorem2}. 

\section{Approximations of Lipschitz maps via immersions}\label{sect2}%%%%%%%%%%%%%%%%%%%%%%%%%%%%%%%%%%%%%%%%%%%%%%%%%%%%%%%%%%%%%%%%%%%%%%%%%%%%%%%%%%%%%%%%%%%%%%%%%%%%%%%%%%%%%%%%%%%%%%%%%%%%%%%%%%%%%%%%%%%%%%%%%%%%%%%%%%%%%%%%%%%%%%%%%%%%%%%%

We prove here our main theorem, Theorem \ref{2015_07_30_theorem2}.  For the proof, we establish an approximation method for a Lipschitz map from a compact Riemannian manifold into a Riemannian manifold 
using non-smooth analysis. 
As was mentioned in Remark \ref{2016_03_11_rem}, 
this approximation method generalizes the whole method 
of Grove-Shiohama \cite{GS}. 
Note that we do not assume curvature assumptions at all, 
and that the smoothing technique bases on the partition of unity, 
while those in \cite{GS}, \cite{K} depend on the center of mass technique constructed by Grove and Karcher \cite{GK}. 

\subsection{Approximations of Lipschitz functions}\label{2016_03_12_sect2_sub1}

We first treat an approximation of a Lipschitz function $f$ 
on a Riemannian manifold $M$ of dimension $n$. 
As a main lemma (Lemma \ref{lemN12}) in this subsection, 
we prove that our approximation method keeps \eqref{2015_07_29_theorem_2} 
for two gradient vector fields 
$\nabla (d_p)_\ve, \nabla (d_q)_\ve$ on a compact set in $M$ 
of smooth approximations $(d_p)_\ve, (d_q)_\ve$ 
of two distance functions $d_p, d_q$ 
from distinct points $p,q\in M$, 
where $d$ denotes the distance function of $M$. 
This lemma will be applied 
to the proof of the differentiable twisted sphere theorem 
in Sect.\,\ref{sect4}.\par 
Take any point $p \in M$, 
and fix it. Since the exponential map $\exp_p$ on $T_pM$ is a diffeomorphism from $\B_r (o_p)$ onto $B_r(p)$ for a sufficiently small $r > 0$, we denote by $\exp_p^{-1}$ the inverse map of $\exp_p|_{\B_r (o_p)}$. Here we set 
\[
\B_r (o_p):=\{v \in T_pM\,|\, \|v\| < r\}, \quad 
B_r(p) :=\{q \in M \,|\, d(p, q) < r\},
\] 
where $o_p$ denotes the origin of $T_pM$. 
In what follows, we identify $T_pM$ 
with Euclidean $n$-dimensional space 
$(\R^n, \langle\,\cdot\,,\,\cdot\,\rangle)$. 
Then, we may define a smooth approximation 
of the $f$ around $p$: 

\begin{definition}\label{2014_05_23_def5.1}
For each $\ve >0$, let $f^{(p)}_\ve : B_r (p) \lra \R$ denote the function defined by  
\[
f^{(p)}_\ve (q) :=\int_{\R^n} f(q(y)) \rho_\ve (y) dy
= \int_{\R^n} f(\exp_p(y)) \rho_\ve (\exp_p^{-1}q-y) dy,
\]
where we set $q(y):= \exp_p (\exp_p^{-1}q -y)$, 
and the function $\rho_\ve$ denotes the mollifier. 
Refer to \cite{Sh1}, \cite{K}, or \cite{GS} for details of the mollifier.
\end{definition}

Choose a locally finite covering $\{B_{r_i}(p_i)\}$ 
of strongly convex balls of $M$ such that 
\[
M= \cup_i B_{r_i/2}(p_i).
\] 
Here, a subset $C \subset M$ is said to be {\em strongly convex} 
if for any two points $x,y$ in the closure $\ol{C}$ of $C$, 
there exists a unique minimal geodesic segment 
$\gamma :[0,1]\lra M$ emanating from $x= \gamma (0)$ 
to $y =\gamma (1)$ such that $\gamma ((0,1))\subset C$. Note that 
for each $x \in M$, there exists a strongly convex ball of its center $x$ 
(Whitehead's convexity theorem \cite{Wh}).\par 
Let $\{\psi_i\}_{i=1}^\infty$ be the partition 
of unity subordinate to $\{B_{r_i/2}(p_i)\}$. 
Define the smooth approximation $f_\ve$ of the $f$ on $M$ by
\[
f_\ve(q):=\sum_{i=1}^\infty\psi_i(q)f_\ve^{(p_i)}(q).
\]
By definition, 
\begin{equation}\label{eq:N1.0}
f^{(p_i)}_\ve(q)=\int_{\R^n}f(q_i(y))\rho_\ve(y)dy
\end{equation}
for $q\in B_{r_i}(p_i)$,
where
\begin{equation}\label{2014_07_29_eq:No2.2}
q_i(y):=\exp_{p_i}(\exp_{p_i}^{-1}q-y).
\end{equation}

\begin{lemma}\label{lemN5}
For each center $p_i$ of $B_{r_i} (p_i)$, 
\begin{equation}\label{2014_08_18_lemN5_1}
\sup_{q\in\supp\psi_i}|f^{(p_i)}_\ve(q)-f(q)| 
\le \ve \cdot \L (f) \cdot \L(\exp_{p_i}|_{ \B_{r_i}(o_{p_i})})
\end{equation}
holds for all $\ve\in(0,\ve_i)$, 
where $\ve_i:=r_i-\max\{ \| \exp_{p_i}^{-1}q \| \; | \: q\in\supp\psi_i\}$,
and 
$\L(f)$ and $\L(\exp_{p_i} |_{\B_{r _i} (o_{p_i})})$ denote the Lipschitz constants of 
$f$ and $\exp_p|_{\B_{r_i } (o_{p_i})}$, respectively, i.e.,  
\begin{equation}\label{2014_05_25_lem5.2_1}
\L(f)
:= \sup \left\{ 
\frac{|f(q_1) -f(q_2)|}{d(q_1, q_2)}\,\bigg| \,q_1, q_2 \in M, \ q_1\not=q_2
\right\}
\end{equation}
and 
\[
\L(\exp_{p_i} |_{\B_{r _i} (o_{p_i})}):= \sup \left\{\frac{d(\exp_{p_i} v, \exp_{p_i} w)}{\|v-w\|}\,\bigg|\,
v, w \in \B_{r_i} (o_{p_i}), \ v\not=w \right\}.
\]
\end{lemma}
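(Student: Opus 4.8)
The plan is to estimate $|f^{(p_i)}_\ve(q) - f(q)|$ directly from the integral representation \eqref{eq:N1.0}, using only that $f$ is Lipschitz, that $\exp_{p_i}$ restricted to the relevant ball is Lipschitz, and that the mollifier $\rho_\ve$ is supported in the $\ve$-ball and integrates to $1$. The key preliminary observation is that the formula \eqref{eq:N1.0} for $f^{(p_i)}_\ve(q)$ only makes sense, and agrees with Definition \ref{2014_05_23_def5.1}, provided that for every $y$ in the support of $\rho_\ve$ the point $q_i(y) = \exp_{p_i}(\exp_{p_i}^{-1}q - y)$ defined by \eqref{2014_07_29_eq:No2.2} lies in the domain $\B_{r_i}(o_{p_i})$ of $\exp_{p_i}$; this is exactly what the choice $\ve \in (0,\ve_i)$ with $\ve_i := r_i - \max\{\|\exp_{p_i}^{-1}q\| : q \in \supp\psi_i\}$ guarantees, since for $q \in \supp\psi_i$ and $\|y\| \le \ve < \ve_i$ we get $\|\exp_{p_i}^{-1}q - y\| \le \|\exp_{p_i}^{-1}q\| + \ve < r_i$. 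So the first step is to record that $q_i(y)$ is well-defined for all $q \in \supp\psi_i$ and all $y \in \supp\rho_\ve$.

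Next I would write, for $q \in \supp\psi_i$,
\[
f^{(p_i)}_\ve(q) - f(q) = \int_{\R^n} \bigl(f(q_i(y)) - f(q)\bigr)\rho_\ve(y)\,dy,
\]
using $\int \rho_\ve = 1$ to absorb $f(q)$ under the integral. Then bound the integrand by $|f(q_i(y)) - f(q)| \le \L(f)\,d(q_i(y), q)$. Now observe $q = \exp_{p_i}(\exp_{p_i}^{-1}q)$ and $q_i(y) = \exp_{p_i}(\exp_{p_i}^{-1}q - y)$, so both $q$ and $q_i(y)$ are images under $\exp_{p_i}|_{\B_{r_i}(o_{p_i})}$ of two vectors differing by exactly $y$; hence $d(q_i(y), q) \le \L(\exp_{p_i}|_{\B_{r_i}(o_{p_i})})\,\|y\|$. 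Since $\rho_\ve$ is supported in $\{\|y\| \le \ve\}$, we get $\|y\| \le \ve$ throughout, so the integrand is at most $\ve \cdot \L(f) \cdot \L(\exp_{p_i}|_{\B_{r_i}(o_{p_i})})$, and integrating against $\rho_\ve$ (total mass $1$) yields the claimed bound. Taking the supremum over $q \in \supp\psi_i$ finishes the proof.

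There is no serious obstacle here; the only point requiring care is the domain issue in the first paragraph — one must make sure the argument $\exp_{p_i}^{-1}q - y$ of $\exp_{p_i}$ stays inside $\B_{r_i}(o_{p_i})$ so that both the integrand $f(q_i(y))$ and the Lipschitz estimate for $\exp_{p_i}$ are legitimate, and this is precisely why $\ve$ is restricted to $(0,\ve_i)$. Everything else is the standard mollification estimate for Lipschitz functions transported through the chart $\exp_{p_i}$, and the fact that $B_{r_i}(p_i)$ is strongly convex is what makes $\exp_{p_i}|_{\B_{r_i}(o_{p_i})}$ a bi-Lipschitz diffeomorphism with finite Lipschitz constant, so all the quantities appearing are finite.
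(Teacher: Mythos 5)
Your proposal is correct and follows essentially the same route as the paper's proof: the domain check via the triangle inequality (which is exactly why $\ve<\ve_i$ is required), the bound $d(q_i(y),q)\le \L(\exp_{p_i}|_{\B_{r_i}(o_{p_i})})\|y\|$, and the standard mollification estimate using $\int\rho_\ve=1$ and $\L(f)$. No gaps.
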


\begin{proof}
Take any point $q\in\supp\psi_i$. Then, for any $y\in\B_{\ve_i}(o_{p_i})$, 
we have, by the triangle inequality,
\[
\|\exp_{p_i}^{-1}q-y \| \le \| \exp_{p_i}^{-1}q \|+\|y\|<r_i.
\] 
Thus, $\exp_{p_i}^{-1}q-y$ and $\exp_{p_i}^{-1}q$ are elements of $\B_{r_i}(o_{p_i})$ for all $y\in\B_{\ve_i}(o_{p_i})$. Hence,
\begin{equation}\label{eq:N2.3}
d(q_i(y),q) 
\le 
\L (\exp_{p_i}|_{\B_{r_i}(o_{p_i})}) \| (\exp_{p_i}^{-1}q-y) -\exp_{p_i}^{-1}q \| 
= \L (\exp_{p_i}|_{\B_{r_i}(o_{p_i})}) \|y\|.
\end{equation}
On the other hand, since $\int_{\R^n} \rho_\ve (y)dy = 1$, we have  
$f(q) = \int_{\|y\| < \ve} f(q) \rho_\ve (y)dy$. Then, 
\begin{align}
|f^{(p_i)}_\ve(q) -f(q)|
&\le \int_{\|y\| <\ve} |f(q_i(y)) -f(q)|\,\rho_\ve (y) \,dy\label{2014_05_25_lem5.2_proof1}\\
& \le \L(f) \int_{\|y\| <\ve} d(q_i(y), q)\,\rho_\ve (y) \,dy,\notag 
\end{align}
where note that $\| y\| < \ve$ when $\rho_\ve(y) \not= 0$.
Combining \eqref{eq:N2.3} with \eqref{2014_05_25_lem5.2_proof1}, we get \eqref{2014_08_18_lemN5_1}.$\qedd$
\end{proof}

Take any $p \in M$ and any $p_i$ with $p\in \supp \psi_i$, and fix them in the following. Moreover, we assume $q\in\supp \psi_i$ in this situation. 

\begin{lemma}\label{lemN6}
For any $\tilde u \in \Sph^{n-1}_q:=\{v \in T_qM\,|\, \|v\|=1 \}$, 
\begin{equation}\label{eq:N1}
(df_\ve^{(p_i)} )_q(\tilde u)=\int_{\R^n}df_{q_i(y)}
(Y_y^{(\tilde u)}(1) ) \rho_\ve (y) dy
\end{equation}
holds,
where
$q_i(y) \in M$ is by definition \eqref{2014_07_29_eq:No2.2}, and
\[
Y_y^{(\tilde u)}(t)
:=\frac{\partial}{\partial s}\exp_{p_i}t (\exp_{p_i}^{-1}(\exp_q s\tilde u)-y)\bigg|_{s=0}
\] 
is a Jacobi field along the geodesic $\exp_{p_i}t(\exp_{p_i}^{-1}q-y)$ 
for each $y$. 
\end{lemma}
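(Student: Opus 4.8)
The plan is to obtain $(df^{(p_i)}_\ve)_q(\tilde u)$ as a directional derivative, computed by differentiating the defining integral \eqref{eq:N1.0}, where one keeps the $q$-dependence inside the argument of $f$ rather than inside the mollifier. Take the curve $s\mapsto\exp_q s\tilde u$ through $q$ with velocity $\tilde u$ at $s=0$ and write
\[
(df^{(p_i)}_\ve)_q(\tilde u)=\frac{d}{ds}\bigg|_{s=0}f^{(p_i)}_\ve(\exp_q s\tilde u)=\frac{d}{ds}\bigg|_{s=0}\int_{\R^n}f\bigl(q_i^s(y)\bigr)\,\rho_\ve(y)\,dy,
\]
where $q_i^s(y):=\exp_{p_i}\bigl(\exp_{p_i}^{-1}(\exp_q s\tilde u)-y\bigr)$, so that $q_i^0(y)=q_i(y)$ and, directly from the definition in the statement, $\tfrac{\partial}{\partial s}\big|_{s=0}q_i^s(y)=Y^{(\tilde u)}_y(1)$. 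That $Y^{(\tilde u)}_y$ is a Jacobi field along $t\mapsto\exp_{p_i}t(\exp_{p_i}^{-1}q-y)$ is simply the standard fact that the variation field of a one-parameter family of geodesics (here all issuing from $p_i$, so the field vanishes at $p_i$) is Jacobi.

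Before differentiating I would record that $f^{(p_i)}_\ve$ is $C^\infty$: in the form $\int_{\R^n}f(\exp_{p_i}y)\,\rho_\ve(\exp_{p_i}^{-1}q-y)\,dy$ the only $q$-dependence is through the smooth factor $\rho_\ve(\exp_{p_i}^{-1}q-\,\cdot\,)$, whose $q$-derivatives are bounded uniformly in $y$ and supported in a fixed compact set, so differentiation under the integral is legitimate; in particular the directional derivative above exists and equals $\lim_{s\to0}\int_{\R^n}\tfrac1s\bigl[f(q_i^s(y))-f(q_i(y))\bigr]\rho_\ve(y)\,dy$.

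The substance of the proof is the passage of this limit inside the integral, which I would justify by dominated convergence. For the pointwise statement: since $y\mapsto q_i(y)$ is a diffeomorphism from a ball in $\R^n$ onto a neighbourhood of $q$ in $M$, Rademacher's theorem together with the fact that a diffeomorphism carries null sets to null sets shows that $f$ is differentiable at $q_i(y)$ for almost every $y$; for each such $y$ the chain rule applied to the smooth curve $s\mapsto q_i^s(y)$ gives $\tfrac{d}{ds}\big|_{s=0}f(q_i^s(y))=df_{q_i(y)}\bigl(Y^{(\tilde u)}_y(1)\bigr)$, so the difference quotients converge a.e. to the claimed integrand. For the domination: by Lipschitz continuity of $f$ the difference quotient is bounded in absolute value by $\L(f)\,d\bigl(q_i^s(y),q_i(y)\bigr)/|s|$, and since $y$ ranges over the compact support of $\rho_\ve$, the maps $\exp_{p_i}$ and $\exp_{p_i}^{-1}$ are Lipschitz on the relevant compact sets, and $d(\exp_q s\tilde u,q)\le|s|$ for small $s$, this quantity is bounded by a constant independent of $y$ and of small $s$; hence $C\rho_\ve$ is an integrable majorant. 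Dominated convergence then yields \eqref{eq:N1} after renaming the variable.

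I expect the only genuine obstacle to be this measure-theoretic bookkeeping: ensuring the exceptional set of $y$ (those for which $f$ fails to be differentiable at $q_i(y)$, so that the chain-rule step is invalid) is really null — which is exactly where the diffeomorphism property of $y\mapsto q_i(y)$ is used — and exhibiting one majorant valid uniformly for all small $s$. The geometric ingredients, namely the exponential-map identities and the Jacobi-field description of $Y^{(\tilde u)}_y$, are routine and use no curvature hypotheses.
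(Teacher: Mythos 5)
Your proposal is correct and follows essentially the same route as the paper: the paper simply writes $(df^{(p_i)}_\ve)_q(\tilde u)=\frac{d}{ds}f^{(p_i)}_\ve(\exp_q s\tilde u)\big|_{s=0}$ and invokes \eqref{eq:N1.0}, i.e.\ differentiation of the defining integral along the curve $s\mapsto\exp_q s\tilde u$, with the chain rule producing $df_{q_i(y)}(Y^{(\tilde u)}_y(1))$. Your additional bookkeeping (smoothness of $f^{(p_i)}_\ve$ from the mollifier form, the a.e.\ chain rule via Rademacher and the diffeomorphism $y\mapsto q_i(y)$, and the Lipschitz majorant for dominated convergence) just makes explicit the justifications the paper leaves implicit.
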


\begin{proof}
Since 
\[
(df^{(p_i)}_\ve)_q(\tilde u)=\frac{d}{d s} 
f_\ve^{(p_i)}(\exp_q s\tilde u)\bigg|_{s=0},
\] 
we get \eqref{eq:N1} by \eqref{eq:N1.0}.$\qedd$
\end{proof}

For a pair of points $q_1$ and $q_2$ of $M$ admitting a unique minimal geodesic segment $\gamma$, let $\tau_{q_2}^{q_1}: T_{q_1}M \lra T_{q_2}M$ denote the parallel transportation along $\gamma$. In what follows, 
we will omit brackets of $\tau_{q_2}^{q_1}(u)$ ($u \in T_{q_1}M$) 
for simplicity, i.e, $\tau_{q_2}^{q_1}u:= \tau_{q_2}^{q_1}(u)$. 

\begin{lemma}\label{lemN7} Let $\wt{U}_{q_i(y)}:=\tau_{q_i(y)}^q \tilde{u}$. 
Then, we have
\[
\left| 
(df_\ve^{(p_i)})_q(\tilde u)-\int_{\R^n}df_{q_i(y)}(\wt{U}_{q_i(y)})\rho_\ve(y)dy
\right| 
\le 
\L (f) 
\sup_{y\in \B_\ve (o_{p_i})} 
\|Y^{(\tilde u)}_y(1)-\wt{U}_{q_i(y)}\|.
\]
\end{lemma}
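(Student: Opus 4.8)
The plan is to read the estimate off directly from Lemma \ref{lemN6}, using only linearity of the differential and the elementary pointwise bound $|df_x(w)| \le \L(f)\,\|w\|$ valid wherever $df_x$ exists; no new idea is required.

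First I would record that $Y_y^{(\tilde u)}(1)$ and $\wt{U}_{q_i(y)}=\tau_{q_i(y)}^q\tilde u$ are both tangent vectors at the single point $q_i(y)$, so their difference is meaningful, and that the parallel transport $\tau_{q_i(y)}^q$ is well defined: since $q\in\supp\psi_i$ and $\rho_\ve$ is supported in $\{\|y\|<\ve\}$ with $\ve<\ve_i$, both $\exp_{p_i}^{-1}q$ and $\exp_{p_i}^{-1}q-y$ lie in $\B_{r_i}(o_{p_i})$, whence $q$ and $q_i(y)$ lie in the strongly convex ball $B_{r_i}(p_i)$ and are joined by a unique minimal geodesic. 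Since $f$ is Lipschitz, Rademacher's theorem gives a null set $E_f\subset M$ off which $df$ exists; as $y\mapsto q_i(y)$ is a diffeomorphism onto its image, $df_{q_i(y)}$ exists for a.e.\ $y$, so the manipulations below are legitimate.

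Next, subtracting $\int_{\R^n}df_{q_i(y)}(\wt{U}_{q_i(y)})\rho_\ve(y)\,dy$ from the right-hand side of \eqref{eq:N1} and using linearity of each $df_{q_i(y)}$,
\[
(df_\ve^{(p_i)})_q(\tilde u) - \int_{\R^n}df_{q_i(y)}(\wt{U}_{q_i(y)})\rho_\ve(y)\,dy
= \int_{\R^n}df_{q_i(y)}\bigl(Y_y^{(\tilde u)}(1)-\wt{U}_{q_i(y)}\bigr)\rho_\ve(y)\,dy.
\]
Then I would take absolute values, pass them under the integral, apply $|df_{q_i(y)}(w)|\le\L(f)\|w\|$, and use $\rho_\ve\ge0$, $\rho_\ve(y)=0$ for $\|y\|\ge\ve$, and $\int_{\R^n}\rho_\ve(y)\,dy=1$ to get
\[
\left|(df_\ve^{(p_i)})_q(\tilde u)-\int_{\R^n}df_{q_i(y)}(\wt{U}_{q_i(y)})\rho_\ve(y)\,dy\right|
\le \L(f)\int_{\|y\|<\ve}\|Y_y^{(\tilde u)}(1)-\wt{U}_{q_i(y)}\|\,\rho_\ve(y)\,dy,
\]
and bounding the last integrand by its supremum over $\B_\ve(o_{p_i})$ finishes the proof.

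The argument is routine; the only thing demanding a little attention is the preliminary point of the second paragraph — that $df_{q_i(y)}$ is defined for a.e.\ $y$ on $\supp\rho_\ve$ and that the geodesic defining $\wt{U}_{q_i(y)}$ exists and is unique there — both of which follow at once from Rademacher's theorem and the strong convexity of $B_{r_i}(p_i)$, respectively. I do not anticipate any genuine obstacle.
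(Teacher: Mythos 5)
Your proposal is correct and is essentially the paper's own argument: combine the identity of Lemma \ref{lemN6} with linearity of $df_{q_i(y)}$, the pointwise Lipschitz bound $|df_{q_i(y)}(w)|\le \L(f)\|w\|$ from \eqref{2014_05_25_lem5.2_1}, and the mollifier properties $\rho_\ve\ge 0$, $\supp\rho_\ve\subset\B_\ve(o_{p_i})$, $\int\rho_\ve=1$. Your extra remarks on the a.e.\ existence of $df$ and the well-definedness of $\tau_{q_i(y)}^q$ are harmless additions that the paper leaves implicit.
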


\begin{proof}
By Lemma \ref{lemN6} and \eqref{2014_05_25_lem5.2_1},
we get
\begin{align*}
\left| 
(df_\ve^{(p_i)})_q(\tilde u)-\int_{\R^n}df_{q_i(y)}(\wt{U}_{q_i(y)})\rho_\ve(y)dy
\right| 
&\le \int_{\R^n}|df_{q_i(y)}(Y_y^{(\tilde u)}(1)-\wt{U}_{q_i(y)})|\rho_\ve(y)dy\\[1mm]
&\le \L(f)\sup_{y\in \B_\ve (o_{p_i})} \| Y^{(\tilde u)}_y(1)-\wt{U}_{q_i(y)} \|.
\end{align*}
$\qedd$
\end{proof}

\begin{lemma}\label{lemN8}
For any $\eta>0$, there exists a number $\ve_i(\eta) > 0$ such that 
\[
\sup\{\| Y^{(\tilde u)}_y(1)-\wt{U}_{q_i(y)} \|\; |\:  y\in \B_{\ve_i(\eta)}(o_{p_i}),\,q \in \supp \psi_i,\, \tilde u\in \Sph_q^{n-1}\}<\eta.
\]
\end{lemma}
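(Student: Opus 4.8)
The plan is a routine joint-continuity-plus-compactness argument, whose crux is the observation that the vector difference in question \emph{vanishes identically at $y=o_{p_i}$}. Indeed, if $y=o_{p_i}$ then $q_i(o_{p_i})=\exp_{p_i}(\exp_{p_i}^{-1}q)=q$; the Jacobi field $Y^{(\tilde u)}_{o_{p_i}}(t)=\frac{\partial}{\partial s}\exp_{p_i}t\exp_{p_i}^{-1}(\exp_q s\tilde u)\big|_{s=0}$ takes the value $\frac{d}{ds}\exp_q s\tilde u\big|_{s=0}=\tilde u$ at $t=1$, while $\wt U_{q_i(o_{p_i})}=\tau^q_q\tilde u=\tilde u$ as well. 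Hence $Y^{(\tilde u)}_{o_{p_i}}(1)-\wt U_{q_i(o_{p_i})}=o$ for every $q\in\supp\psi_i$ and $\tilde u\in\Sph^{n-1}_q$, and the lemma just asserts that this difference stays uniformly small once $y$ is close enough to $o_{p_i}$.

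First I would record the closed-form dependence of the Jacobi field on the data: writing $w:=\exp_{p_i}^{-1}q-y$ and $\xi:=(d\exp_{p_i}^{-1})_q(\tilde u)$, the definition of $Y^{(\tilde u)}_y$ gives
\[
Y^{(\tilde u)}_y(1)=(d\exp_{p_i})_w(\xi)\in T_{q_i(y)}M,
\]
so $(y,q,\tilde u)\mapsto Y^{(\tilde u)}_y(1)$ is a composition of smooth maps, hence smooth; note that for $\|y\|<\ve_i$ and $q\in\supp\psi_i$ the base point $w$ stays in $\B_{r_i}(o_{p_i})$ by the very choice of $\ve_i$. Next, since $q$ and $q_i(y)$ both lie in the strongly convex ball $B_{r_i}(p_i)$, there is a unique minimal geodesic between them lying in $B_{r_i}(p_i)$, so the parallel transport $\tau^q_{q_i(y)}$—and therefore $(y,q,\tilde u)\mapsto\wt U_{q_i(y)}=\tau^q_{q_i(y)}\tilde u\in T_{q_i(y)}M$—depends smoothly on $(y,q,\tilde u)$ as well. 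Both assignments take values in the same (moving) tangent space $T_{q_i(y)}M$, so
\[
g(y,q,\tilde u):=\|Y^{(\tilde u)}_y(1)-\wt U_{q_i(y)}\|
\]
is a well-defined, continuous $\R$-valued function; to see the continuity cleanly one may restrict to a fixed chart containing the compact set $\supp\psi_i$ (or embed a neighbourhood of $\supp\psi_i$ isometrically in some $\R^m$), where subtraction and norm of tangent vectors are continuous operations.

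Then I would run the compactness step. Fix $\ve_i'\in(0,\ve_i)$ and set $K:=\ol{\B_{\ve_i'}(o_{p_i})}\times\{(q,\tilde u)\mid q\in\supp\psi_i,\ \tilde u\in\Sph^{n-1}_q\}$, which is compact because $\supp\psi_i$ is. The function $g$ is continuous on $K$ and, by the opening observation, vanishes on the slice $\{o_{p_i}\}\times\{(q,\tilde u)\}$. If the conclusion failed for some $\eta>0$, one could pick $y_k\to o_{p_i}$ and $(q_k,\tilde u_k)$ with $g(y_k,q_k,\tilde u_k)\ge\eta$; passing to a convergent subsequence $(q_k,\tilde u_k)\to(q_*,\tilde u_*)$ and using continuity of $g$ would give $g(o_{p_i},q_*,\tilde u_*)\ge\eta>0$, a contradiction. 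Hence there is $\ve_i(\eta)\in(0,\ve_i')$ with $g<\eta$ on $\B_{\ve_i(\eta)}(o_{p_i})\times\{(q,\tilde u)\}$, which is the assertion. The only genuinely delicate point is the bookkeeping in the second paragraph—making precise the joint continuity of two vector fields whose base point $q_i(y)$ itself varies—but this is handled by working in one chart (or an isometric embedding) over the compact set $\supp\psi_i$, with strong convexity of $B_{r_i}(p_i)$ guaranteeing that $\tau^q_{q_i(y)}$ is unambiguously and smoothly defined; no curvature assumption is used.
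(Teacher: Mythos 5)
Your proposal is correct and follows essentially the same route as the paper: both rest on the observation that $Y^{(\tilde u)}_y(1)=\wt{U}_{q_i(y)}=\tilde u$ at $y=o_{p_i}$, followed by compactness of $\supp\psi_i$ and the unit sphere bundle to make the smallness uniform. Your write-up merely makes explicit the joint continuity of $(y,q,\tilde u)\mapsto\|Y^{(\tilde u)}_y(1)-\wt{U}_{q_i(y)}\|$ and the subsequence argument, details the paper's terser proof leaves implicit.
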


\begin{proof}
Let $\eta>0$ be an arbitrary number.
Take any $q\in\supp\psi_i$ and any $\tilde u\in\Sph_q^{n-1}$. 
Since $Y^{(\tilde u)}_y(1)=\wt{U}_{q_i(y)}=\tilde u$ for $y=o$, 
there exists a positive number $\ve(q,\tilde u,p_i,\eta) >0$ such that
$
\|Y^{(\tilde u)}_y(1)-\wt{U}_{q_i(y)}\|<\eta
$
for all $y\in \B_{\ve(q,\,\tilde u,\,p_i,\,\eta)}(o_{p_i})$. 
Since
$\supp\psi_i$ and $\Sph_q^{n-1}$
are compact, there exists a positive number $\ve_i(\eta)$ such that 
$\|Y^{(\tilde u)}_y(1)-\wt{U}_{q_i(y)}\|<\eta$ 
for all $q\in\supp \psi_i$, $\tilde{u} \in\Sph_q^{n-1}$, 
and $y\in \B_{\ve_i(\eta)}(o_{p_i})$.
$\qedd$
\end{proof}

For each $v\in \R^n=T_pM$, let $\pi_p(v)$ be the nearest point on 
$\partial f(p)$ from $v$. 
Since $\partial f(p)$ is convex, the nearest point is uniquely determined.
Moreover, it is easy to check that the map $\pi_p : \R^n\lra \partial f(p)$ is continuous.

\begin{lemma}\label{lemN9} Set 
\begin{equation}\label{2014_08_22_lemN9_1}
v^{(i)}_\ve:=\int_{\R^n}\pi_p(\tau_p^q\tau_q^{q_i(y)}\nabla f (q_i(y)) ) \rho_\ve(y)dy
\end{equation}
and $u:=\tau_p^q\tilde u$. Then, we have  
\[
\left| 
\int_{\R^n}df_{q_i(y)}(\wt{U}_{q_i(y)})\rho_\ve (y)dy-\langle v^{(i)}_\ve,u\rangle
\right|
\le
\int_{\R^n} \|(1-\pi_p)(\tau_p^q\tau_q^{q_i(y)}\nabla f (q_i(y)) ) \|\rho_\ve(y)dy,
\]
where we set $\wt{U}_{q_i(y)} := \tau^q_{q_i(y)}\tilde{u}$. 
\end{lemma}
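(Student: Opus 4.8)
The plan is to rewrite the left-hand side as a single integral of a Euclidean inner product over the fixed tangent space $T_pM\cong\R^n$ and then apply the Cauchy--Schwarz inequality. First I would abbreviate
\[
w(y):=\tau_p^q\,\tau_q^{q_i(y)}\nabla f(q_i(y))\in T_pM,
\]
which is defined for almost every $y\in\B_\ve(o_{p_i})$: the exceptional set $E_f$ of Rademacher's theorem is a null set, and $y\mapsto q_i(y)$ is a diffeomorphism onto its image inside the strongly convex ball $B_{r_i}(p_i)$, so $\{y\,|\,\nabla f(q_i(y))\text{ does not exist}\}$ is null as well. Since $\|w(y)\|\le\L(f)$ and $\rho_\ve$ has compact support, the vector-valued integral $\int_{\R^n}w(y)\rho_\ve(y)\,dy\in T_pM$ and all the integrals below converge; this is the same integrability already used in Lemmas \ref{lemN6} and \ref{lemN7}.

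Next I would pass from the differential to the gradient and then unwind the parallel transports. Since parallel transport along the unique minimal geodesics joining $q_i(y)$, $q$ and $p$ inside $B_{r_i}(p_i)$ is a linear isometry, with $\tau_q^{q_i(y)}=(\tau^q_{q_i(y)})^{-1}$, we get for a.e.\ $y$
\[
df_{q_i(y)}(\wt{U}_{q_i(y)})
=\langle\nabla f(q_i(y)),\tau^q_{q_i(y)}\tilde u\rangle
=\langle\tau_q^{q_i(y)}\nabla f(q_i(y)),\tilde u\rangle
=\langle w(y),u\rangle,
\]
where $u=\tau_p^q\tilde u$ and, because $\tilde u\in\Sph^{n-1}_q$, $\|u\|=1$. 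By linearity of the inner product in its first argument and of the integral,
\[
\int_{\R^n}df_{q_i(y)}(\wt{U}_{q_i(y)})\rho_\ve(y)\,dy
=\Bigl\langle\int_{\R^n}w(y)\rho_\ve(y)\,dy,\;u\Bigr\rangle,
\]
while by \eqref{2014_08_22_lemN9_1} we have $\langle v^{(i)}_\ve,u\rangle=\Bigl\langle\int_{\R^n}\pi_p(w(y))\rho_\ve(y)\,dy,\;u\Bigr\rangle$.

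Subtracting these two identities and using $(1-\pi_p)(w(y))=w(y)-\pi_p(w(y))$, the quantity to be estimated becomes
\[
\Bigl|\int_{\R^n}\langle(1-\pi_p)(w(y)),u\rangle\,\rho_\ve(y)\,dy\Bigr|
\le\int_{\R^n}\bigl|\langle(1-\pi_p)(w(y)),u\rangle\bigr|\,\rho_\ve(y)\,dy
\le\int_{\R^n}\|(1-\pi_p)(w(y))\|\,\rho_\ve(y)\,dy,
\]
the last inequality being Cauchy--Schwarz together with $\|u\|=1$ and $\rho_\ve\ge0$; this is exactly the claimed bound.

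There is no real obstacle here: the lemma is a formal consequence of the isometry property of parallel transport and of Cauchy--Schwarz. The only points that need attention are the bookkeeping of the chain of transports $T_{q_i(y)}M\to T_qM\to T_pM$ together with which tangent space each inner product is computed in, and the observation---already implicit in the preceding lemmas---that $\nabla f$, and hence $y\mapsto w(y)$, is defined almost everywhere and is measurable, so that the $\R^n$-valued integral defining $v^{(i)}_\ve$ makes sense.
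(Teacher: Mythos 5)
Your proof is correct and is essentially the paper's own argument: identify $df_{q_i(y)}(\wt{U}_{q_i(y)})=\langle \tau_p^q\tau_q^{q_i(y)}\nabla f(q_i(y)),u\rangle$ via the isometry of parallel transport, subtract the definition of $v^{(i)}_\ve$, and apply Cauchy--Schwarz with $\|u\|=1$. The extra remarks on measurability and almost-everywhere existence of $\nabla f$ are fine but not a departure from the paper's route.
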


\begin{proof} It is easy to check that 
$
df_{q_i(y)}(\wt{U}_{q_i(y)})
=\langle \tau^q_p\tau_q^{q_i(y)}\nabla f (q_i(y)) ,u\rangle
$. Thus, by the Cauchy-Schwarz inequality, we get
\begin{align*}
\left| 
\int_{\R^n}df_{q_i(y)}(\wt{U}_{q_i(y)})\rho_\ve(y)dy-\langle v^{(i)}_\ve,u\rangle
\right|
&=
\left| \int_{\R^n}\langle(1-\pi_p)( \tau_p^q\tau_q^{q_i(y)}\nabla f (q_i(y)) ),u\rangle \rho_\ve(y)dy \right|\\[1mm]
&\le \int_{\R^n} \| (1-\pi_p)(\tau_p^q\tau_q^{q_i(y)}\nabla f (q_i(y)) ) \|\rho_\ve(y)dy.
\end{align*}
$\qedd$
\end{proof}

\begin{definition}
A map $F:\R^n\lra\R^k$ is called a {\it locally $L^1$-map} 
if each $F^i:\R^n\lra \R$ is a locally $L^1$-function, 
where $F^i(x)$ denotes the $i$-th component of $F(x)\in \R^k$. 
\end{definition}

\begin{lemma}\label{lemN4}
Let $F:\R^n\lra \R^k$ be a locally $L^1$-map, 
and $C \subset \R^k$ a compact convex set. If $F(\R^n)\subset C$, 
then $\int_{\R^n}\rho(x)F(x)dx \in C$ for all non-negative continuous function $\rho: \R^n \lra \R$ whose support is compact with $\int_{\R^n}\rho(x)dx=1$.
\end{lemma}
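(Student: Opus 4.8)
\textbf{Proof plan for Lemma \ref{lemN4}.}

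The plan is to reduce the statement to the characterization of a compact convex set as an intersection of closed half-spaces, i.e., to the separating-hyperplane (Hahn--Banach) theorem in $\R^k$. First I would fix a non-negative continuous $\rho$ with $\int_{\R^n}\rho\,dx = 1$ and observe that the vector $I := \int_{\R^n}\rho(x)F(x)\,dx$ is well-defined: each component $I^j = \int_{\R^n}\rho(x)F^j(x)\,dx$ is an integral of a locally $L^1$-function against a bounded compactly-supported-in-effect weight — more precisely, since $F(\R^n)\subset C$ and $C$ is compact, each $F^j$ is in fact bounded, so $\rho F^j$ is integrable and $I^j$ is finite. Thus $I\in\R^k$ makes sense.

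Next, suppose for contradiction that $I\notin C$. Since $C$ is a nonempty compact convex set and $\{I\}$ is a disjoint compact convex set, the separating-hyperplane theorem provides a vector $w\in\R^k\setminus\{o\}$ and a real number $\alpha$ with $\langle w, c\rangle \le \alpha < \langle w, I\rangle$ for every $c\in C$. Now I would integrate the pointwise inequality $\langle w, F(x)\rangle \le \alpha$, which holds for all $x\in\R^n$ because $F(x)\in C$, against the probability density $\rho$:
\[
\langle w, I\rangle = \int_{\R^n}\rho(x)\langle w, F(x)\rangle\,dx \le \alpha\int_{\R^n}\rho(x)\,dx = \alpha,
\]
where the first equality is just linearity of the inner product under the integral sign (applied componentwise). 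This contradicts $\alpha < \langle w, I\rangle$, so $I\in C$ after all.

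The only genuinely delicate point is the measure-theoretic bookkeeping at the start: one must check that $\int\rho F$ is a legitimate finite vector and that pulling the constant $w$ through the integral is justified — both follow from the boundedness of $F$ (a consequence of $F(\R^n)\subset C$ with $C$ compact) together with $\rho\ge 0$, $\int\rho = 1$, and $\rho$ continuous. Everything after that is the standard one-line separation argument. I would also remark that continuity of $\rho$ is more than is needed (measurability plus integrability suffices), but the stated hypotheses are exactly what the applications in Lemma \ref{lemN9} supply through the mollifier, so I would leave it as is.
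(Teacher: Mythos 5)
Your proposal is correct and follows essentially the same route as the paper: the paper checks directly that $\int\rho F\,dx$ lies in every closed half-space containing $C$ (using $C=\bigcap_{C\subset H_\alpha}H_\alpha$ and integrating $\langle F(x)-v_\alpha,n_\alpha\rangle\ge0$), while you phrase the identical half-space/integration argument as a strict-separation contradiction. The integrability remarks you add (boundedness of $F$ from $F(\R^n)\subset C$ compact) are fine and consistent with what the paper implicitly uses.
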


\begin{proof}
Let $\{H_\alpha\}_{\alpha\in\Gamma}$ be the family 
of all closed half spaces in $\R^k$. It is well known that 
$K=\bigcap_{K\subset H_\alpha}H_\alpha$ 
for any closed convex subset $K$ of $\R^k$. 
Choose any closed half space $H_\alpha$ 
with $C\subset H_\alpha$. Let $n_\alpha$ 
denote the inward pointing unit normal vector 
of $H_\alpha$ and $v_\alpha\in \R^k $ a nearest point of $C$ 
from $\partial H_\alpha$. 
Then, $\langle F(x)-v_\alpha,n_\alpha\rangle\geq0$ holds for all $x$, since $F(x)\in C$.
Since $\int_{\R^n}\rho(x)dx=1$, $\int_{\R^n}\rho(x)v_\alpha dx=v_\alpha$. 
Thus,
\[
\left\langle 
\int\rho(x)F(x)dx-v_\alpha,n_\alpha
\right\rangle
=\int\rho(x)\langle F(x)-v_\alpha,n_\alpha\rangle dx\geq0,
\]
and hence $\int_{\R^n}\rho(x)F(x)dx\in H_\alpha$ for all $H_\alpha$ with $C\subset H_\alpha$. 
Thus, $\int_{\R^n}\rho(x)F(x)dx\in C$.$\qedd$
\end{proof}

\begin{lemma}\label{lemN1}
For any $\eta>0$, there exists a number $\delta(p,\eta)>0$ such that 
\[
\tau_p^{q_1}\tau_{q_1}^{q_2}\nabla f(q_2) \in \partial f(p)_\eta
:=\bigcup_{v\in\partial f(p)}\B_{\eta}(v)
\] 
holds for all $q_1\in B_{\delta(p,\eta)}(p)$ 
and $q_2\in B_{\delta(p,\,\eta)}(p)\cap(M\setminus E_f)$.
\end{lemma}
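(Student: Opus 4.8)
The plan is to prove Lemma~\ref{lemN1} by a continuity/compactness argument, using only the definition of the generalized gradient $\partial f(p)$ and the continuity of parallel transport. The key point is that $\partial f(p)$ already ``sees'' all limits of nearby gradients when these are transported back to $p$, so if the conclusion failed we could extract a sequence of points on which a genuine gradient, transported to $p$, stays a fixed distance $\eta$ away from $\partial f(p)$, and its limit direction would then belong to $\partial f(p)$ by definition~\eqref{2015_08_02_Clarke_1} --- a contradiction.

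First I would argue by contradiction: suppose no such $\delta(p,\eta)>0$ exists for some fixed $\eta>0$. Then for every $j\in\N$ there are points $q_1^{(j)}\in B_{1/j}(p)$ and $q_2^{(j)}\in B_{1/j}(p)\cap(M\setminus E_f)$ with
\[
\tau_p^{q_1^{(j)}}\tau_{q_1^{(j)}}^{q_2^{(j)}}\nabla f(q_2^{(j)})\notin \partial f(p)_\eta,
\]
i.e.\ $\dist\bigl(\tau_p^{q_1^{(j)}}\tau_{q_1^{(j)}}^{q_2^{(j)}}\nabla f(q_2^{(j)}),\,\partial f(p)\bigr)\ge\eta$. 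Both $q_1^{(j)}$ and $q_2^{(j)}$ converge to $p$. Since $f$ is Lipschitz, $\|\nabla f(q_2^{(j)})\|\le \L(f)$ wherever the gradient exists, so the transported vectors $\tau_p^{q_2^{(j)}}\nabla f(q_2^{(j)})$ (note the composed parallel transport along the short geodesics through $q_1^{(j)}$ is, for $j$ large, just parallel transport along a unique minimal geodesic from $q_2^{(j)}$ to $p$ inside a convex ball, up to an error controlled below) form a bounded sequence in the fixed vector space $T_pM=\R^n$; passing to a subsequence, $\tau_p^{q_2^{(j)}}\nabla f(q_2^{(j)})\to w$ for some $w\in\R^n$.

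Next I would identify $w$ as an element of $\partial f(p)$. By the definition~\eqref{2015_08_02_Clarke_1}, $\partial f(p)$ is the convex hull of all limits $\lim_{i\to\infty}dF_{x_i}$ (here $\lim_i\nabla f(x_i)$ computed in a fixed chart) over sequences $x_i\in M\setminus E_f$ with $x_i\to p$; since parallel transport $\tau_p^{x}$ depends continuously on $x$ and equals the identity at $x=p$, the limit of $\tau_p^{q_2^{(j)}}\nabla f(q_2^{(j)})$ coincides with the limit of $\nabla f(q_2^{(j)})$ read in the chart given by $\exp_p$, so $w\in\partial f(p)$ --- in fact $w$ is one of the generating vectors of the convex hull. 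The remaining technical point is to control the discrepancy between the composite transport $\tau_p^{q_1^{(j)}}\tau_{q_1^{(j)}}^{q_2^{(j)}}$ and the direct transport $\tau_p^{q_2^{(j)}}$: since all three points lie in an arbitrarily small strongly convex ball around $p$ for $j$ large, the holonomy around the small geodesic triangle $p\,q_1^{(j)}\,q_2^{(j)}$ tends to the identity, so $\|\tau_p^{q_1^{(j)}}\tau_{q_1^{(j)}}^{q_2^{(j)}}\nabla f(q_2^{(j)})-\tau_p^{q_2^{(j)}}\nabla f(q_2^{(j)})\|\to0$. Hence $\tau_p^{q_1^{(j)}}\tau_{q_1^{(j)}}^{q_2^{(j)}}\nabla f(q_2^{(j)})\to w\in\partial f(p)$, contradicting that this sequence stays at distance $\ge\eta$ from $\partial f(p)$. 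This proves the existence of $\delta(p,\eta)$.

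I expect the main obstacle to be the bookkeeping around parallel transport along the broken path $q_2^{(j)}\to q_1^{(j)}\to p$ versus the straight geodesic $q_2^{(j)}\to p$, and making precise that the generalized gradient, defined chartwise, is correctly captured by the intrinsic parallel transport $\tau_p^{(\cdot)}$ --- but both are routine consequences of the smooth dependence of geodesics and parallel transport on endpoints in a convex ball, together with the chart-independence of $\partial f$ noted after~\eqref{2015_08_02_Clarke_1}. Everything else is a standard extract-a-convergent-subsequence argument driven by the Lipschitz bound $\|\nabla f\|\le\L(f)$.
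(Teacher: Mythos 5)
Your argument is correct and is essentially the paper's own proof, just written out in full: the paper also takes sequences $q_1^{(j)},q_2^{(j)}\to p$, notes that $\lim_j\tau_p^{q_1^{(j)}}\tau_{q_1^{(j)}}^{q_2^{(j)}}\nabla f(q_2^{(j)})=\lim_j\nabla f(q_2^{(j)})\in\partial f(p)$ (your holonomy-to-identity observation), and concludes the existence of $\delta(p,\eta)$ by exactly the compactness/contradiction step you make explicit. No substantive difference; your version merely supplies the bookkeeping (Lipschitz bound, subsequence extraction, chart-independence) that the paper leaves implicit.
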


\begin{proof}
Take any two sequences $\{x_i\}$ of $x_i \in M$ and 
$\{y_i\}$ of $y_i \in M\setminus E_f$ both of which are convergent to the point $p$. 
If the limit $\lim_{i\to\infty}\nabla f (y_i)$ exists, then 
$\lim_{i\to\infty}\tau_p^{x_i}\tau_{x_i}^{y_i} \nabla f (y_i) 
=\lim_{i\to\infty} \nabla f (y_i)\in\partial f(p)$. 
This implies the existence of the positive number $\delta(p,\eta)$.$\qedd$
\end{proof}

\begin{lemma}\label{lemN10}
For any $\eta>0$, there exist numbers 
$\delta_1(p,\eta)>0$ and $\ve(p,p_i,\eta)>0$ such that
\[
\tau_p^q \nabla f_\ve^{(p_i) }  (q)\in \partial f(p)_\eta
\]
for all $q\in B_{\delta_1(p,\,\eta)}(p)\cap\supp\psi_i$, 
and for all $\ve\in(0,\ve(p,p_i,\eta))$.
\end{lemma}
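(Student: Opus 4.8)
The plan is to produce, for each admissible $q$ and $\ve$, a single vector $v^{(i)}_\ve\in T_pM$ that (i) already lies in $\partial f(p)$ and (ii) is within $\eta$ of $\tau_p^q\nabla f^{(p_i)}_\ve(q)$; together these give $\tau_p^q\nabla f^{(p_i)}_\ve(q)\in\partial f(p)_\eta$. The natural candidate is the vector $v^{(i)}_\ve$ of \eqref{2014_08_22_lemN9_1}: by Lemma \ref{lemN4}, applied to the bounded (hence locally $L^1$), almost-everywhere-defined map $y\mapsto\pi_p\bigl(\tau_p^q\tau_q^{q_i(y)}\nabla f(q_i(y))\bigr)$ with values in the compact convex set $\partial f(p)$, one gets $v^{(i)}_\ve\in\partial f(p)$, so (i) is free. (We may assume $\L(f)>0$, otherwise $f^{(p_i)}_\ve$ is constant and there is nothing to prove.) For (ii), the key observation is the isometry identity $(df^{(p_i)}_\ve)_q(\tilde u)=\langle\tau_p^q\nabla f^{(p_i)}_\ve(q),u\rangle$ with $u:=\tau_p^q\tilde u$: since $u$ sweeps out all of $\Sph^{n-1}_p$ as $\tilde u$ sweeps $\Sph^{n-1}_q$, it suffices to bound $\bigl|(df^{(p_i)}_\ve)_q(\tilde u)-\langle v^{(i)}_\ve,u\rangle\bigr|$ uniformly in $\tilde u$ by a quantity strictly less than $\eta$.

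I would control that quantity through the triangle inequality
\[
\bigl|(df^{(p_i)}_\ve)_q(\tilde u)-\langle v^{(i)}_\ve,u\rangle\bigr|
\le
\Bigl|(df^{(p_i)}_\ve)_q(\tilde u)-\textstyle\int_{\R^n}df_{q_i(y)}(\wt{U}_{q_i(y)})\rho_\ve(y)dy\Bigr|
+\Bigl|\textstyle\int_{\R^n}df_{q_i(y)}(\wt{U}_{q_i(y)})\rho_\ve(y)dy-\langle v^{(i)}_\ve,u\rangle\Bigr|.
\]
The first term is handled by Lemmas \ref{lemN7} and \ref{lemN8}: it is at most $\L(f)\sup_{y\in\B_\ve(o_{p_i})}\|Y^{(\tilde u)}_y(1)-\wt{U}_{q_i(y)}\|$, which is $<\eta/3$ uniformly in $q\in\supp\psi_i$ and $\tilde u\in\Sph^{n-1}_q$ as soon as $\ve\le\ve_i(\eta/(3\L(f)))$, the constant of Lemma \ref{lemN8}. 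The second term is bounded by Lemma \ref{lemN9} by $\int_{\R^n}\|(1-\pi_p)(\tau_p^q\tau_q^{q_i(y)}\nabla f(q_i(y)))\|\rho_\ve(y)dy$; here I would use that $\pi_p(v)$ is the nearest point of $\partial f(p)$ to $v$, so $\|(1-\pi_p)(v)\|=\dist(v,\partial f(p))$, together with Lemma \ref{lemN1}: fixing $\delta(p,\eta/3)>0$ from that lemma, every $y\in\B_\ve(o_{p_i})$ with $q_i(y)\in B_{\delta(p,\eta/3)}(p)\cap(M\setminus E_f)$ and $q\in B_{\delta(p,\eta/3)}(p)$ contributes an integrand $<\eta/3$, whence the whole integral is $<\eta/3$. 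Adding the two bounds yields $\|\tau_p^q\nabla f^{(p_i)}_\ve(q)-v^{(i)}_\ve\|\le 2\eta/3<\eta$, as desired.

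What remains — and where the bookkeeping lives — is to choose $\delta_1(p,\eta)$ and $\ve(p,p_i,\eta)$ so that the hypotheses above genuinely apply with $q_1=q$ and $q_2=q_i(y)$. I would take $\delta_1(p,\eta)\le\delta(p,\eta/3)/2$, shrunk further so that $B_{\delta_1(p,\eta)}(p)$ lies inside a strongly convex ball about $p$ (making every parallel transport $\tau_p^q$, $\tau_q^{q_i(y)}$ above well defined along a unique minimal geodesic), and then, using the Lipschitz estimate $d(q_i(y),q)\le\L(\exp_{p_i}|_{\B_{r_i}(o_{p_i})})\|y\|$ of \eqref{eq:N2.3} (valid for $\ve$ below the constant $\ve_i$ of Lemma \ref{lemN5}), I would require in addition $\ve<\delta(p,\eta/3)\bigl(2\,\L(\exp_{p_i}|_{\B_{r_i}(o_{p_i})})\bigr)^{-1}$; this forces $q_i(y)\in B_{\delta(p,\eta/3)}(p)$ for all $y\in\B_\ve(o_{p_i})$, while $q_i(y)\notin E_f$ automatically for a.e.\ such $y$ since $y\mapsto q_i(y)$ is a diffeomorphism onto its image and $E_f$ is null. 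Setting $\ve(p,p_i,\eta)$ to be the minimum of these constraints and $\ve_i(\eta/(3\L(f)))$ completes the argument. The main obstacle is exactly this uniformity: keeping $q_i(y)$ inside the ball where Lemma \ref{lemN1} bites for \emph{all} $y$ in the support of $\rho_\ve$ while simultaneously driving the Jacobi-field discrepancy of Lemma \ref{lemN8} below $\eta/(3\L(f))$, uniformly over $q\in\supp\psi_i$ and $\tilde u\in\Sph^{n-1}_q$ — compactness of $\supp\psi_i$ and of the unit spheres, already packaged into Lemma \ref{lemN8}, is what makes it work.
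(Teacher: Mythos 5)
Your proposal is correct and follows essentially the same route as the paper: the same decomposition via Lemmas \ref{lemN7}--\ref{lemN9}, the same use of Lemma \ref{lemN1} (after forcing $q_i(y)$ into $B_{\delta(p,\cdot)}(p)$ through the Lipschitz estimate \eqref{eq:N2.3} and a suitable bound on $\ve$), the same passage from the uniform bound over $\tilde u$ to the norm bound on $\tau_p^q\nabla f^{(p_i)}_\ve(q)-v^{(i)}_\ve$, and the same appeal to Lemma \ref{lemN4} to place $v^{(i)}_\ve$ in $\partial f(p)$. The only differences are cosmetic (an $\eta/3$ split instead of the paper's $\eta/2$, and a few extra technical remarks), so nothing further is needed.
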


\begin{proof}
It follows from Lemmas \ref{lemN7} and \ref{lemN8} that
\begin{equation}\label{eq:N4}
\left|
(df_\ve^{(p_i)})_q(\tilde u)-\int_{\R^n}df_{q_i(y)}(\wt{U}_{q_i(y)})\rho_\ve(y)dy
\right|
<\frac{\eta}{2}
\end{equation}
for all $\ve\in (0,\ve_i(\eta/2{\rm Lip}(f)))$, 
$q\in\supp\psi_i$, and $\tilde u\in\Sph_q^{n-1}$.
If $\ve$ is not greater than
\[
\min \left\{ 
r_i-\max_{q\in\supp\psi_i } \| \exp_{p_i}^{-1}q \|, \ \  \frac{\delta(p,\eta/2)}{2\L(\exp_{p_i}|_{\B_{r_i}(o_{p_i})})}
\right\},
\]
then
$\| \exp_{p_i}^{-1}q-y \| \le \| \exp_{p_i}^{-1}q \| + \|y\| < r_i$ 
for all $y\in\B_\ve(o_{p_i})$. 
Hence, 
\[
d(q_i(y),q) \le \L( \exp_{p_i}|_{\B_{r_i}(o_{p_i})} ) \|y \| 
< \frac{\delta(p,\eta/2)}{2}
\] 
for all $y\in\B_\ve(o_{p_i})$. 
Thus, we have 
\[
d(p,q_i(y))\leq d(p,q)+d(q,q_i(y))
<
\frac{\delta(p,\eta/2)}{2} +\frac{\delta(p,\eta/2)}{2}=\delta(p,\eta/2)
\]
for all $q\in\supp\psi_i\cap B_{\delta_1(p,\eta)}(p)$, 
where we set $\delta_1(p,\eta):=\delta(p,\eta/2)/2$. Let
\[
\ve(p,p_i,\eta)
:=
\min 
\left\{
\ve_i( \eta/2 \L(f) ), \ \ 
\frac{\delta_1(p,\eta)}{\L (\exp_{p_i}|_{\B_{r_i}(o_{p_i})})}, \ \ 
r_i-\max_{q\in\supp\psi_i}\|\exp_{p_i}^{-1}q\|
\right\}.
\]
From Lemma \ref{lemN1}, we get
\[
\| 
(1-\pi_p)(\tau_p^q\tau_q^{q_i(y)}\nabla f (q_i(y)))
\| 
<\frac{\eta}{2}
\] 
for all $q\in B_{\delta_1(p,\,\eta)}(p)$ and almost all $y\in\B_{\ve(p,\,p_i,\,\eta)}(o_{p_i})$. 
Therefore, by Lemma \ref{lemN9}, 
we obtain
\begin{equation}\label{eq:N7}
\left| 
\int_{\R^n}df_{q_i(y)}(\wt{U}_{q_i(y)})\rho_\ve(y)dy-\langle v^{(i)}_\ve,u\rangle
\right| 
<\frac{\eta}{2}
\end{equation}
for all $q\in B_{\delta_1(p,\,\eta)}(p)$ and all $\ve\in(0,\ve(p,p_i,\eta))$. 
By the triangle inequality and the equations \eqref{eq:N4}, \eqref{eq:N7}, 
we obtain
\begin{equation}\label{eq:N8}
|(df_\ve^{(p_i)})_q(\tilde u)-\langle v^{(i)}_\ve, u\rangle|<\eta
\end{equation}
for all $\tilde u\in\Sph^{n-1}_q$, $q\in\supp\psi_i\cap B_{\delta_1(p,\,\eta)}(p)$, 
and $\ve\in(0,\ve(p,p_i,\eta))$. Since 
$(df_{\ve}^{(p_i)})_q(\tilde u)=\langle\tau_p^q \nabla {f^{(p_i)}_\ve}(q), u\rangle$ and $\tilde{u}$ is arbitrarily chosen, we have, by \eqref{eq:N8},
\[
\|
\tau_p^q {\nabla {f_\ve}^{(p_i)}} (q)-v^{(i)}_\ve
\|
= 
\frac{
\left|\langle \tau_p^q \nabla {f^{(p_i)}_\ve}(q) -v^{(i)}_\ve, 
\tau_p^q \nabla {f^{(p_i)}_\ve}(q) -v^{(i)}_\ve\rangle\right|
}
{
\|
\tau_p^q {\nabla {f_\ve}^{(p_i)}} (q)-v^{(i)}_\ve
\|
}
<\eta
\] 
for all $q\in \supp\psi_i\cap B_{\delta_1(p,\,\eta)}(p)$ and $\ve\in(0,\ve(p,p_i,\eta))$. 
By Lemma \ref{lemN4} and \eqref{2014_08_22_lemN9_1},
$v_\ve^{(i)}\in \partial f(p)$. 
Hence, $\tau_p^q \nabla {f_\ve}^{(p_i)} (q) \in \partial f(p)_\eta$
for all $q\in \supp\psi_i\cap B_{\delta_1(p,\,\eta)}(p)$, and $\ve\in(0,\ve(p,p_i,\eta))$.
$\qedd$
\end{proof}

\begin{lemma}\label{lemN11}
For any $\eta>0$, 
there exist numbers $\delta_2(p,\eta)>0$ and $\ve(p,\eta)>0$ such that 
\[
\tau_p^q \nabla f_\ve (q) \in  \partial f(p)_\eta
\]
for all $q\in B_{\delta_2(p,\,\eta)}(p)$ and all $\ve\in(0,\ve(p,\eta))$.
\end{lemma}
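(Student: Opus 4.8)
The plan is to bootstrap from Lemma \ref{lemN10}, which already handles a single local mollification $f_\ve^{(p_i)}$, to the globally patched approximation $f_\ve = \sum_i \psi_i f_\ve^{(p_i)}$ via the partition of unity. First I would note that since $\{B_{r_i}(p_i)\}$ is a locally finite cover, only finitely many indices $i \in \{i_1,\dots,i_N\}$ satisfy $p \in \supp\psi_i$; fix a small enough radius $\rho(p) > 0$ so that $B_{\rho(p)}(p)$ meets $\supp\psi_i$ only for these indices. For each such index, apply Lemma \ref{lemN10} with the given $\eta$ to obtain $\delta_1(p,\eta)$ (which I may take to be the same for all the finitely many $i_j$, by shrinking) and $\ve(p, p_{i_j}, \eta)$. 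Then set
\[
\delta_2(p,\eta) := \min\{\rho(p), \delta_1(p,\eta)\},
\qquad
\ve(p,\eta) := \min_{1 \le j \le N}\ve(p, p_{i_j}, \eta).
\]

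The core computation is then a convexity argument on $\partial f(p)_\eta$. For $q \in B_{\delta_2(p,\eta)}(p)$ and $\ve \in (0, \ve(p,\eta))$, differentiate $f_\ve = \sum_j \psi_{i_j} f_\ve^{(p_{i_j})}$ at $q$: by the product rule,
\[
\nabla f_\ve(q) = \sum_{j} \psi_{i_j}(q)\, \nabla f_\ve^{(p_{i_j})}(q)
\;+\; \sum_{j} f_\ve^{(p_{i_j})}(q)\, \nabla \psi_{i_j}(q).
\]
The second sum is the obstacle I expect to be delicate: a priori $\sum_j f_\ve^{(p_{i_j})}(q)\nabla\psi_{i_j}(q)$ need not vanish. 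The fix is to use $\sum_j \psi_{i_j} \equiv 1$ near $q$, hence $\sum_j \nabla\psi_{i_j}(q) = o_q$, so that one may subtract off a constant: the second sum equals $\sum_j \bigl(f_\ve^{(p_{i_j})}(q) - f(q)\bigr)\nabla\psi_{i_j}(q)$. By Lemma \ref{lemN5} (applied on $\supp\psi_{i_j}$) each factor $|f_\ve^{(p_{i_j})}(q) - f(q)|$ is $\le \ve\,\L(f)\,\L(\exp_{p_{i_j}}|_{\B_{r_{i_j}}(o_{p_{i_j}})})$, so by shrinking $\ve(p,\eta)$ further (using the finitely many bounds on $\|\nabla\psi_{i_j}\|$ on the compact set $\ol{B_{\rho(p)}(p)}$) I can make $\bigl\|\sum_j (f_\ve^{(p_{i_j})}(q) - f(q))\nabla\psi_{i_j}(q)\bigr\| < \eta/2$.

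For the first sum, Lemma \ref{lemN10} gives $\tau_p^q \nabla f_\ve^{(p_{i_j})}(q) \in \partial f(p)_{\eta/2}$, i.e. $\tau_p^q\nabla f_\ve^{(p_{i_j})}(q) = w_j + e_j$ with $w_j \in \partial f(p)$ and $\|e_j\| < \eta/2$. Since $\psi_{i_j}(q) \ge 0$ and $\sum_j \psi_{i_j}(q) = 1$, the convex combination $\sum_j \psi_{i_j}(q) w_j$ lies in the convex set $\partial f(p)$, while $\bigl\|\sum_j \psi_{i_j}(q) e_j\bigr\| < \eta/2$. Transporting everything to $T_pM$ and combining the two estimates via the triangle inequality yields
\[
\tau_p^q \nabla f_\ve(q) \in \partial f(p)_{\eta/2 + \eta/2} = \partial f(p)_\eta
\]
for all $q \in B_{\delta_2(p,\eta)}(p)$ and $\ve \in (0, \ve(p,\eta))$, which is the claim. (To be careful about the slight abuse in transporting $\nabla f_\ve^{(p_{i_j})}(q)$ along possibly different minimal segments, one works inside a strongly convex ball containing $p$ and $q$, which exists after the initial shrinking of $\rho(p)$; this is routine.) The only genuine subtlety, then, is the cancellation of the derivative-of-partition terms, and that is resolved by the $\sum \nabla\psi_{i_j} = o$ trick together with the uniform $L^\infty$ bound from Lemma \ref{lemN5}.
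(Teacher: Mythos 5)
Your proposal is correct and follows essentially the same route as the paper: apply Lemma \ref{lemN10} with $\eta/2$ to each local approximation, use convexity of $\partial f(p)_{\eta/2}$ for the weighted sum $\sum_i\psi_i(q)\,\tau_p^q\nabla f_\ve^{(p_i)}(q)$, and control the partition-of-unity derivative term by the cancellation $\sum_i d\psi_i=0$ together with Lemma \ref{lemN5}, shrinking $\ve$ so that term is below $\eta/2$. The only cosmetic difference is that you argue directly with the gradient and the product rule, while the paper pairs with an arbitrary unit vector $\tilde u$ and converts to the norm estimate at the end; your worry about different transport segments is moot since every term is transported by the same $\tau_p^q$.
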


\begin{proof}
By Lemma \ref{lemN10}, $\tau_p^q \nabla f_\ve^{(p_i)}(q) \in \partial f(p)_{\eta/2}$ 
holds for all $q\in B_{\delta_1(p,\,\eta/2)}(p)\cap\supp\psi_i$ and $\ve \in(0,\ve(p,p_i,\eta/2))$. 
Since $\partial f(p)_{\eta/2}$ is convex,
\begin{equation}\label{eq:N10}
\sum_i\psi_i(q) \cdot \tau_p^q \nabla f_\ve^{(p_i)} (q)\in \partial f(p)_{\eta/2}
\end{equation}
for all $q\in B_{\delta_1(p,\,\eta/2)}(p)$ and $\ve\in(0,\ve_1(p,\eta))$, 
where
\[
\ve_1(p,\eta):=\min\{\ve (p,p_i,\eta/2)\; | \: B_{\delta_1(p,\,\eta/2)}(p)\cap\supp\psi_i\ne\emptyset\}.
\]
Fix any $q\in B_{\delta_2(p,\,\eta)}(p)$, 
where $\delta_2(p,\eta):=\delta_1(p,\eta/2)$, 
and take any $\tilde u \in \Sph^{n-1}_{q}$. Since $\sum_i\psi_i=1$, $\sum_id\psi_i(\tilde u)=0$ holds. 
Then, we get
\begin{equation}\label{eq:N11}
(df_\ve)_q(\tilde u)=\sum_i\psi_i(q)df_\ve^{(p_i)}(\tilde u)+\sum_id\psi_i(\tilde u)(f_\ve^{(p_i)}(q)-f(q))
\end{equation}
By Lemma \ref{lemN5}, 
\[
\left| 
\sum_id\psi_i(\tilde u) (f^{(p_i)}_\ve (q)-f(q)) 
\right| 
\le \ve \sum_i 
|d\psi_i(\tilde u)| \cdot \L(f) \cdot 
\L( \exp_{p_i} |_{\B_{r_i}(o_{p_i})})
\]
holds. Therefore,
there exists a number $\ve(p,\eta)\in(0,\ve_1(p,\eta)]$ such that
\begin{equation}\label{eq:N12}
\left| 
\sum_id\psi_i(\tilde u)(f^{(p_i)}_\ve(q)-f(q))
\right|
<\frac{\eta}{2}
\end{equation}
for all $\ve\in(0,\ve(p,\eta))$ and for all $q\in B_{\delta_2(p,\,\eta)}(p)$. 
By \eqref{eq:N11} and \eqref{eq:N12},
\[
\left| 
\langle \nabla f_\ve (q),\tilde u\rangle
- 
\left\langle\sum_i\psi_i(q) \nabla f_\ve^{(p_i)} (q),\tilde u \right\rangle
\right| 
<\frac{\eta}{2}.
\]
If we set $u:=\tau_p^q\tilde u$, then
\[
\left|\langle\tau_p^q \nabla f_\ve (q),u\rangle
-
\left\langle\sum_i\psi_i(q) \cdot \tau_p^q \nabla f_\ve^{(p_i)}
(q),u\right\rangle\right|
<\frac{\eta}{2}.
\] 
Since $u$ is any unit tangent vector,
\[
\left\| 
\tau_p^q \nabla f_\ve(q)
-\sum_i\psi_i(q) \cdot \tau_p^q \nabla f_\ve^{(p_i)} (q)
\right\|
<\frac{\eta}{2}.
\] 
Hence, from \eqref{eq:N10}, it follows that $\tau_p^q \nabla f_\ve(q)\in\partial f(p)_\eta$ for all $q\in B_{\delta_2(p,\,\eta)}(p)$ and all $\ve\in(0,\ve(p,\eta))$.$\qedd$
\end{proof}

\begin{lemma}\label{lemN2}
Let $A, B \subset \R^n\setminus \{o\}$ be compact sets such that $\angle(u,v)>\pi/2$ holds 
for all $u\in A$ and $v\in B$. 
Then, $o \notin \Conv(A)$ and $o\notin\Conv (B)$.  
Moreover, $\angle(u,v)>\pi/2$ also holds for all $u\in \Conv(A)$ and $v\in \Conv(B)$. 
\end{lemma}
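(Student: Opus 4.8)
The plan is to reduce everything to the bilinearity of the inner product, observing that $\angle(u,v)>\pi/2$ is by definition equivalent to the three conditions $u\neq o$, $v\neq o$, and $\langle u,v\rangle<0$. In particular the hypothesis of the lemma already forces $o\notin A$ and $o\notin B$, which we will upgrade to the corresponding statements for the convex hulls.

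First I would prove $o\notin\Conv(A)$, the argument for $\Conv(B)$ being identical. Suppose to the contrary that $o\in\Conv(A)$; then $o=\sum_{i=1}^{m}\lambda_i u_i$ for some $u_i\in A$ and $\lambda_i\ge 0$ with $\sum_i\lambda_i=1$. Pick any $v\in B$ (the lemma is applied with $A$ and $B$ nonempty). Pairing with $v$ gives $0=\langle o,v\rangle=\sum_i\lambda_i\langle u_i,v\rangle$, which is strictly negative because every $\langle u_i,v\rangle<0$ by hypothesis and at least one $\lambda_i$ is positive --- a contradiction. Hence $o\notin\Conv(A)$, and likewise $o\notin\Conv(B)$.

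For the ``moreover'' part, take $\tilde u\in\Conv(A)$ and $\tilde v\in\Conv(B)$ and write them as finite convex combinations $\tilde u=\sum_{i}\lambda_i u_i$, $\tilde v=\sum_{j}\mu_j v_j$ with $u_i\in A$, $v_j\in B$, $\lambda_i,\mu_j\ge 0$, $\sum_i\lambda_i=\sum_j\mu_j=1$. Bilinearity gives $\langle\tilde u,\tilde v\rangle=\sum_{i,j}\lambda_i\mu_j\langle u_i,v_j\rangle$, and since each $\langle u_i,v_j\rangle<0$ while $\sum_{i,j}\lambda_i\mu_j=1>0$, we conclude $\langle\tilde u,\tilde v\rangle<0$. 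Together with $\tilde u\neq o$ and $\tilde v\neq o$, which hold by the first part, this is precisely the assertion $\angle(\tilde u,\tilde v)>\pi/2$.

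There is no genuine obstacle here; the only point needing a little care is to recall that $x\in\Conv(S)$ means $x$ is a \emph{finite} convex combination of elements of $S$ (so no limiting process or Carath\'eodory bound is actually required), and that the non-degeneracy implicit in the definition of the angle for $\tilde u,\tilde v$ is supplied exactly by the two non-membership statements established first. Note that compactness of $A$ and $B$ plays no role in this lemma --- it is used elsewhere only to guarantee that $\Conv(A)$ and $\Conv(B)$ are themselves compact.
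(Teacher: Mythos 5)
Your proof is correct, but it takes a genuinely different route from the paper's. The paper never writes out convex combinations: for each fixed $u\in A$ it considers the open convex cone $V_u:=\{v\in\R^n\setminus\{o\}\,|\,\angle(u,v)>\pi/2\}$ (equivalently the open half-space $\{v\,|\,\langle u,v\rangle<0\}$), observes that $B\subset\bigcap_{u\in A}V_u$, and invokes the minimality of the convex hull among convex supersets to conclude $\Conv(B)\subset\bigcap_{u\in A}V_u$; this yields at once $o\notin\Conv(B)$ and the angle condition for $u\in A$, $v\in\Conv(B)$, and a second, symmetric pass (now intersecting the cones $V_v$ over $v\in\Conv(B)$) gives $o\notin\Conv(A)$ and the full statement. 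You instead use the convex-combination description of the hull together with bilinearity of the inner product, getting $\langle\tilde u,\tilde v\rangle=\sum_{i,j}\lambda_i\mu_j\langle u_i,v_j\rangle<0$ in one shot, and your separate argument that $o\notin\Conv(A)$ (pairing a putative representation of $o$ against any fixed $v\in B$) is likewise sound. The two arguments are essentially dual formulations of the same inequality $\langle u,v\rangle<0$: yours buys a one-step, fully explicit computation; the paper's buys a proof that never names coefficients and isolates the convexity of the cones as the structural reason the hull stays inside them. Both proofs implicitly require $A$ and $B$ to be nonempty (you flag this, and it holds where the lemma is applied, since there $A=\partial d_p(x)$ and $B=\partial d_q(x)$ are nonempty generalized gradients), and you are right that compactness plays no role in this lemma itself.
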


\begin{proof}
Take any $u\in A$, and fix it. Since $\angle (u, v) > \pi/2$ for all $v \in B$, $B$ is a subset 
of the open convex cone $V_u:=\{ v \in \R^n\setminus \{o\}\,|\, \angle (u,v) > \pi/2\}$. 
Thus, $B \subset \bigcap_{u \in A} V_u$. 
The set $\bigcap_{u\in A}V_u$ is convex, since each $V_u$ is convex. 
From the definition of the convex hull, $\Conv (B) \subset \bigcap_{u \in A} V_u$. 
Hence, we have proved that $o \not\in \Conv (B)$, and that 
$\angle(u,v) >\pi/2$ for all $u \in A$ and all $v \in \Conv(B)$. 
On the other hand, choose any $v \in \Conv (B)$, and fix it. 
Since $\angle(u,v) >\pi/2$ for all $u \in A$, $A$ is a subset of the open convex cone $V_v$. 
Then, $A \subset \bigcap_{v \in \Conv (B)}V_v$, 
and hence $\Conv (A) \subset \bigcap_{v \in \Conv (B)}V_v$. In particular, 
$o \not\in \Conv(A)$ and $\angle(u,v) >\pi/2$ 
for all $u \in \Conv(A)$ and all $v \in \Conv (B)$.$\qedd$ 
\end{proof}

\begin{lemma}\label{lemN3}
Let $f$ and $h$ be Lipschitz functions on $M$. 
Assume that $o\notin\partial f(x)$, $o\notin\partial h(x)$ 
on a compact set $K \subset M$. If $\angle (u,v)>\pi/2$ holds 
for all $x\in K$ and all $u\in\partial f(x)$, $v\in\partial h(x)$, 
then there exists a number $\eta_K>0$ such that, 
for any $x\in K$ and any $u\in \partial f(x)_{\eta_K}$, $v\in \partial h(x)_{\eta_K}$, 
$\angle (u,v)>\pi/2$ holds. 
\end{lemma}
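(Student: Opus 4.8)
The plan is to promote the pointwise strict inequality $\angle(u,v)>\pi/2$ to a \emph{uniform} one over the compact set $K$, and then to absorb $\eta_K$-perturbations into the resulting gap. I use throughout that, for nonzero vectors, $\angle(u,v)>\pi/2$ is equivalent to $\langle u,v\rangle<0$, so the goal becomes: produce $\eta_K>0$ with $\langle u,v\rangle<0$ for every $x\in K$, every $u\in\partial f(x)_{\eta_K}$ and every $v\in\partial h(x)_{\eta_K}$ (this in particular forces $u\ne o$ and $v\ne o$, so that the angle is defined).

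First I would record two facts about Clarke gradients. Since $f$ and $h$ are Lipschitz, every $u\in\partial f(x)$ satisfies $\|u\|\le\L(f)$ and every $v\in\partial h(x)$ satisfies $\|v\|\le\L(h)$, because each is a convex combination of limits of genuine gradients $\nabla f(x_i)$, $\nabla h(x_i)$, whose norms are bounded by the Lipschitz constants. Moreover the set-valued maps $x\mapsto\partial f(x)$ and $x\mapsto\partial h(x)$ have closed graphs (upper semicontinuity of the generalized gradient, immediate from \eqref{2015_08_02_Clarke_1}). Hence
\[
\Omega:=\{(x,u,v)\,|\,x\in K,\ u\in\partial f(x),\ v\in\partial h(x)\}
\]
is closed and is contained in the set of triples $(x,u,v)$ with $x\in K$, $u,v\in T_xM$, $\|u\|\le\L(f)$ and $\|v\|\le\L(h)$, which is compact; therefore $\Omega$ is compact. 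The function $(x,u,v)\mapsto\langle u,v\rangle$ is continuous on $\Omega$ and, by the hypothesis of the lemma, strictly negative at every point of $\Omega$, so it attains a negative maximum $-\mu$ with $\mu>0$. This $\mu$ is the uniform angle gap.

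Finally I would carry out the perturbation estimate. If $u\in\partial f(x)_{\eta}$ and $v\in\partial h(x)_{\eta}$, write $u=u_0+w$, $v=v_0+z$ with $u_0\in\partial f(x)$, $v_0\in\partial h(x)$, $\|w\|<\eta$ and $\|z\|<\eta$; then
\[
\langle u,v\rangle=\langle u_0,v_0\rangle+\langle u_0,z\rangle+\langle w,v_0\rangle+\langle w,z\rangle\le-\mu+(\L(f)+\L(h))\,\eta+\eta^2 .
\]
Choosing $\eta_K>0$ so small that $(\L(f)+\L(h))\,\eta_K+\eta_K^2<\mu$ yields $\langle u,v\rangle<0$, hence $\angle(u,v)>\pi/2$, for all $x\in K$ and all $u\in\partial f(x)_{\eta_K}$, $v\in\partial h(x)_{\eta_K}$, which is exactly the claim. (One could instead argue by contradiction: if no such $\eta_K$ existed, there would be $x_n\to x_\infty\in K$ and $u_n\in\partial f(x_n)_{1/n}$, $v_n\in\partial h(x_n)_{1/n}$ with $\langle u_n,v_n\rangle\ge0$; passing to convergent subsequences and using the closed graph of the gradient maps gives $u_\infty\in\partial f(x_\infty)$, $v_\infty\in\partial h(x_\infty)$ with $\langle u_\infty,v_\infty\rangle\ge0$, contradicting the hypothesis.) The only step needing genuine care — and the crux of the whole argument — is the compactness of $\Omega$, i.e.\ the closed-graph property of the Clarke gradient together with the a priori bound $\|u\|\le\L(f)$; once that is established the pointwise strict inequality becomes uniform for free, and everything else reduces to the elementary estimate above.
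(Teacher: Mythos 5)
Your proof is correct, and it differs from the paper's in execution rather than in substance. The paper argues by contradiction: if no $\eta_K$ worked, it extracts sequences $x_i\in K$, $u_i\in\partial f(x_i)$, $v_i\in\partial h(x_i)$ with $\angle(u_i,v_i)<\pi/2+1/i$, passes to a convergent subsequence $x_i\to x\in K$, parallel-transports $u_i,v_i$ to $T_xM$, and uses the closed-graph (upper semicontinuity) property of the Clarke gradient to land limits in $\partial f(x)$, $\partial h(x)$ with angle $\le\pi/2$, contradicting the hypothesis. You instead make the gap quantitative: you compactify the combined graph $\Omega=\{(x,u,v):x\in K,\ u\in\partial f(x),\ v\in\partial h(x)\}$ inside $TM\oplus TM$ (using the a priori bounds $\|u\|\le\L(f)$, $\|v\|\le\L(h)$ and the same closed-graph property the paper invokes), extract a uniform bound $\langle u,v\rangle\le-\mu<0$, and then absorb the $\eta$-perturbations by the explicit estimate $\langle u,v\rangle\le-\mu+(\L(f)+\L(h))\eta+\eta^2$. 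What your route buys is an explicit choice of $\eta_K$ and a cleaner handling of the passage from $\partial f(x)$ to its $\eta$-neighborhood $\partial f(x)_\eta$ (your decomposition $u=u_0+w$), a step the paper's contradiction argument treats only implicitly when it replaces elements of $\partial f(x_i)_{1/i}$ by elements of $\partial f(x_i)$ at the cost of the relaxed angle bound $\pi/2+1/i$; what the paper's route buys is brevity, since no norm bounds or inner-product estimates need to be written down. The one point to state carefully in your version is that on a manifold the closedness of $\Omega$ (equivalently, the closed graph of $x\mapsto\partial f(x)$ as a subset of $TM$) is checked in charts or via parallel transport, which is precisely the role the transports $\tau_x^{x_i}$ play in the paper's proof; you correctly identify this as the crux, and your parenthetical contradiction argument is essentially the paper's proof verbatim.
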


\begin{proof}
By supposing that the conclusion is false, we will get a contradiction. 
Then, for each positive integer $i$, there exist a point $x_i \in K$, 
$u_i \in \partial f (x_i)$, and $v_i \in \partial h (x_i)$ 
such that $\angle (u_i, v_i) < \pi/2 + 1/i$. 
Since $K$ is compact, we can assume, 
taking a subsequence, if necessary, 
that $x:= \lim_{i\to\infty} x_i \in K$ exists. By definition, 
any limits of the sequences $\{\tau_x^{x_i} u_i\}$ and 
$\{\tau_x^{x_i} v_i\}$ are elements of $\partial f(x)$ and $\partial h(x)$, respectively. 
Since $\angle (\tau_x^{x_i} u_i, \tau_x^{x_i} v_i) = \angle (u_i, v_i) < \pi/2 + 1/i$ 
for each $i$, we get $\angle (u, v) \le \pi/2$ for some $u \in \partial f(x)$ 
and $v \in \partial h(x)$. This is a contradiction.$\qedd$
\end{proof}

\begin{lemma}\label{lemN12}
Let $K\subset M$ be a compact set. 
Assume that there exist two distinct points $p, q \in M\setminus K$ 
such that for any minimal geodesic segments $\alpha$ and $\beta$ 
emanating from each point $x\in K$ to $p$ and $q$, respectively, 
$\angle (\dot{\alpha}(0), \dot{\beta}(0))>\pi/2$ holds. 
Then, for any sufficiently small $\ve>0$, 
$\angle (\nabla (d_p)_\ve, \nabla (d_q)_\ve) > \pi/2$ holds on $K$, where $d_p(x):= d(p, x)$ for all $x \in M$.
\end{lemma}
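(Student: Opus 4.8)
The plan is to convert the angle hypothesis at points of $K$ into the corresponding statement for the generalized gradients $\partial d_p$, $\partial d_q$, feed it into Lemma~\ref{lemN3} to obtain a uniformly ``thickened'' version, and then transfer the estimate to the smooth gradients $\nabla(d_p)_\ve$, $\nabla(d_q)_\ve$ via Lemma~\ref{lemN11} together with a covering of $K$. So fix $x\in K$ and let $A(x),B(x)\subset\Sph^{n-1}_x$ be the sets of initial velocities $\dot{\alpha}(0)$, $\dot{\beta}(0)$ of unit-speed minimal geodesic segments from $x$ to $p$ and from $x$ to $q$, respectively; both are compact, since a $C^0$-limit of minimal segments from $x$ to a fixed point is again such a segment. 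The hypothesis reads $\angle(a,b)>\pi/2$ for all $a\in A(x)$ and $b\in B(x)$, so Lemma~\ref{lemN2} yields $o\notin\Conv(A(x))$, $o\notin\Conv(B(x))$, and $\angle(a,b)>\pi/2$ for all $a\in\Conv(A(x))$, $b\in\Conv(B(x))$. On the other hand, at any differentiability point $z\ne p$ of $d_p$ one has $\nabla d_p(z)=-\dot{\gamma}(0)$ for the unique minimal segment $\gamma$ from $z$ to $p$; letting $z\to x$ and passing to a convergent subsequence of such segments shows every limit $\lim_i\nabla d_p(z_i)$ lies in $-A(x)$, hence $\partial d_p(x)\subset-\Conv(A(x))$, and similarly $\partial d_q(x)\subset-\Conv(B(x))$. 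Since $\angle(-a,-b)=\angle(a,b)$, we conclude that $o\notin\partial d_p(x)$, $o\notin\partial d_q(x)$ and $\angle(u,v)>\pi/2$ for all $u\in\partial d_p(x)$, $v\in\partial d_q(x)$, for every $x\in K$.

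This is exactly the hypothesis of Lemma~\ref{lemN3} with $f=d_p$, $h=d_q$; it supplies a number $\eta_K>0$ such that $\angle(u,v)>\pi/2$ for every $x\in K$ and all $u\in\partial d_p(x)_{\eta_K}$, $v\in\partial d_q(x)_{\eta_K}$. Now, for each $x\in K$, apply Lemma~\ref{lemN11} to $d_p$ and to $d_q$ with parameter $\eta_K$ and take the smaller radius and the smaller threshold: there exist $\delta(x)>0$, $\ve(x)>0$ with $\tau_x^y\nabla(d_p)_\ve(y)\in\partial d_p(x)_{\eta_K}$ and $\tau_x^y\nabla(d_q)_\ve(y)\in\partial d_q(x)_{\eta_K}$ for all $y\in B_{\delta(x)}(x)$ and $\ve\in(0,\ve(x))$. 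Cover $K$ by finitely many balls $B_{\delta(x_1)}(x_1),\dots,B_{\delta(x_N)}(x_N)$ with $x_j\in K$ and set $\ve_0:=\min_{1\le j\le N}\ve(x_j)>0$. Given $\ve\in(0,\ve_0)$ and $y\in K$, pick $j$ with $y\in B_{\delta(x_j)}(x_j)$; then $\tau_{x_j}^y\nabla(d_p)_\ve(y)\in\partial d_p(x_j)_{\eta_K}$ and $\tau_{x_j}^y\nabla(d_q)_\ve(y)\in\partial d_q(x_j)_{\eta_K}$, so Lemma~\ref{lemN3} (at the point $x_j\in K$) gives
\[
\angle\bigl(\tau_{x_j}^y\nabla(d_p)_\ve(y),\ \tau_{x_j}^y\nabla(d_q)_\ve(y)\bigr)>\frac{\pi}{2}.
\]
Since parallel transport is a linear isometry it preserves angles, so $\angle(\nabla(d_p)_\ve(y),\nabla(d_q)_\ve(y))>\pi/2$; as $y\in K$ was arbitrary, the lemma holds for all $\ve\in(0,\ve_0)$.

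The main obstacle is the passage to a uniform $\ve$: Lemma~\ref{lemN11} only controls $\nabla(d_p)_\ve$ in a neighborhood of one base point and with a threshold depending on that point, so one is forced to cover $K$ and then compare the two smoothed gradients at a cover center $x_j$ rather than at the given point $y$; this comparison is legitimate only because the parallel transport $\tau_{x_j}^y$ is angle-preserving. The identification $\partial d_p(x)\subset-\Conv(A(x))$ --- i.e.\ that limits of $\nabla d_p$ at differentiability points are negatives of initial velocities of minimal segments --- is the other ingredient, but it is standard (compactness of the space of minimal geodesics from $x$ to a fixed point, as in Example~\ref{2016_03_25_exa_GS}).
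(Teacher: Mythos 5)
Your proposal is correct and follows essentially the same route as the paper: Lemma \ref{lemN2} to pass from initial velocities of minimal segments to the generalized gradients $\partial d_p(x)$, $\partial d_q(x)$, Lemma \ref{lemN3} to get the uniform thickening $\eta_K$, Lemma \ref{lemN11} at each point, and a compactness/covering argument plus angle-preservation of parallel transport to obtain a uniform $\ve$. The only difference is that you spell out the inclusion $\partial d_p(x)\subset-\Conv(A(x))$, which the paper leaves implicit (cf.\ Example \ref{2016_03_25_exa_GS}).
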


\begin{proof}
Let $x$ be any point of $K$. 
It follows from Lemma \ref{lemN2}
that $o\notin\partial d_p(x)\cup\partial d_q(x)$ and $\angle(u,v) > \pi/2$ holds for all $u\in\partial d_p(x)$ and $v\in\partial d_q(x)$. 
Then, by Lemma \ref{lemN3},
there exists a positive number $\eta_K$ such that, 
for any $x\in K$, $u\in \partial d_p(x)_{\eta_K}$, 
and $v\in\partial d_q(x)_{\eta_K}$, $\angle(u,v) > \pi/2$ holds again. 
It follows from Lemma \ref{lemN11} that, for each point $x\in K$, 
there exist two numbers $\delta_2(x,\eta_K)>0$ and $\ve(x,\eta_K)>0$ such that 
$\tau_x^{q_1} \nabla (d_p)_\ve (q_1) \in \partial d_p(x)_{\eta_K}$ and 
$\tau_x^{q_1}\nabla (d_q)_\ve (q_1) \in \partial d_q(x)_{\eta_K}$
for all $q_1\in B_{\delta_2(x,\,\eta_K)}(x)$ and $\ve\in(0,\ve(x,\eta_K))$. 
Since $\angle (\nabla ({d_p})_\ve (q_1), \nabla ({d_q})_\ve (q_1))
= \angle (\tau_x^{q_1} \nabla (d_p)_\ve (q_1), \tau_x^{q_1}\nabla (d_q)_\ve (q_1) )$, it follows from Lemma \ref{lemN3} that 
$\angle (\nabla ({d_p})_\ve (q_1), \nabla ({d_q})_\ve (q_1))> \pi/2$ 
for all $q_1\in B_{\delta_2(x,\,\eta_K)}(x)$ and all $\ve\in(0,\ve(x,\eta_K))$. 
This implies that, for any sufficiently small $\ve>0$, 
$\angle (\nabla (d_p)_\ve, \nabla (d_q)_\ve) > \pi/2$ holds on $K$ 
since $K$ is compact.$\qedd$
\end{proof}

\subsection{Approximations of Lipschitz maps: Proof of Theorem \ref{2015_07_30_theorem2}}\label{2015_08_26_Sect3.2}
%%%%%%%%%%%%%%%%%%%%%%%%%%%%%%%%%%%%%%%%%%%%%%%%%%%%%%%%%%%%%%%%%%%%%%%%%%%%%%%%%%%%%%%%%%%%%%%%%

In this subsection, using some techniques from 
non-smooth analysis and the partition of unity, 
we first treat an approximation of a Lipschitz map $F$
from a compact Riemannian manifold $M$ of dimension $n$ 
into a Riemannian manifold $N$ of dimension $k$, 
and finally prove Theorem \ref{2015_07_30_theorem2} 
applying the approximation method.\par 
Let $N$ be embedded into Euclidean $m$-dimensional space $(\R^m,\langle\,\cdot\,,\,\cdot\,\rangle)$, 
where $m \ge k+1$. 
We may assume that $N$ is isometrically 
embedded into $\R^m$ by introducing the induced metric from the space. 
Here, note that the notion of the singular point of a Lipschitz map is independent of 
the choice of the Riemannian metric (See \eqref{2015_08_02_Clarke_1} and Definition \ref{2014_02_20_def_1}). The Lipschitz map $F$ is therefore a map from $M$ into $\R^m$.
Then, we may define a smooth approximation of $F$ 
on a convex ball $B_r(p)$ of radius $r$, 
centered at each point $p \in M$.

\begin{definition}\label{defA2}
For each $\ve >0$, 
let $F^{(p)}_\ve: B_r(p) \lra \R^m$
denote the map defined by
\begin{equation}\label{eq:B1}
F^{(p)}_\ve (q):=\int _{\R^n} \rho_\ve (y) F(q(y)) dy=\int_{\R^n}\rho_\ve 
(\exp^{-1}_p q-y)F(\exp_p(y))dy,
\end{equation}
where $q(y):=\exp_p(\exp^{-1}_p q-y)$ and $\rho_\ve$ denotes the mollifier.
\end{definition}

\begin{lemma}\label{lemB2}
For any $\ve>0$ and any $q\in B_r(p)$, 
\[
\|F^{(p)}_\ve (q)-F(q)\|\le \ve \cdot \L(F)\cdot \L(\exp_p|_{\B_{r+\ve}(o_p)})
\]
holds, 
where $\|\,\cdot\, \|$ denotes the Euclidean norm of $\R^m$.
\end{lemma}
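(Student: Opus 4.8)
The plan is to bound $\|F^{(p)}_\ve(q)-F(q)\|$ directly from the integral representation \eqref{eq:B1}, exactly parallel to the proof of Lemma \ref{lemN5} but keeping track of the fact that now the target is $\R^m$ (with its Euclidean norm) rather than $\R$. First I would use the normalization $\int_{\R^n}\rho_\ve(y)\,dy=1$ to write $F(q)=\int_{\|y\|<\ve}F(q)\rho_\ve(y)\,dy$, so that
\[
\|F^{(p)}_\ve(q)-F(q)\|=\left\|\int_{\|y\|<\ve}\bigl(F(q(y))-F(q)\bigr)\rho_\ve(y)\,dy\right\|\le\int_{\|y\|<\ve}\|F(q(y))-F(q)\|\,\rho_\ve(y)\,dy,
\]
using $\rho_\ve\ge0$ and that $\rho_\ve$ is supported in $\{\|y\|<\ve\}$. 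Since $F$ is Lipschitz with constant $\L(F)$ measured against $d_M$, the integrand is at most $\L(F)\,d(q(y),q)$.

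Next I would estimate $d(q(y),q)$. By definition $q=\exp_p(\exp_p^{-1}q)$ and $q(y)=\exp_p(\exp_p^{-1}q-y)$, and for $\|y\|<\ve$ both points $\exp_p^{-1}q-y$ and $\exp_p^{-1}q$ lie in $\B_{r+\ve}(o_p)$ (since $\exp_p^{-1}q\in\B_r(o_p)$, so $\|\exp_p^{-1}q-y\|<r+\ve$). Hence
\[
d(q(y),q)\le\L\bigl(\exp_p|_{\B_{r+\ve}(o_p)}\bigr)\,\bigl\|(\exp_p^{-1}q-y)-\exp_p^{-1}q\bigr\|=\L\bigl(\exp_p|_{\B_{r+\ve}(o_p)}\bigr)\,\|y\|<\ve\cdot\L\bigl(\exp_p|_{\B_{r+\ve}(o_p)}\bigr).
\]
Substituting this back into the integral and using $\int_{\|y\|<\ve}\rho_\ve(y)\,dy=1$ gives the claimed bound $\|F^{(p)}_\ve(q)-F(q)\|\le\ve\cdot\L(F)\cdot\L(\exp_p|_{\B_{r+\ve}(o_p)})$.

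There is no real obstacle here; the one point that needs a little care — and the only genuine difference from Lemma \ref{lemN5} — is that the first inequality moves the $\R^m$-norm inside the integral. This is just the standard fact that $\|\int g(y)\rho_\ve(y)\,dy\|\le\int\|g(y)\|\rho_\ve(y)\,dy$ for an $\R^m$-valued integrand against a nonnegative weight (a vector-valued triangle inequality, or apply the scalar estimate componentwise after pairing with an arbitrary unit vector). I would state this step explicitly, noting that $q(y)$ need not lie in the chart's convex ball but does lie in $N\subset\R^m$, so $\|F(q(y))-F(q)\|$ is a well-defined Euclidean distance and is still controlled by $\L(F)\,d_M(q(y),q)$ because $N$ carries the induced metric and $F$ was assumed Lipschitz as a map into $N$. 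The rest is the verbatim transcription of Lemma \ref{lemN5}'s computation, with the radius $r_i$ replaced by $r+\ve$ to accommodate the shift by $y$.
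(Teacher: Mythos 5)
Your proposal is correct and is essentially the paper's own argument: the paper proves Lemma \ref{lemB2} simply by invoking the computation of Lemma \ref{lemN5}, which is exactly the mollifier-plus-Lipschitz estimate you carry out, with the vector-valued triangle inequality and the enlarged ball $\B_{r+\ve}(o_p)$ handled just as you describe. No gaps; your explicit attention to moving the $\R^m$-norm inside the integral is a fair elaboration of what the paper leaves implicit.
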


\begin{proof}
After the fashion of the proof of Lemma \ref{lemN5}, we have the desired inequality. 
$\qedd$
\end{proof}

Since $M$ is compact, we can choose  finitely many convex balls 
$B_{r_i}(p_i)$, $i=1,2, \dots, \ell$, which cover $M$. Take a partition of unity $\varphi_i$ subordinate to $\{ B_{r_i}(p_i) \}$. Then, for each $\ve>0$, we define the global approximation $F_\ve$ of $F$ by
\begin{equation}\label{eq:B2}
F_\ve(q)=\sum_{i=1}^{\ell} \varphi_i(q)F^{(p_i)}_\ve(q).
\end{equation}

\begin{lemma}\label{lemB3}
For any $\ve>0$ and any $q\in M$, we have 
\[
\|F_\ve(q)-F(q)\| \le \ve \cdot \L(F) \sum_{i=1}^\ell \varphi_i(q) \L(\exp_{p_i}|_{\B_{r_i+\ve}(o_{p_i})}).
\]
\end{lemma}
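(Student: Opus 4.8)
The plan is to reduce the global estimate to the local one already proved in Lemma \ref{lemB2}, using that $\{\varphi_i\}_{i=1}^\ell$ is a partition of unity subordinate to $\{B_{r_i}(p_i)\}$. Fix $q\in M$. Since $\sum_{i=1}^\ell\varphi_i(q)=1$, I would first rewrite, from the definition \eqref{eq:B2} of $F_\ve$,
\[
F_\ve(q)-F(q)=\sum_{i=1}^\ell\varphi_i(q)F^{(p_i)}_\ve(q)-\Bigl(\sum_{i=1}^\ell\varphi_i(q)\Bigr)F(q)=\sum_{i=1}^\ell\varphi_i(q)\bigl(F^{(p_i)}_\ve(q)-F(q)\bigr).
\]
Here one should keep in mind that a summand with $\varphi_i(q)=0$ contributes nothing even though $F^{(p_i)}_\ve(q)$ may be undefined (when $q\notin B_{r_i}(p_i)$); and whenever $\varphi_i(q)\neq0$ we have $q\in\supp\varphi_i\subset B_{r_i}(p_i)$, so $F^{(p_i)}_\ve(q)$ is well-defined and Lemma \ref{lemB2} is applicable at $q$ with center $p_i$ and radius $r_i$.

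Next I would take Euclidean norms, apply the triangle inequality, and use $\varphi_i(q)\ge0$ to obtain
\[
\|F_\ve(q)-F(q)\|\le\sum_{i=1}^\ell\varphi_i(q)\,\|F^{(p_i)}_\ve(q)-F(q)\|.
\]
For each index $i$ with $\varphi_i(q)\neq0$, Lemma \ref{lemB2} gives $\|F^{(p_i)}_\ve(q)-F(q)\|\le\ve\cdot\L(F)\cdot\L(\exp_{p_i}|_{\B_{r_i+\ve}(o_{p_i})})$, while the remaining summands vanish; substituting this bound into the displayed inequality yields exactly the claimed estimate.

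There is essentially no hard step: the argument is just the partition-of-unity identity, the triangle inequality, and non-negativity of the $\varphi_i$. The only point deserving a moment's care is the bookkeeping about domains — reading the product $\varphi_i(q)F^{(p_i)}_\ve(q)$ in \eqref{eq:B2} as extended by zero outside $B_{r_i}(p_i)$ — so that the rearrangement above is legitimate and Lemma \ref{lemB2} is invoked only on the region where it holds. This is the same passage from a local mollified approximation to a globally patched one, via the triangle inequality, that was carried out for functions in the transition from Lemma \ref{lemN5} to the later lemmas.
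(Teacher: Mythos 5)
Your argument is correct and is essentially the paper's proof: write $F(q)=\sum_{i=1}^\ell\varphi_i(q)F(q)$ using the partition of unity, apply the triangle inequality, and invoke Lemma \ref{lemB2} termwise. The extra remark about summands with $\varphi_i(q)=0$ being read as zero is a sensible clarification but not a departure from the paper's route.
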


\begin{proof}
Since $\sum_{i=1}^\ell \varphi_i=1$, 
we get $F(q)=\sum_{i=1}^\ell\varphi_i(q)F(q)$. Hence, by Lemma \ref{lemB2} and the triangle inequality, we have the desired inequality.$\qedd$
\end{proof}

In what follows, for a pair of points $p$ and $q$ of $M$ or $N$ admitting a unique minimal geodesic segment $\gamma$, we denote by 
$\tau_q^p$ the parallel transport from the tangent space at $p$ onto the tangent space at $q$ along $\gamma$.

\begin{lemma}\label{lemB4}Let $q\in\supp\varphi_i$. 
Then, for any $\tilde u \in \Sph^{n-1}_q :=\{v\in T_qM\,|\, \|v\|=1\}$, 
\begin{equation}\label{eq:B4}
\|(dF_\ve^{(p_i)})_q(\tilde u)\| \le \L(F) 
\big(1+\sup_{y\in \B _{\ve}(o_{p_i}) }  \|Y_y^{(\tilde u)}(1)-\tau_{q_i(y)}^q(\tilde u)\| \big),
\end{equation}
holds, where $q_i(y):=\exp_{p_i}(\exp_{p_i}^{-1} q-y)$, 
and 
$
Y_y^{(\tilde u)}(t):=
\frac{\partial}{\partial s}
\exp_{p_i}t\left(\exp_{p_i}^{-1}(\exp_qs\tilde u)-y\right)\big|_{s = 0}
$ 
is a Jacobi field along the geodesic 
$\exp_{p_i} t\left(\exp_{p_i}^{-1} q-y\right)$ for each $y$.
\end{lemma}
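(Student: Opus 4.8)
The plan is to estimate $\|(dF_\ve^{(p_i)})_q(\tilde u)\|$ by differentiating the defining integral \eqref{eq:B1} under the integral sign, exactly paralleling the computation of Lemma \ref{lemN6}. First I would note that, since $F_\ve^{(p_i)}(\exp_q s\tilde u)=\int_{\R^n}\rho_\ve(y)F\big(\exp_{p_i}(\exp_{p_i}^{-1}(\exp_q s\tilde u)-y)\big)\,dy$, differentiating at $s=0$ gives
\[
(dF_\ve^{(p_i)})_q(\tilde u)=\int_{\R^n}\rho_\ve(y)\,dF_{q_i(y)}\big(Y_y^{(\tilde u)}(1)\big)\,dy,
\]
where $Y_y^{(\tilde u)}$ is the stated Jacobi field; here one uses that $dF$ exists almost everywhere (Rademacher) and that the differentiation-under-the-integral is justified because the inner integrand is Lipschitz in $s$ uniformly in $y$ on $\supp\rho_\ve$. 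This is the same manipulation already carried out in the function case, so I would simply write ``after the fashion of the proof of Lemma \ref{lemN6}'' and record the identity.

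Next I would split $Y_y^{(\tilde u)}(1)=\tau_{q_i(y)}^q(\tilde u)+\big(Y_y^{(\tilde u)}(1)-\tau_{q_i(y)}^q(\tilde u)\big)$ and apply the triangle inequality inside the integral. For the parallel-transport term, $dF_{q_i(y)}\big(\tau_{q_i(y)}^q(\tilde u)\big)$ has norm at most $\L(F)$ since parallel transport is a linear isometry and $\|dF_x\|\le\L(F)$ wherever $dF_x$ exists; integrating against $\rho_\ve$ (a probability density) keeps the bound $\L(F)$. For the remainder term, $\|dF_{q_i(y)}\big(Y_y^{(\tilde u)}(1)-\tau_{q_i(y)}^q(\tilde u)\big)\|\le\L(F)\,\|Y_y^{(\tilde u)}(1)-\tau_{q_i(y)}^q(\tilde u)\|$, and on $\supp\rho_\ve\subset\B_\ve(o_{p_i})$ this is at most $\L(F)\sup_{y\in\B_\ve(o_{p_i})}\|Y_y^{(\tilde u)}(1)-\tau_{q_i(y)}^q(\tilde u)\|$. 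Combining the two contributions yields \eqref{eq:B4}.

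The only genuine subtlety, and the step I would be most careful about, is the justification of differentiating under the integral sign when $F$ is merely Lipschitz: one cannot differentiate $F$ pointwise, but one can use that $s\mapsto F(q_i^{(s)}(y))$ (with $q_i^{(s)}(y):=\exp_{p_i}(\exp_{p_i}^{-1}(\exp_q s\tilde u)-y)$) is Lipschitz in $s$ with constant bounded uniformly in $y\in\supp\rho_\ve$ by $\L(F)$ times a bound on the Jacobi fields, so dominated convergence applies to the difference quotients and the limit is $dF_{q_i(y)}(Y_y^{(\tilde u)}(1))$ for a.e.\ $y$; the exceptional set may depend on $s$, but a standard Fubini argument (as in the treatment of $\partial F$ via Rademacher that the paper already invokes) shows the formula holds for a.e.\ direction, which suffices since $F_\ve^{(p_i)}$ is smooth and both sides are continuous in $\tilde u$. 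Everything else is a routine repetition of Lemmas \ref{lemN6}--\ref{lemN7}, so the proof can be kept to one or two lines by citing those.
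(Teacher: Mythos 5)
Your proposal is correct and follows essentially the same route as the paper: establish the identity $(dF_\ve^{(p_i)})_q(\tilde u)=\int_{\R^n}\rho_\ve(y)\,dF_{q_i(y)}(Y_y^{(\tilde u)}(1))\,dy$ (the paper's \eqref{eq:B5}, obtained ``from \eqref{eq:B1}'' just as in Lemma \ref{lemN6}), then split $Y_y^{(\tilde u)}(1)$ into $\tau_{q_i(y)}^q(\tilde u)$ plus the remainder and apply the triangle inequality with $\|dF\|\le \L(F)$ a.e. and $\int\rho_\ve=1$. The extra care you take in justifying differentiation under the integral (a.e.\ existence of $dF$ at $q_i(y)$ plus dominated convergence for the difference quotients) is exactly the detail the paper suppresses as ``easy to obtain,'' so there is no discrepancy.
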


\begin{proof}
From \eqref{eq:B1}, it is easy to obtain
\begin{equation}\label{eq:B5}
(dF_\ve^{(p_i)})_q (\tilde u)=\int _{\R^n}\rho_\ve(y)dF_{q_i(y)}(Y_y^{(\tilde u)}(1))dy
\end{equation}
and 
$
dF_{q_i(y)}(Y_y^{(\tilde u)}(1))
=
dF_{q_i(y)}(Y_y^{(\tilde u)}(1)-\tau^q_{q_i(y)}(\tilde u))
+dF_{q_i(y)}(\tau^q_{q_i(y)}(\tilde u))
$. Hence, by the triangle inequality, we get \eqref{eq:B4}.$\qedd$
\end{proof}

\begin{lemma}\label{lemB5}
For any $\eta>0$, there exists a number $\ve_i(\eta)>0$ such that
\[
\sup\{ \|Y_y^{(\tilde u)}(1)-\tau_{q_i(y)}^{q}(\tilde u)\| \,|\, q\in\supp\varphi_i,\,\tilde u\in \Sph_q^{n-1},\,y\in \B_{\ve_i(\eta)} (o_{p_i}) \} <\eta.
\]
\end{lemma}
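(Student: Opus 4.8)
The plan is to follow the proof of Lemma \ref{lemN8} essentially verbatim, since $\tau_{q_i(y)}^q(\tilde u)$ is exactly the vector $\wt{U}_{q_i(y)}$ appearing there and the Jacobi field $Y_y^{(\tilde u)}$ is unchanged. First I would observe that the quantity in question vanishes at $y=o_{p_i}$: there $q_i(o_{p_i})=q$, the geodesic $t\mapsto\exp_{p_i}t(\exp_{p_i}^{-1}q-y)$ reduces to the minimal segment from $p_i$ to $q$, and
\[
Y^{(\tilde u)}_{o_{p_i}}(1)=\frac{\partial}{\partial s}\exp_q s\tilde u\Big|_{s=0}=\tilde u=\tau^q_q(\tilde u)=\tau^q_{q_i(o_{p_i})}(\tilde u),
\]
so $\|Y^{(\tilde u)}_y(1)-\tau^q_{q_i(y)}(\tilde u)\|=0$ when $y=o_{p_i}$.

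Next, for a fixed $q\in\supp\varphi_i$ and a fixed $\tilde u\in\Sph^{n-1}_q$, the map $y\mapsto Y^{(\tilde u)}_y(1)-\tau^q_{q_i(y)}(\tilde u)$ is continuous near $o_{p_i}$: the point $q_i(y)=\exp_{p_i}(\exp_{p_i}^{-1}q-y)$ depends smoothly on $y$ as long as $\exp_{p_i}^{-1}q-y$ stays in $\B_{r_i}(o_{p_i})$, the Jacobi field $Y^{(\tilde u)}_y(1)$ depends smoothly on the initial data of the geodesic and hence on $y$, and the parallel transport $\tau^q_{q_i(y)}$ along the unique minimal segment from $q$ to $q_i(y)$ (unique, since everything stays inside the convex ball $B_{r_i}(p_i)$) depends continuously on $q_i(y)$. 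Hence there is a number $\ve(q,\tilde u,\eta)>0$ with $\|Y^{(\tilde u)}_y(1)-\tau^q_{q_i(y)}(\tilde u)\|<\eta$ for all $y\in\B_{\ve(q,\,\tilde u,\,\eta)}(o_{p_i})$.

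Finally I would upgrade this to a uniform bound using compactness. Since $\supp\varphi_i$ is a compact subset of $B_{r_i}(p_i)$, we have $\max_{q\in\supp\varphi_i}\|\exp_{p_i}^{-1}q\|<r_i$, leaving room to shrink, and the construction above depends jointly continuously on $(q,\tilde u,y)$ with $(q,\tilde u)$ ranging over the compact unit sphere bundle $\{(q,\tilde u)\mid q\in\supp\varphi_i,\ \tilde u\in\Sph^{n-1}_q\}$ and $y$ in a fixed closed ball about $o_{p_i}$. A standard covering argument, or a subsequence/contradiction argument of the type used in the proof of Lemma \ref{lemN3}, then yields a single $\ve_i(\eta)>0$ valid for all such $(q,\tilde u)$, which is the assertion.

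The only genuinely delicate point is the joint continuity in all three variables at once — in particular continuity of $\tilde u\mapsto Y^{(\tilde u)}_y(1)$ together with the variation of the sphere $\Sph^{n-1}_q$ with $q$ — but this reduces to smooth dependence of Jacobi fields and of parallel transport on their initial data inside a convex ball, so no curvature hypothesis enters and the whole argument stays local within $B_{r_i}(p_i)$.
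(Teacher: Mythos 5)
Your proposal is correct and follows essentially the same route as the paper, whose proof of Lemma \ref{lemB5} simply imitates Lemma \ref{lemN8}: note the difference vanishes at $y=o_{p_i}$, use continuity in $y$ for each fixed $(q,\tilde u)$, and then invoke compactness of $\supp\varphi_i$ and the unit spheres to get a uniform $\ve_i(\eta)$. The extra details you supply (smooth dependence of the Jacobi field and parallel transport, joint continuity over the unit sphere bundle) are exactly what the paper leaves implicit.
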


\begin{proof}
In a similar way to the proof of Lemma \ref{lemN8}, we get the inequality.$\qedd$
\end{proof}

\begin{lemma}\label{lemB6}
For any $\eta>0$, there exists a number $\ve(\eta)>0$ such that
\begin{equation}\label{2014_08_18_lemB6_1}
\|dF_\ve(\tilde u)\| \le (1+\eta) \L(F)
\end{equation}
holds for all $\ve\in(0,\ve(\eta))$ and all unit tangent vectors $\tilde u$ on $M$.
\end{lemma}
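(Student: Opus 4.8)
The plan is to estimate $\|dF_\ve(\tilde u)\|$ by differentiating the expression \eqref{eq:B2} for $F_\ve$ and controlling the two resulting groups of terms separately, exactly parallel to the scalar argument in Lemmas \ref{lemN5}--\ref{lemN11}, but now keeping track only of operator norms rather than of membership in a generalized gradient. First I would differentiate $F_\ve = \sum_i \varphi_i F_\ve^{(p_i)}$ along a unit tangent vector $\tilde u$ at a point $q$, obtaining
\[
dF_\ve(\tilde u) = \sum_i \varphi_i(q)\, (dF_\ve^{(p_i)})_q(\tilde u) + \sum_i d\varphi_i(\tilde u)\, F_\ve^{(p_i)}(q).
\]
Since $\sum_i \varphi_i \equiv 1$ we have $\sum_i d\varphi_i(\tilde u) = 0$, so the second sum equals $\sum_i d\varphi_i(\tilde u)\,(F_\ve^{(p_i)}(q) - F(q))$, which by Lemma \ref{lemB2} is bounded in norm by $\ve \cdot \L(F) \sum_i |d\varphi_i(\tilde u)| \cdot \L(\exp_{p_i}|_{\B_{r_i+\ve}(o_{p_i})})$. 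Because there are only finitely many $\varphi_i$ (here $M$ is compact and the cover is finite, $i=1,\dots,\ell$), the quantity $\sum_i |d\varphi_i(\tilde u)| \cdot \L(\exp_{p_i}|_{\B_{r_i+\ve}(o_{p_i})})$ is bounded by a constant $C$ independent of $q$, $\tilde u$, and small $\ve$; hence this correction term is $\le C\,\ve\,\L(F)$, which is $\le \tfrac{\eta}{2}\L(F)$ once $\ve$ is small enough.

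For the first sum, I would apply Lemmas \ref{lemB4} and \ref{lemB5}: given $\eta>0$, choose $\ve_i(\eta/2)$ so that $\sup\{\|Y_y^{(\tilde u)}(1) - \tau_{q_i(y)}^q(\tilde u)\|\}< \eta/2$ over the relevant ranges, and set $\ve_0 := \min_i \ve_i(\eta/2)$. Then for $\ve < \ve_0$, Lemma \ref{lemB4} gives $\|(dF_\ve^{(p_i)})_q(\tilde u)\| \le \L(F)(1 + \eta/2)$ for every $i$ and every $q\in\supp\varphi_i$. Since the $\varphi_i$ are nonnegative with $\sum_i \varphi_i(q)=1$, the triangle inequality yields $\|\sum_i \varphi_i(q)(dF_\ve^{(p_i)})_q(\tilde u)\| \le \L(F)(1+\eta/2)$. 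Combining with the bound on the second sum, and setting $\ve(\eta) := \min\{\ve_0, \tfrac{\eta}{2C}\}$ (or just $\ve_0$ after re-shrinking), we get $\|dF_\ve(\tilde u)\| \le \L(F)(1+\eta/2) + \tfrac{\eta}{2}\L(F) = (1+\eta)\L(F)$ for all $\ve \in (0,\ve(\eta))$ and all unit $\tilde u$, as claimed.

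The only genuine subtlety — which is really the point of Lemmas \ref{lemB4}--\ref{lemB5} — is that $dF_{q_i(y)}$ need not exist for every $y$, since $F$ is merely Lipschitz; but the integral representation \eqref{eq:B5} makes sense because $dF$ exists almost everywhere by Rademacher's theorem, and $\|dF_{q_i(y)}\| \le \L(F)$ wherever it exists, so the Jacobi-field comparison in Lemma \ref{lemB4} absorbs all the error into the term $\sup_y \|Y_y^{(\tilde u)}(1) - \tau_{q_i(y)}^q(\tilde u)\|$, which tends to $0$ with $\ve$ uniformly on the compact data by Lemma \ref{lemB5}. So no serious obstacle remains; the proof is a bookkeeping combination of the four preceding lemmas, and the finiteness of the cover is what makes the $d\varphi_i$ term uniformly small.
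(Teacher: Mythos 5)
Your proposal is correct and follows essentially the same route as the paper: the same decomposition $(dF_\ve)_q(\tilde u)=\sum_i\varphi_i(q)(dF_\ve^{(p_i)})_q(\tilde u)+\sum_i(d\varphi_i)_q(\tilde u)(F_\ve^{(p_i)}(q)-F(q))$ using $\sum_i d\varphi_i(\tilde u)=0$, with Lemma \ref{lemB2} controlling the second sum and Lemmas \ref{lemB4}--\ref{lemB5} controlling the first. You merely spell out the $\eta/2$-splitting and the uniformity from the finite cover, which the paper leaves implicit.
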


\begin{proof}
Take any $\tilde{u} \in \Sph_q^{n-1}$, and fix it. 
Since $\sum_{i=1}^\ell \varphi_i=1$ on $M$, 
we get $\sum_{i=1}^\ell (d\varphi_i)_q(\tilde u)=0$. 
Then, we have 
\begin{equation}\label{2015_04_15_lemB6_1}
(dF_\ve)_q(\tilde u) 
=\sum_{i=1}^\ell \varphi_i(q) (dF_\ve^{(p_i)})_q(\tilde u)
+\sum_{i=1}^\ell (d\varphi_i)_q(\tilde u)(F_\ve^{(p_i)}(q)-F(q)).
\end{equation}
By applying the triangle inequality to the equation above,
we have 
\[
\|(dF_\ve)_q(\tilde u)\| 
\le 
\sum_{i=1}^\ell \varphi_i(q) \|(dF_{\ve}^{(p_i)})_q(\tilde u)\| 
+\sum_{i=1}^\ell |(d\varphi_i)_q(\tilde u)| \cdot \|F_\ve^{(p_i)}(q)-F(q)\|.
\] 
By Lemmas \ref{lemB2}, \ref{lemB4} and \ref{lemB5}, we get 
\eqref{2014_08_18_lemB6_1} for all sufficiently small $\ve>0$.
$\qedd$
\end{proof}

From now on, we assume that $n = \dim M \le \dim N = k$.

\begin{lemma}\label{lemB7}
For each non-singular point $p\in M$ of $F$, 
there exist positive numbers $r(p)$ and $\delta(p)$ 
such that, for any $u\in \Sph^{n-1}_p$, 
there exists a local unit vector field $V$ 
on a neighborhood of $F(p)$ satisfying 
\begin{equation}\label{eq:B8}
\langle dF_q(\tau_q^p(u)),V_{F(q)}\rangle\geq\delta(p)
\end{equation}
for almost all $q\in B_{2r(p)}(p)$. 
\end{lemma}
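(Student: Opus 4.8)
The plan is to carry out, in the manifold setting, the Clarke-type construction of Example \ref{2015_07_25_Exa_3}, but to make the output \emph{uniform in the direction} $u$ by a compactness argument on $\Sph^{n-1}_p$, and to propagate the estimate from $p$ to nearby points using the upper semicontinuity of the generalized differential. Throughout I identify $T_pM$ with $\R^n$ via $\exp_p$ as in Section \ref{2015_08_26_Sect3.2}, so that $\partial F(p)$ is a compact convex set of linear maps $T_pM\to\R^m$. For $q$ close enough to $p$ that $q$ is joined to $p$ by a unique minimal geodesic, the parallel transport $\tau_q^p\colon T_pM\to T_qM$ is defined, and for $q\in M\setminus E_F$ the composite $dF_q\circ\tau_q^p\colon T_pM\to\R^m$ is a well-defined linear map with $\|dF_q\circ\tau_q^p\|\le\L(F)$ in operator norm (since $\tau_q^p$ is an isometry and $\|dF_q(w)\|_{\R^m}\le\L(F)\|w\|$).

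First I would fix the separating data at $p$. Given $u\in\Sph^{n-1}_p$, by Example \ref{2015_07_25_Exa_3} the set $\partial F(p)u:=\{Au\mid A\in\partial F(p)\}$ is convex, compact and does not contain the origin (this is where non-singularity, i.e.\ maximal rank $=n$, is used), and it lies in $T_{F(p)}N$. Let $w(u)$ be the point of $\partial F(p)u$ nearest to the origin and set $v(u):=w(u)/\|w(u)\|\in S^{m-1}(1)$; the variational inequality for the nearest point gives $\langle Au,v(u)\rangle\ge\|w(u)\|=\dist(o,\partial F(p)u)$ for every $A\in\partial F(p)$. The function $u\mapsto\dist(o,\partial F(p)u)$ is continuous on $\Sph^{n-1}_p$ (the set $\partial F(p)u$ varies continuously in Hausdorff distance) and strictly positive, hence attains a positive minimum; define $\delta(p)$ to be one half of this minimum, so that
\[
\langle Au,\,v(u)\rangle\ \ge\ 2\,\delta(p)\qquad\text{for all }u\in\Sph^{n-1}_p,\ A\in\partial F(p).
\]

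Next I would spread this to a neighbourhood of $p$. By upper semicontinuity of the generalized differential (Clarke), using $dF_q\in\partial F(q)$ for $q\notin E_F$ and the bound $\|dF_q\circ\tau_q^p\|\le\L(F)$, there is $\rho>0$ such that for almost every $q\in B_\rho(p)$ one has $\|dF_q\circ\tau_q^p-A_q\|<\delta(p)$ for some $A_q\in\partial F(p)$; equivalently, if this failed there would be $q_i\to p$ with $q_i\notin E_F$ and, after passing to a subsequence, a limit of the maps $dF_{q_i}\circ\tau_{q_i}^p$ which by \eqref{2015_08_02_Clarke_1} (read in the trivialization by parallel transport to $p$) lies in $\partial F(p)$, a contradiction. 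Put $r(p):=\rho/2$, so $B_{2r(p)}(p)=B_\rho(p)$. Then for every $u\in\Sph^{n-1}_p$ and almost every $q\in B_{2r(p)}(p)$,
\[
\langle dF_q(\tau_q^p u),\,v(u)\rangle\ \ge\ \langle A_q u,\,v(u)\rangle-\|dF_q\circ\tau_q^p-A_q\|\ >\ 2\delta(p)-\delta(p)\ =\ \delta(p).
\]
Finally, given $u$, take $V$ to be the constant (parallel) unit vector field on $\R^m$ with value $v(u)$, restricted to a neighbourhood of $F(p)$; then \eqref{eq:B8} holds. If $V$ is required to be tangent to $N$, observe that $\partial F(p)u\subset T_{F(p)}N$ together with $\langle\,\cdot\,,v(u)\rangle\ge2\delta(p)>0$ forces the orthogonal projection $P_{F(p)}v(u)$ onto $T_{F(p)}N$ to be nonzero; normalizing $P_y v(u)$ for $y$ in a small neighbourhood of $F(p)$ in $N$ yields a smooth unit vector field along which the same inequality survives (the normalizing denominator being $\le1$), after shrinking $r(p)$ so that $F(B_{2r(p)}(p))$ lies in that neighbourhood.

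The convexity inputs (nearest-point inequality, continuity of $u\mapsto\partial F(p)u$) and the contradiction argument are routine. The step deserving the most care is the passage from $p$ to nearby $q$: one must check that comparing the $dF_q$ at different points via the transports $\tau_q^p$ is compatible with the chart-free definition \eqref{2015_08_02_Clarke_1} of $\partial F(p)$, and that the null set $E_F$ and the cut locus of $p$ cause no trouble inside the small ball $B_\rho(p)$ — both handled by the uniform bound $\L(F)$ on the differentials and by taking $\rho$ small. A secondary point is the uniformity of $\delta(p)$ (and implicitly of $\rho$, which does not depend on $u$) over the direction sphere, which is exactly what the compactness of $\Sph^{n-1}_p$ delivers.
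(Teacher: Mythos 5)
Your proposal is correct and takes essentially the same route as the paper: the paper simply invokes Clarke's separation estimate (Example \ref{2015_07_25_Exa_3}) at the non-singular point and parallel-transports the resulting unit vector to get $V$, while you re-derive that estimate by hand (nearest-point separation for $\partial F(p)u$, compactness of $\Sph^{n-1}_p$ for uniformity of $\delta(p)$ in $u$, upper semicontinuity of the generalized differential for the passage to nearby $q$) and extend $v(u)$ to a field tangent to $N$ by projection and normalization rather than by parallel transport. The only substantive difference is that you make explicit the uniformity in $u$ of $\delta(p)$ and $r(p)$, which the paper leaves implicit in its citation of Clarke; no gap.
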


\begin{proof}
Choose convex balls $B_{r_1}(p)$ and $B_{r_2}(F(p))$ 
so as to satisfy
$F(B_{r_1}(p))\subset B_{r_2}(F(p))$. 
Since the point $p$ is a non-singular point of $F$, it follows from 
Example \ref{2015_07_25_Exa_3} that 
there exist positive numbers $r(p)\in(0,r_1/2)$ and $\delta(p)$ satisfying
the following property: For any $u \in \Sph^{n-1}_p$ at the point $p$, 
there exists a unit vector  $v$ at $F(p)$ such that 
\[
\langle\tau_{F(p)}^{F(q)}\circ dF_q\circ \tau^p_q(u),v\rangle\geq \delta(p)
\] 
for almost all $q\in B_{2r(p)}(p)$. Hence, we get
$\langle dF_q(\tau_q^p( u)),V_{F(q)}\rangle\geq \delta(p)$
for almost all $q\in B_{2r(p)}(p)$, where $V_{F(q)}:=\tau_{F(q)}^{F(p)}(v)$.
$\qedd$
\end{proof}

Henceforth, we fix any non-singular point $p \in M$ of $F$ and any $\tilde u\in \Sph_q^{n-1}$ at any point $q\in B_{r(p)}(p)$. 
Here, we also fix  an integer $i\in\{1,2, \dots,\ell\}$ satisfying 
$q\in \supp \varphi_i \subset B_{r_i}(p_i)$.  

\begin{lemma}\label{lemB8}
There exists a unit vector field $V$ on a neighborhood of $F(p)$ such that
\begin{align}\label{eq:B9}
&\langle (dF_\ve^{(p_i)})_q(\tilde u),V_{F(q)}\rangle\\[1mm]
&\ge -\L(F)\big(\sup_{y\in \B_\ve( o_{p_i}) } \|Y_y^{(\tilde u)}(1)-U_{q_i(y)}\|
+\sup_{y\in \B_\ve( o_{p_i})}\|V_{F(q)}-V_{F(q_i(y))}\|\big)
+\delta(p)\notag
\end{align}
for all $\ve\in(0,\ve_i(p))$. Here, 
$
\ve_i(p):=\min 
\left\{ 
r_i, r(p) / \L(\exp_{p_i}|_{\B_{2r_i}(o_{p_i})} )
\right\}
$ and 
$U_{q_i(y)} := \tau_{q_i(y)}^p\circ \tau_p^q (\tilde{u})$.
\end{lemma}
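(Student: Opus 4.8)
The plan is to pair the integral formula for $(dF_\ve^{(p_i)})_q(\tilde u)$ from the proof of Lemma \ref{lemB4} with $V_{F(q)}$ and to estimate the integrand pointwise in $y$ by means of Lemma \ref{lemB7}. First I would fix the vector field: put $u:=\tau_p^q(\tilde u)\in\Sph^{n-1}_p$ and let $V$ be the local unit vector field on a neighborhood of $F(p)$ supplied by Lemma \ref{lemB7} for this $u$, so that
\[
\langle dF_{q'}(\tau^p_{q'}(u)),V_{F(q')}\rangle\ge\delta(p)
\]
for almost every $q'\in B_{2r(p)}(p)$. The role of $\ve_i(p)$ is to keep $q_i(y)$ inside $B_{2r(p)}(p)$: for $\|y\|<\ve<\ve_i(p)\le r_i$ one has $\exp_{p_i}^{-1}q-y\in\B_{2r_i}(o_{p_i})$, hence $d(q_i(y),q)\le\L(\exp_{p_i}|_{\B_{2r_i}(o_{p_i})})\|y\|<r(p)$ by the second entry in the minimum defining $\ve_i(p)$, so $d(q_i(y),p)<2r(p)$; moreover $y\mapsto q_i(y)$ is a diffeomorphism onto an open set, so the almost-everywhere clause of Lemma \ref{lemB7} transfers to almost every $y\in\B_\ve(o_{p_i})$. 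Since $U_{q_i(y)}=\tau^p_{q_i(y)}\circ\tau^q_p(\tilde u)=\tau^p_{q_i(y)}(u)$, this yields $\langle dF_{q_i(y)}(U_{q_i(y)}),V_{F(q_i(y))}\rangle\ge\delta(p)$ for almost all $y\in\B_\ve(o_{p_i})$.

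The second step is the pointwise splitting, valid at every differentiability point $q_i(y)$ of $F$:
\[
\langle dF_{q_i(y)}(Y_y^{(\tilde u)}(1)),V_{F(q)}\rangle
=\langle dF_{q_i(y)}(U_{q_i(y)}),V_{F(q_i(y))}\rangle
+\langle dF_{q_i(y)}(U_{q_i(y)}),V_{F(q)}-V_{F(q_i(y))}\rangle
+\langle dF_{q_i(y)}(Y_y^{(\tilde u)}(1)-U_{q_i(y)}),V_{F(q)}\rangle .
\]
The last two terms I would bound by Cauchy--Schwarz, using $\|V_{F(q)}\|=1$, $\|U_{q_i(y)}\|=1$ (parallel transport preserves norms), and the operator-norm bound $\|dF_x\|\le\L(F)$ valid at differentiability points of a Lipschitz map. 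Combined with the anchor inequality of the previous paragraph, this gives, for almost all $y\in\B_\ve(o_{p_i})$,
\[
\langle dF_{q_i(y)}(Y_y^{(\tilde u)}(1)),V_{F(q)}\rangle
\ge\delta(p)-\L(F)\|Y_y^{(\tilde u)}(1)-U_{q_i(y)}\|-\L(F)\|V_{F(q)}-V_{F(q_i(y))}\| .
\]

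Finally I would integrate this inequality against the mollifier, using $\rho_\ve\ge0$, $\int_{\R^n}\rho_\ve=1$, $\supp\rho_\ve\subset\B_\ve(o_{p_i})$, and the formula $(dF_\ve^{(p_i)})_q(\tilde u)=\int_{\R^n}\rho_\ve(y)dF_{q_i(y)}(Y_y^{(\tilde u)}(1))dy$; bounding the integrals of the two error terms by their suprema over $\B_\ve(o_{p_i})$ produces exactly \eqref{eq:B9}. This lemma is really a bookkeeping step fusing Lemma \ref{lemB4} with Lemma \ref{lemB7}, so there is no serious obstacle; the only points that need care are the containment $q_i(y)\in B_{2r(p)}(p)$ for $\|y\|<\ve_i(p)$ — which is precisely what forces the stated definition of $\ve_i(p)$ — and the transfer of the ``almost every $q'$'' clause of Lemma \ref{lemB7} to ``almost every $y$'' under the substitution $q'=q_i(y)$.
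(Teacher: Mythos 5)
Your proposal is correct and follows essentially the same route as the paper: the same choice of $u=\tau_p^q(\tilde u)$ and $V$ from Lemma \ref{lemB7}, the same use of $\ve_i(p)$ to force $q_i(y)\in B_{2r(p)}(p)$, the same decomposition into the terms $dF_{q_i(y)}(Y_y^{(\tilde u)}(1)-U_{q_i(y)})$ and $V_{F(q)}-V_{F(q_i(y))}$ bounded via $\L(F)$, and the same final integration against $\rho_\ve$ through \eqref{eq:B5}. Your remark on transferring the almost-everywhere clause under $y\mapsto q_i(y)$ is a small point the paper leaves implicit, but it does not change the argument.
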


\begin{proof}
It follows from Lemma \ref{lemB7} that for the unit tangent vector $u:=\tau_p^q(\tilde u)$, 
there exists a unit vector field $V$ on a neighborhood of $F(p)$ satisfying \eqref{eq:B8}. 
By the triangle inequality, 
$\|\exp_{p_i}^{-1}q-y\|<r_i+\|y\|<2r_i$ for all $y\in \B_{r_i}(o_{p_i})$. 
Thus, we get
\[
d_M(q,q_i(y))\le \|y\| \cdot \L(\exp_{p_i}|_{\B_{2r_i}(o_{p_i})}  ) 
\]
for all $y \in \B_{r_i}(o_{p_i})$, 
where $d_M$ denotes the distance function of $M$.  
Then, from the triangle inequality, we obtain
\[
d_M(p,q_i(y))\leq d_M(p,q)+d_M(q,q_i(y))<2r(p)
\]
for all $y \in \B_{\ve_i (p)}(o_{p_i})$. 
Thus,  by Lemma \ref{lemB7},
\begin{equation}\label{eq:B10}
\langle dF_{q_i(y)}( U_{q_i(y)}  ),V_{F(q_i(y))}\rangle\geq\delta(p)
\end{equation}
for almost all $y \in \B_{\ve_i (p)}(o_{p_i})$, 
where $U_{q_i(y)}:=\tau^{p}_{q_i(y)}(u)$. 
It is clear to see that for almost all $y\in \B_{\ve}(o_{p_i})$,  
\begin{align}\label{eq:B11}
&\langle dF_{q_i(y)}(U_{q_i(y)}),V_{F(q)}\rangle\\[1mm] 
&=
\langle dF_{q_i(y)}( U_{q_i(y)}  ),V_{F(q)}-V_{F(q_i(y))}\rangle 
+ \langle dF_{q_i(y)}(U_{q_i(y)}),V_{F(q_i(y))}\rangle\notag\\[1mm]
&\ge-\L(F)\sup_{ y\in \B_{\ve}(o_{p_i})}\| V_{F(q)}-V_{F(q_i(y))} \| 
+\langle dF_{q_i(y)}( U_{q_i(y)}  ),V_{F(q_i(y))}\rangle.\notag
\end{align}
Since
$dF_{q_i(y)}(Y_y^{(\tilde u)}(1))=
dF_{q_i(y)}(Y_y^{(\tilde u)}(1)-U_{q_i(y)})+dF_{q_i(y)}(U_{q_i(y)})$, 
we therefore get \eqref{eq:B9} from \eqref{eq:B5}, \eqref{eq:B10} and \eqref{eq:B11}.$\qedd$
\end{proof}

\begin{lemma}\label{lemB9}
For any $\eta>0$, there exists a number $\ve_i(p,\eta) >0$
such that 
\[
\sup\{\|Y_y^{(\tilde u)}(1)-U_{q_i(y)}\|\,|\,  
q \in \supp\varphi_i \cap \ol{B_{r(p)}(p)},\, \tilde u\in \Sph_q^{n-1},\,y\in \B_{\ve_i(p,\,\eta)}(o_{p_i} )\}<\eta
\]
and
\[
\sup\{\|\tau_{F(q)}^{F(p)}(v) - \tau_{F(q_i(y))}^{F(p)}(v) \|\,|\, q\in\supp\varphi_i\cap\ol{B_{r(p)}(p)},\, 
v\in \Sph_{F(p)}^{k-1},\, y\in \B_{\ve_i(p,\,\eta)}(o_{p_i} )\}<\eta,
\]
where $\ol{B_{r(p)}(p)}:= \{q \in M\,|\, d(p, q) \le r(p)\}$. 
\end{lemma}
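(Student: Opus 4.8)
The plan is to establish both estimates by the compactness scheme already used for Lemma~\ref{lemN8} and Lemma~\ref{lemB5}, the only new feature being that one now carries the unit vector $v\in\Sph^{k-1}_{F(p)}$ along as an extra parameter and measures the second quantity in the Euclidean norm of $\R^m$ via the isometric embedding $N\hookrightarrow\R^m$.

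First I would record the behaviour at $y=o_{p_i}$. Since $q_i(o_{p_i})=q$, the Jacobi field satisfies $Y^{(\tilde u)}_{o_{p_i}}(1)=\tilde u$ and $U_{q_i(o_{p_i})}=\tau^p_q\circ\tau^q_p(\tilde u)=\tilde u$, so the first quantity vanishes there; likewise $V_{F(q)}-V_{F(q_i(o_{p_i}))}=0$. Next I would check that the two maps
\[
(y,q,\tilde u)\longmapsto \|Y^{(\tilde u)}_y(1)-U_{q_i(y)}\|,\qquad
(y,q,v)\longmapsto \|V_{F(q)}-V_{F(q_i(y))}\|
\]
are continuous on their natural domains: the first because Jacobi fields, the parallel transports $\tau^q_p$ and $\tau^p_{q_i(y)}$, and $\exp_{p_i}$ all depend continuously on their data, using $d_M(q,q_i(y))\le\|y\|\,\L(\exp_{p_i}|_{\B_{2r_i}(o_{p_i})})$ (as in Lemma~\ref{lemB8}) to keep $q_i(y)$ inside a convex ball about $p$ once $\|y\|$ is small; the second because $F$ is continuous, $\tau^{F(p)}_{F(q)}$ depends continuously on $F(q)$ on the convex ball $B_{r_2}(F(p))$ on which $V$ was defined in Lemma~\ref{lemB7}, and the inclusion $TN\hookrightarrow N\times\R^m$ is continuous. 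Here one must ensure that $F(q_i(y))$ lies in $B_{r_2}(F(p))$: for $\|y\|<\ve_i(p)$ one has $q_i(y)\in B_{2r(p)}(p)\subset B_{r_1}(p)$, and $F(B_{r_1}(p))\subset B_{r_2}(F(p))$ by the choice of balls in the proof of Lemma~\ref{lemB7}.

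Then I would invoke compactness. The set $\supp\varphi_i\cap\ol{B_{r(p)}(p)}$ is a closed subset of the compact manifold $M$, hence compact, so the unit sphere bundle $\{(q,\tilde u)\,|\,q\in\supp\varphi_i\cap\ol{B_{r(p)}(p)},\ \tilde u\in\Sph^{n-1}_q\}$ and the product $(\supp\varphi_i\cap\ol{B_{r(p)}(p)})\times\Sph^{k-1}_{F(p)}$ are both compact. For a continuous function defined on $\B_{\ve_i(p)}(o_{p_i})$ times one of these compact sets and vanishing identically on $\{o_{p_i}\}\times(\text{the compact set})$, a routine covering argument — choose for each point of the compact set a product neighbourhood on which the function is $<\eta$, extract a finite subcover, and take the minimum of the corresponding radii, capped by $\ve_i(p)$ — yields one radius below which the function is $<\eta$. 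Applying this to each of the two functions and letting $\ve_i(p,\eta)$ be the smaller of the two resulting radii gives the lemma.

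I expect no serious obstacle here: the only delicate points are bookkeeping, namely confirming that $q_i(y)$ and $F(q_i(y))$ stay inside the convex balls where $\tau^p_{q_i(y)}$ and $V$ are defined (handled by shrinking the admissible range of $\|y\|$ below $\ve_i(p)$), and noting that the auxiliary parameter $v$ ranges over the fixed compact sphere $\Sph^{k-1}_{F(p)}$ independently of $q$, so that the uniform bound genuinely holds simultaneously over all $q$, all $\tilde u$, and all $v$.
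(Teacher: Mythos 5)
Your argument is correct and is essentially the paper's: the paper proves Lemma \ref{lemB9} by imitating Lemmas \ref{lemB5} and \ref{lemN8}, i.e.\ exactly the observation that both quantities vanish at $y=o_{p_i}$, continuity in $(y,q,\tilde u)$ resp.\ $(y,q,v)$, and compactness of $\supp\varphi_i\cap\ol{B_{r(p)}(p)}$, the unit sphere bundle over it, and $\Sph^{k-1}_{F(p)}$ to extract a uniform radius. Your extra bookkeeping (keeping $q_i(y)$ and $F(q_i(y))$ inside the convex balls where $\tau^p_{q_i(y)}$ and $V$ are defined) is exactly the implicit content of the paper's abbreviated proof, so there is nothing to add.
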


\begin{proof}
Imitate the proof of Lemma \ref{lemB5}.
$\qedd$
\end{proof}

\begin{lemma}\label{lemB10}
There exists $V_{F(q)} \in \Sph_{F(q)}^{k-1}$ such that
\[
\langle (dF_\ve^{(p_i)})_q(\tilde u),V_{F(q)}\rangle\ge \frac{2\delta(p)}{3}
\]
for all $\ve\in(0,\ve(p))$ and all $p_i$ with $q\in\supp\varphi_i$. 
Here we set 
\[
\ve(p)
:=
\min\{\ve_i(p),\ve_i(p,\eta_0)\,|\, \supp\varphi_i\cap B_{r(p)}(p)
\ne\emptyset\},
\]
where $\eta_0:=\delta(p) / 6\L(F)$.
\end{lemma}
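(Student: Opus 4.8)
The plan is to combine Lemma~\ref{lemB8} with the uniform estimates of Lemma~\ref{lemB9}, choosing all error terms small enough that the ``$+\,\delta(p)$'' in \eqref{eq:B9} dominates. First I would fix the non-singular point $p\in M$ together with the radius $r(p)$ and the constant $\delta(p)$ supplied by Lemma~\ref{lemB7}. Then, for the specific threshold $\eta_0:=\delta(p)/6\L(F)$, Lemma~\ref{lemB9} furnishes, for each index $i$ with $\supp\varphi_i\cap B_{r(p)}(p)\ne\emptyset$, a number $\ve_i(p,\eta_0)>0$ for which
\[
\sup_{y\in\B_{\ve_i(p,\eta_0)}(o_{p_i})}\|Y_y^{(\tilde u)}(1)-U_{q_i(y)}\|<\eta_0,
\qquad
\sup_{y\in\B_{\ve_i(p,\eta_0)}(o_{p_i})}\|V_{F(q)}-V_{F(q_i(y))}\|<\eta_0,
\]
uniformly over $q\in\supp\varphi_i\cap\ol{B_{r(p)}(p)}$ and over the relevant unit vectors $\tilde u$ and $v$. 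Since only finitely many indices $i$ meet $B_{r(p)}(p)$ (because $M$ is compact and the cover $\{B_{r_i}(p_i)\}$ is finite), the minimum defining $\ve(p)$ is a genuine positive number.

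Next I would simply insert these two bounds into \eqref{eq:B9}. For $\ve\in(0,\ve(p))$ and $q\in\supp\varphi_i$ with $q\in B_{r(p)}(p)$, Lemma~\ref{lemB8} (applicable since $\ve<\ve_i(p)$) gives
\[
\langle (dF_\ve^{(p_i)})_q(\tilde u),V_{F(q)}\rangle
\ \ge\ -\L(F)\bigl(\eta_0+\eta_0\bigr)+\delta(p)
\ =\ -\frac{\delta(p)}{3}+\delta(p)\ =\ \frac{2\delta(p)}{3},
\]
where $V$ is the unit vector field on a neighbourhood of $F(p)$ produced in Lemma~\ref{lemB8} for the chosen $u=\tau_p^q(\tilde u)$, and $V_{F(q)}:=\tau_{F(q)}^{F(p)}(v)\in\Sph_{F(q)}^{k-1}$ is a unit vector as required. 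Taking the value of this vector field at $F(q)$ is the $V_{F(q)}$ asserted in the statement; since $\ve_i(p)$ and $\ve_i(p,\eta_0)$ were both included in the minimum defining $\ve(p)$, the inequality holds for every $p_i$ with $q\in\supp\varphi_i$, which is the claim.

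I do not expect a serious obstacle here: the lemma is essentially a bookkeeping step that packages Lemmas~\ref{lemB7}--\ref{lemB9} into a clean lower bound on the directional derivative of the local smoothings $F_\ve^{(p_i)}$. The one point requiring a little care is making sure the \emph{same} unit vector field $V$ (equivalently, the same $v$ at $F(p)$) works simultaneously for all indices $i$ with $q\in\supp\varphi_i$; this is legitimate because Lemma~\ref{lemB7} chooses $v$ depending only on the unit vector $u=\tau_p^q(\tilde u)$ at $p$ and not on $i$, and Lemma~\ref{lemB9} gives the transport estimate $\|V_{F(q)}-V_{F(q_i(y))}\|<\eta_0$ uniformly in $i$ over the finitely many relevant charts. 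Once that is observed, the estimate above is immediate, and this lemma then feeds directly into the next step, where the global derivative $(dF_\ve)_q(\tilde u)=\sum_i\varphi_i(q)(dF_\ve^{(p_i)})_q(\tilde u)+\sum_i(d\varphi_i)_q(\tilde u)(F_\ve^{(p_i)}(q)-F(q))$ is shown to have positive pairing with $V_{F(q)}$, hence $F_\ve$ is an immersion near $p$.
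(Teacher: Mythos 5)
Your proposal is correct and is essentially the paper's own argument: the paper proves Lemma \ref{lemB10} by noting it is immediate from Lemmas \ref{lemB8} and \ref{lemB9}, which is exactly the substitution you carry out, with the choice $\eta_0=\delta(p)/6\L(F)$ giving $-\L(F)(\eta_0+\eta_0)+\delta(p)=2\delta(p)/3$. Your added remarks (finiteness of the cover so that $\ve(p)>0$, and that the vector $v$ from Lemma \ref{lemB7} depends only on $u=\tau_p^q(\tilde u)$ and not on $i$) are accurate and consistent with the paper's setup.
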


\begin{proof}
The inequality is immediate from Lemmas \ref{lemB8} and \ref{lemB9}.
$\qedd$
\end{proof}

\begin{lemma}\label{lemB11}
There exist $V_{F(q)} \in \Sph_{F(q)}^{k-1}$ and a number $\ve_0(p)>0$ such that 
\begin{equation}\label{2014_08_24_lemB11_1}
\langle (dF_\ve)_q(\tilde u),V_{F(q)}\rangle \ge \frac{1}{3}\delta(p)
\end{equation}
for all $\ve\in(0,\ve_0(p))$.
\end{lemma}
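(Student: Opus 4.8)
The plan is to derive \eqref{2014_08_24_lemB11_1} from Lemma \ref{lemB10} by the same device used in the proof of Lemma \ref{lemB6}, namely the decomposition \eqref{2015_04_15_lemB6_1} of $(dF_\ve)_q(\tilde u)$ into a convex combination of the local pieces $(dF_\ve^{(p_i)})_q(\tilde u)$ plus an error term coming from the derivatives of the partition of unity. First I would fix the non-singular point $p\in M$ of $F$ and the associated radius $r(p)$ and constant $\delta(p)$ provided by Lemma \ref{lemB7}, and restrict attention to $q\in B_{r(p)}(p)$ and $\tilde u\in\Sph_q^{n-1}$. For each index $i$ with $q\in\supp\varphi_i$, Lemma \ref{lemB10} furnishes a unit vector $V_{F(q)}\in\Sph_{F(q)}^{k-1}$; the key point to check is that this vector can be chosen \emph{independently of $i$}, which is exactly what Lemma \ref{lemB7} guarantees, since the vector $v$ at $F(p)$ (and hence $V_{F(q)}=\tau_{F(q)}^{F(p)}(v)$) depends only on $u=\tau_p^q(\tilde u)$, not on the chart $B_{r_i}(p_i)$. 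So one genuinely gets a single $V_{F(q)}$ working for all relevant $i$ simultaneously.

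Next I would pair both sides of the identity
\[
(dF_\ve)_q(\tilde u)=\sum_{i=1}^\ell \varphi_i(q)(dF_\ve^{(p_i)})_q(\tilde u)+\sum_{i=1}^\ell (d\varphi_i)_q(\tilde u)\,(F_\ve^{(p_i)}(q)-F(q))
\]
with this common $V_{F(q)}$. The first sum gives, using Lemma \ref{lemB10} and $\sum_i\varphi_i(q)=1$, a contribution of at least $\tfrac{2}{3}\delta(p)$. For the second sum I would use Lemma \ref{lemB2} to bound $\|F_\ve^{(p_i)}(q)-F(q)\|\le\ve\cdot\L(F)\cdot\L(\exp_{p_i}|_{\B_{r_i+\ve}(o_{p_i})})$, apply Cauchy--Schwarz term by term (using $\|V_{F(q)}\|=1$), and note that $\sum_i|(d\varphi_i)_q(\tilde u)|$ is bounded on the compact set $\ol{B_{r(p)}(p)}$. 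Hence there is a number $\ve_0(p)\in(0,\ve(p)]$ such that the error term is at most $\tfrac{1}{3}\delta(p)$ in absolute value for all $\ve\in(0,\ve_0(p))$. Combining the two estimates via the triangle inequality yields $\langle(dF_\ve)_q(\tilde u),V_{F(q)}\rangle\ge\tfrac{2}{3}\delta(p)-\tfrac{1}{3}\delta(p)=\tfrac{1}{3}\delta(p)$, which is \eqref{2014_08_24_lemB11_1}.

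The only delicate point — and the one I would be most careful about — is the uniformity of $V_{F(q)}$ across the finitely many indices $i$ with $q\in\supp\varphi_i$, and the uniformity of $\ve_0(p)$ over $q\in B_{r(p)}(p)$ and $\tilde u\in\Sph_q^{n-1}$. The first is handled as above by appealing to the construction in Lemma \ref{lemB7} (the witnessing direction is attached to $p$, not to the auxiliary chart). The second follows because $\ve(p)$ in Lemma \ref{lemB10} is already a finite minimum over the indices $i$ with $\supp\varphi_i\cap B_{r(p)}(p)\ne\emptyset$, and the bound on $\sum_i|(d\varphi_i)_q(\tilde u)|$ is uniform by compactness of $\ol{B_{r(p)}(p)}$ and of the unit sphere bundle over it; so taking $\ve_0(p)$ to be a single positive number depending only on $p$ is legitimate. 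Everything else is a routine repetition of the estimates already carried out for Lemma \ref{lemB6}, now with lower bounds on an inner product in place of upper bounds on a norm.
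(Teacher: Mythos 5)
Your proposal is correct and follows essentially the same route as the paper: pair the decomposition \eqref{2015_04_15_lemB6_1} with the vector $V_{F(q)}$ from Lemma \ref{lemB10} (which is already stated there to work uniformly for all $i$ with $q\in\supp\varphi_i$), bound the first sum below by $\tfrac{2}{3}\delta(p)$, and use Lemma \ref{lemB2} to make the partition-of-unity error term smaller than $\tfrac{1}{3}\delta(p)$ for $\ve\in(0,\ve_0(p))$. The extra care you take about the $i$-independence of $V_{F(q)}$ and the uniformity of $\ve_0(p)$ is sound and consistent with the paper's construction.
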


\begin{proof}By Lemma \ref{lemB10}, 
for any $\ve\in(0,\ve(p))$,
\begin{equation}\label{eq:B13}
\sum_{i=1}^\ell \varphi_i(q)\langle dF_\ve^{(p_i)}(\tilde u),V_{F(q)}\rangle 
\ge \frac{2}{3}\delta(p).
\end{equation}
From Lemma \ref{lemB2}, we may choose a number 
$\ve_0(p)\in(0,\ve(p))$ satisfying
\begin{equation}\label{eq:B14}
\left|\sum_{i=1}^\ell d\varphi_i(\tilde u)\langle F_\ve^{(p_i)}(q)-F(q),V_{F(q)}\rangle \right| 
< \frac {1}{3}\delta(p)
\end{equation}
for all $\ve\in(0,\ve_0(p))$. 
Combining \eqref{2015_04_15_lemB6_1}, 
\eqref{eq:B13} and \eqref{eq:B14}, 
we get \eqref{2014_08_24_lemB11_1}.$\qedd$
\end{proof}

\bigskip\noindent
{\it (Proof of Theorem \ref{2015_07_30_theorem2})} 
Let $F: M \lra N$ be a Lipschitz map from a compact Riemannian manifold 
$M$ into a Riemannian manifold $N$, where $\dim M \le \dim N$. 
Assume that $F$ has no singular points on $M$. 
We embed $N$ into $\R^m$, where $m\ge \dim N +1$, 
and introduce the induced metric from the outer space to $N$. Note that $F:M\lra \R^m$. 
Since $N$ is a smooth submanifold in $\R^m$, 
for any $x\in N$ there exists an open neighborhood $U_x$ of $x$ in $\R^m$ such that each $y \in U_x$ admits 
a unique $z \in N$ with $\|y-z\|=\inf_{w\in N}\|y-w\|$. 
Considering the open set $\cU_N:=\cup_{x\in N}U_x$ in $\R^m$, we have the smooth locally distance projection 
$\pi_N:\cU_N \lra N$. Note here that $N \subset \cU_N$. 
Since $F(M)$ is compact, it follows from Lemma \ref{lemB3} that 
for all sufficiently small $\ve>0$, the image of the map $F_\ve$ 
defined by \eqref{eq:B2} is a subset of $\cU_N$. 
Then, for any sufficiently small $\ve>0$, 
we can define the smooth map $\pi_N \circ F_\ve$ from $M$ into $N$. By its definition, 
$(d\pi_N)_x$ is an orthogonal projection to $T_xN$ 
for each $x\in N$. Therefore, for any sufficiently small $\ve>0$, 
the map $\pi_N\circ F_\ve: M\lra N$ is an immersion from Lemma \ref{lemB11}. 
Combining Lemmas \ref{lemB3}, \ref{lemB6} and the argument above, 
we get the assertion.
$\qedd$

\subsection{Proof of Corollary \ref{2015_07_30_theorem1}}\label{sect3}%%%%%%%%%%%%%%%%%%%%%%%%%%%%%%%%%%%%%%%%%%%%%%%%%%%%%%%%%%%%%%%%%%%%%%%%%%%%%%%%%%%%%%%%%%%%%%%%%%%%%%%%%%%%%%%%%%%%%%%%%%%%%%%%%%%%%%%%%%%%%%%%%%%%%%%%%%%%%%%%%%%%%%%%%%%%%%%%

We need one more lemma in order to prove 
Corollary \ref{2015_07_30_theorem1}. 
In what follows, $d_M$ denotes the distance function of a given Riemannian manifold $M$.

\begin{lemma}\label{2014_02_05_lem1}
Let $M$ be a compact (connected) Riemannian manifold, and let 
$F:M\lra M$ be a local diffeomorphism. 
If $\max_{x \in M}d_M(F(x), x)$ 
is sufficiently small, then $F$ is injective. 
In particular, $F$ is a diffeomorphism.
\end{lemma}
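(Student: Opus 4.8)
The plan is to show that a $C^\infty$ local diffeomorphism $F\colon M\lra M$ which $C^0$-displaces points by only a tiny amount must be a bijection, and then invoke the inverse function theorem to upgrade this to ``diffeomorphism.'' Since $M$ is compact, $F$ is automatically proper and surjective (a local diffeomorphism from a compact connected manifold onto a connected manifold is a covering map, hence surjective, and in fact of some finite degree $d\ge 1$); so the only thing that needs proving is \emph{injectivity}, equivalently $d=1$.

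First I would fix a scale intrinsic to $M$: let $2\rho>0$ be smaller than the injectivity radius of $M$ and small enough that any two points at distance $<2\rho$ are joined by a unique minimal geodesic lying in a totally normal (strongly convex) ball. The key local statement is that near any point $p$, the map $F$ is a diffeomorphism onto its image on a ball of a definite radius: since $dF_p$ is an isomorphism for every $p$ and $M$ is compact, by a standard compactness argument there is a uniform $r_0>0$ such that $F|_{B_{r_0}(p)}$ is a diffeomorphism onto an open set containing $B_{c r_0}(F(p))$ for some uniform constant $c>0$ (one can extract this from continuity of $dF$ and the quantitative inverse function theorem, exactly as in the uniform-size arguments already used in this paper, e.g.\ Lemma~\ref{lemB7}). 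In particular $F$ is injective on every ball of radius $r_0$.

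Now assume $\max_{x\in M} d_M(F(x),x)<\eta$ with $\eta$ to be chosen, say $\eta<\min\{\rho, r_0/4, c r_0/4\}$, and suppose for contradiction that $F(x)=F(y)$ with $x\ne y$. Then $d_M(x,y)\ge r_0$, since $F$ is injective on balls of radius $r_0$ and $d_M(x,y)<r_0$ would force $x=y$. On the other hand, consider the point $z:=F(x)=F(y)$; both $x$ and $y$ lie in $\overline{B_\eta(z)}$ because $d_M(x,F(x))<\eta$ and $d_M(y,F(y))<\eta$, so $d_M(x,y)\le 2\eta<r_0$, a contradiction. Hence $F$ is injective. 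Being an injective local diffeomorphism of the compact connected manifold $M$ onto itself, $F$ is a bijective immersion with open image; since $M$ is compact the image is also closed, hence all of $M$, and $F^{-1}$ is smooth by the inverse function theorem. Therefore $F$ is a diffeomorphism. $\qedd$

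I expect the only genuinely delicate point to be the extraction of the \emph{uniform} local-injectivity radius $r_0$ and constant $c$ — i.e.\ making ``$F$ is injective on every ball of radius $r_0$'' quantitative and uniform over $M$. Everything else (surjectivity from compactness/covering-space theory, the final contradiction via the triangle inequality, and the inverse function theorem) is routine; the compactness of $M$ together with the continuity of $dF$ and the already-available quantitative form of the inverse function theorem used elsewhere in the paper should supply $r_0$ without difficulty.
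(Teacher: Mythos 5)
Your argument is correct as far as it goes, and it is genuinely different from the paper's: you prove injectivity by a purely metric argument (a uniform local-injectivity radius $r_0>0$ for $F$, obtained by compactness, plus the triangle inequality applied to a pair $x\ne y$ with $F(x)=F(y)$), whereas the paper's proof is topological: once the displacement is smaller than the injectivity radius $i(M)$, the unique minimal geodesics from $x$ to $F(x)$ give a homotopy from $F$ to $\id_M$, so the covering map $F$ induces an isomorphism on $\pi_1$, and the standard covering-space lemma (number of sheets $=$ index of $F_\sharp\pi_1$) forces each fibre to be a single point. Your surjectivity and final ``diffeomorphism'' steps coincide with the paper's. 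The contradiction itself ($d_M(x,y)\ge r_0$ versus $d_M(x,y)\le 2\eta<r_0$) is sound, and the existence of $r_0$ for a fixed local diffeomorphism on a compact manifold is indeed a routine compactness argument.

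The one genuine issue is the dependence of your smallness threshold on $F$. Your $\eta$ must be smaller than quantities ($r_0$, $c$) extracted from $F$ itself, via the behaviour of $dF$; so what you prove is: \emph{for each} local diffeomorphism $F$ there is $\varepsilon(F)>0$ such that displacement $<\varepsilon(F)$ implies injectivity. The paper's proof yields a threshold depending only on $M$ (namely $i(M)$), and this uniformity is exactly what is used when Lemma \ref{2014_02_05_lem1} is applied in the proof of Corollary \ref{2015_07_30_theorem1}: there the lemma is invoked for the whole family $g_\eta\circ f_\eta$, whose displacement tends to $0$ but over which you have no control of a lower bound for the local-injectivity radius (the available bounds are only Lipschitz, not $C^1$, so $r_0(g_\eta\circ f_\eta)$ could a priori shrink as fast as $\eta$). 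To close this gap along your lines you would have to show that your $r_0$ can be chosen depending only on $M$, which is essentially the content of the lemma itself; the homotopy/$\pi_1$ (or degree) argument of the paper is what supplies that uniformity, so either adopt it or restate your claim with the quantifiers making the $F$-dependence explicit and then justify separately why it suffices for the corollary.
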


\begin{proof}Since $F(M)$ is open and closed in $M$, 
$F$ is surjective. Then, we can consider $F$ to be a covering map. 
Since $M$ is compact, there exists a number $a$ such that 
$0 < a < i(M)$, where $i(M)$ denotes the injectivity radius of $M$. 
Since $\max_{x \in M}d_M(F(x), x)\ll 1$, we may assume that 
$d_M(F(x), x) < i(M)$ for all $x \in M$. Thus, for each $x \in M$, there exists a unique minimal geodesic segment $\gamma :[0,1]\lra M$ joining $x=\gamma(0)$ 
to $F(x)= \gamma(1)$. 
Define the map $H: M\times [0,1]\lra M$ by $H(x, t):= \sigma_x (t)$, 
where $\sigma_x :[0,1]\lra M$ denotes 
a unique minimal geodesic 
segment emanating from $x=\sigma_x (0)$ 
to $F(x)= \sigma_x (1)$. $H$ is continuous, for $F$ is continuous and $\sigma_x$ is unique. 
By the definition of $H$, $H(x, 0)=x= \id_M (x)$ and $H(x, 1)=F(x)$ for all $x \in M$, i.e., $F$ is homotopic to $\id_M$. 
Since $\pi_1 (M, F(x))/ F_\sharp(\pi_1 (M, x))$ 
is trivial, where $F_\sharp$ denotes 
the homomorphism from $\pi_1 (M, x)$ to $\pi_1 (M, F(x))$ 
induced by $F$, it follows from a well known lemma 
(cf.\,Corollary 3 in \cite[Chapter 3]{ST}) 
that $F^{-1} (x)$ is one point, i.e., $F$ is injective.
$\qedd$
\end{proof}

\bigskip\noindent
{\it (Proof of Corollary \ref{2015_07_30_theorem1})} 
Let $F$ be a bi-Lipschitz homeomorphism 
from a compact Riemannian manifold $M$ 
onto a Riemannian manifold $N$. 
Assume that $F$ and $F^{-1}$ have no singular points 
on $M$ and $N$, respectively. By Theorem \ref{2015_07_30_theorem2}, for any $\eta >0$, 
there exist two smooth immersions $f_\eta$ from $M$ 
into $N$ and $g_\eta$ from $N$ into $M$ such that 
\[
\max_{p\in M}d_N(f_\eta(p),F(p))<\eta, \quad \L(f_\eta)\le \L(F)(1+\eta),
\] 
and that 
\[
\max_{q\in N}d_M(g_\eta(q),F^{-1}(q))<\eta, \quad \L(g_\eta) \le \L(F^{-1})(1+\eta),
\]
respectively. Hence, $\L(g_\eta\circ f_\eta)\leq\L(g_\eta)\L(f_\eta)\leq\L(F)\L(F^{-1})(1+\eta)^2$.
Moreover, by the triangle inequality, we get
\[
d_M(g_\eta\circ f_\eta(p),p)
\le d_M(g_\eta(f_\eta(p)),F^{-1}(f_\eta(p))  )+d_M(F^{-1}(f_\eta(p) ),F^{-1}(F(p))).
\]
Since $d_M(g_\eta(f_\eta(p)),F^{-1}(f_\eta(p))   )<\eta$ 
and $d_M(F^{-1}(f_\eta(p)),F^{-1}(F(p)))<\eta\L(F^{-1})$ 
for all $p\in M$, we obtain 
$\max_{p\in M}d_M(g_\eta\circ f_\eta(p),p)<\eta (1+\L(F^{-1}))$.
Therefore, by Lemma \ref{2014_02_05_lem1}, 
for all sufficiently small $\eta>0$, $g_{\eta}\circ f_{\eta}$ 
is a diffeomorphism on $M$. 
This implies that $f_\eta$ and $g_\eta$ are injective 
for all sufficiently small $\eta$, and hence $M$ and $N$ are diffeomorphic. 
$\qedd$

\section{Proofs of the differentiable exotic sphere theorems}\label{sect2_new}%%%%%%%%%%%%%%%%%%%%%%%%%%%%%%%%%%%%%%%%%%%%%%%%%%%%%%%%%%%%%%%%%%%%%%%%%%%%%%%%%%%%%%%%%%%%%%%%%%%%%%%%%%%%%%%%%%%%%%%%%%%%%%%%%%%%%%%%%%%%%%%%%%%%%%%%%%%%%%%%%%%%%%%%%%%%%%%%

We first give two preliminaries to 
three cases \eqref{2015_07_25_MA_1}, 
\eqref{2015_08_12_MA_2}, and \eqref{2016_03_23_TAII_as}.  

\subsection{Preliminaries to the proofs in assuming \eqref{2015_07_25_MA_1} and \eqref{2016_03_23_TAII_as}}\label{sect2_new1}

Throughout this subsection, 
let $\sigma : S^{n-1}(1):=\{v \in \R^n\,|\, \|v\|=1\}\lra S^{n-1}(1)$ 
be a diffeomorphism satisfying 
\begin{equation}\label{2015_07_26_exotic}
\int^{\pi}_0 e^{-t} \| \ddot{c}(t) + c(t) \|dt \le e^{-\pi}\alpha 
\end{equation}
for some $\alpha >0$ and any unit speed geodesic $\gamma:[0,\pi]\lra S^{n-1}(1)$. 
Here, 
\[
c:= \sigma\circ \gamma, \quad \dot{c}:=\frac{dc}{dt}, 
\quad {\rm and} \quad \ddot{c}:=\frac{d^2c}{dt^2}.
\]
Our aim in this subsection is to prove the following theorem: 

\begin{theorem}\label{2015_07_25_theorem}
If $\alpha >0$ is sufficiently small, for example $\alpha = 1-1/\sqrt{2}$, then the origin 
$o \in \R^n$ is non-singular for the map 
$\wt{F}:\R^n\lra\R^n$ defined by 
\[
\wt{F}(v) := 
\begin{cases}
\ \|v\| \displaystyle{\sigma \left( \frac{v}{\|v\|} \right)}  \ &\text{on} \ \R^n \setminus\{o\},\\[2mm]
\ o  \ &\text{when} \ v=o.
\end{cases}
\]
\end{theorem}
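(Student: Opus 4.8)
The goal is to show that $o\not\in\partial\wt F(o)$ in the sense of being ``of maximal rank'' — i.e., every matrix in $\partial\wt F(o)$ is invertible. First I would compute $\partial\wt F(o)$. Since $\wt F$ is smooth away from $o$ and is positively $1$-homogeneous, any limit $\lim_i d\wt F_{v_i}$ with $v_i\to o$ (and $v_i\ne o$) depends only on the directions $u_i:=v_i/\|v_i\|$, so $\partial\wt F(o)=\Conv\big(\{\lim_i d\wt F_{u_i}\,|\,u_i\to u\in S^{n-1}(1)\}\big)$. At a point $u\in S^{n-1}(1)$ the differential $d\wt F_u$ decomposes along the splitting $\R^n=\R u\oplus T_uS^{n-1}(1)$: in the radial direction $\wt F$ maps the ray $\R_{>0}u$ to the ray $\R_{>0}\sigma(u)$ isometrically, so $d\wt F_u(u)=\sigma(u)$; in a tangential direction $w\in T_uS^{n-1}(1)$ one gets $d\wt F_u(w)=d\sigma_u(w)\in T_{\sigma(u)}S^{n-1}(1)$. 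Thus each extreme matrix $A_u:=d\wt F_u$ sends $u\mapsto\sigma(u)$ and $T_uS^{n-1}(1)\to T_{\sigma(u)}S^{n-1}(1)$ via $d\sigma_u$, which is a diffeomorphism of spheres; hence each $A_u$ is itself invertible. The real content is that an arbitrary \emph{convex combination} $A=\sum_j\lambda_j A_{u_j}$ remains invertible, and for this I would show every such $A$ is increasing in the sense of Example~\ref{2015_07_25_Exa_3}, i.e.\ $\langle Ah,Ah\rangle\ge(\text{const})\|h\|^2$, equivalently that the $A_u$ are ``uniformly close to a single isometry.''

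**Reducing invertibility to a quantitative estimate.** The hypothesis \eqref{2015_07_26_exotic} with small $\alpha$ controls how far $\sigma$ is from an isometry. Concretely, for a unit-speed geodesic $\gamma$ in $S^{n-1}(1)$ the curve $c=\sigma\circ\gamma$ satisfies the integral bound on $\|\ddot c+c\|$; note $\ddot\gamma+\gamma\equiv 0$, so this measures the geodesic curvature/acceleration defect of $c$. I would integrate: writing $c(t)$ against the ``great circle'' $\ol c(t):=c(0)\cos t+\dot c(0)\sin t$ through $c$ with the given initial data (this is exactly the curve $\ol c$ introduced in \eqref{2016_03_23_geodesic_new}), a Grönwall-type / variation-of-parameters argument on the ODE $\ddot y+y=(\ddot c+c)$ gives that $\|c(t)-\ol c(t)\|$ and $\|\dot c(t)-\dot{\ol c}(t)\|$ are bounded by $\int_0^\pi e^{t-s}\|\ddot c(s)+c(s)\|\,ds\le(e^\pi-1)e^{-\pi}\alpha\cdot(\text{something})$. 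Choosing $\alpha=1-1/\sqrt2$ makes these bounds small enough that, for every $u$ and every tangential $w$, $\|d\sigma_u(w)\|\ge(1-\varepsilon_0)\|w\|$ and the radial/tangential images stay nearly orthogonal, uniformly in $u$. In matrix terms: each $A_u$ satisfies $\|A_uh-(\text{isometry})h\|\le\varepsilon_0\|h\|$ for a suitably chosen reference isometry depending on $u$; more usefully, $\langle A_uh,A_uh\rangle\ge(1-\varepsilon_0)^2\|h\|^2$ with $\varepsilon_0<1$. This lower bound is preserved under convex combinations only if the \emph{images} $A_uh$ all point in roughly the same direction, so the cleanest route is: fix $h\in S^{n-1}(1)$, show all vectors $A_{u_j}h$ lie in a common half-space (their pairwise angles are $<\pi/2$), invoke the convexity argument of Lemma~\ref{lemN2}, and conclude $\|Ah\|>0$; since $h$ was arbitrary, $A$ is injective, hence invertible.

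**Carrying it out.** So the steps in order are: (1) identify $\partial\wt F(o)$ as the convex hull of the $A_u=d\wt F_u$, using $1$-homogeneity and the radial/tangential splitting; (2) from \eqref{2015_07_26_exotic} with $\alpha=1-1/\sqrt2$, derive via the $\ddot y+y$ ODE the uniform estimates $\|c(\pi)-\ol c(\pi)\|$, $\|\dot c-\dot{\ol c}\|$ small, hence $\|d\wt F_u(h)-\Lambda_u h\|\le\varepsilon_0\|h\|$ for an isometry $\Lambda_u$ of $\R^n$ with $\varepsilon_0<1/2$, say; (3) deduce that for any fixed $h\ne o$ and any two unit vectors $u,u'$, the vectors $A_uh$ and $A_{u'}h$ make an angle $<\pi/2$ (because both are within angle $\arcsin\varepsilon_0$ of a common isometry's image after accounting for the geodesic joining $u$ to $u'$ — here the integral bound along that connecting geodesic is what keeps $\Lambda_u$ and $\Lambda_{u'}$ close); (4) apply Lemma~\ref{lemN2} to conclude $o\notin\Conv(\{A_uh\})=\partial\wt F(o)\,h$, so every $A\in\partial\wt F(o)$ has $Ah\ne o$; since $h$ is arbitrary, $A$ is invertible, i.e.\ of maximal rank. \emph{The main obstacle} is step (3): transferring the per-geodesic acceleration bound \eqref{2015_07_26_exotic} into a \emph{global, $u$-uniform} comparison of all the $d\sigma_u$ to a single isometry — one must choose the connecting geodesics carefully and control how the constant $(e^\pi-1)e^{-\pi}$ enters, which is exactly why the specific value $\alpha=1-1/\sqrt2$ and the constants $\frac{\sqrt2-1}{2(e^\pi-1)}$ appearing in \eqref{2015_07_25_MA_1} show up; getting the numerology to close is the delicate part, whereas steps (1),(2),(4) are routine given the lemmas already established.
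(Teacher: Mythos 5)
Your steps (1) and (2) coincide with the paper's route: Lemma \ref{2015_07_26_lemma7} gives $\partial \wt{F}(o)=\Conv(\{A_v\,|\,v\in S^{n-1}(1)\})$ via the radial/tangential splitting, and the Gr\"onwall argument on $\ddot{y}+y$ is exactly Lemma \ref{2015_07_25_lemma1}, yielding $\|c(t)-\ol{c}(t)\|\le\alpha$ and $\|\dot{c}(t)-\dot{\ol{c}}(t)\|\le\alpha$. The genuine gap is your step (3), precisely the point you flag as ``the main obstacle'' and leave open. You want, for a fixed $h$, all images $A_{u}h$, $A_{u'}h$ to be pairwise at angle $<\pi/2$, and you propose to get this by showing every $A_u$ is close to a \emph{common} isometry, transported along the geodesic joining $u$ to $u'$. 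But the hypothesis \eqref{2015_07_26_exotic} is only a per-geodesic statement about $c=\sigma\circ\gamma$; upgrading it to a uniform comparison of all the $d\sigma_u$ with a single isometry is a substantially stronger assertion that you neither prove nor sketch in a checkable way, and whose numerology with $\alpha=1-1/\sqrt{2}\approx 0.29$ (not a small number) is far from evidently closable. As written, the proof does not go through.

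The paper's proof needs none of this, because of one observation your plan misses: fix the unit test vector $u$ and use $\sigma(u)$ itself as the normal of a single half-space. Writing $u=\gamma(t_0)$ on the unit-speed geodesic $\gamma$ with $\gamma(0)=v$, one computes from the homogeneity formulas that $A_v(u)=d\wt{F}_{\ell v}(u)=c(0)\cos t_0+\dot{c}(0)\sin t_0=\ol{c}(t_0)$, while $\sigma(u)=c(t_0)$; so the half-space condition $\langle A_v(u),\sigma(u)\rangle>0$ is exactly the per-geodesic inequality $\langle \ol{c}(t),c(t)\rangle>0$, which Lemmas \ref{2015_07_25_lemma2}, \ref{2015_07_25_lemma3}, \ref{2015_07_25_lemma4}, and \ref{2015_07_25_lemma5} extract from the single estimate $\|c-\ol{c}\|\le\alpha$ when $\alpha=1-1/\sqrt{2}$. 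Since every $A_v(u)$ has positive inner product with the \emph{same} vector $\sigma(u)$, any convex combination $\sum_i\lambda_iA_{z_i}$ satisfies $\langle\sum_i\lambda_iA_{z_i}(u),\sigma(u)\rangle>0$, hence cannot annihilate $u$; no pairwise comparison of images over different base points, and no global isometry, is required. A minor further point: Lemma \ref{lemN2} (two sets at mutually obtuse angles) is not the statement your step (4) needs; what is actually used is the elementary fact that vectors lying strictly on one side of a hyperplane have convex hull avoiding the origin, which is immediate once the common normal $\sigma(u)$ is in hand.
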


We need six lemmas to prove Theorem \ref{2015_07_25_theorem}. 
So, we first prove the lemmas before proving the theorem: 
Take any unit speed geodesic $\gamma:[0,\pi]\lra S^{n-1}(1)$, and fix it. Set 
\[
\ol{c}(t):= c(0)\cos t + \dot{c}(0)\sin t
\]
on $[0,\pi]$. 

\begin{lemma}\label{2015_07_25_lemma1} 
For any $t \in [0,\pi]$, 
\[
\sqrt {\| c(t) - \ol{c}(t) \|^2 +  \| \dot{c}(t) - \dot{\ol{c}}(t) \|^2} 
\le e^\pi \int^{\pi}_0 e^{-\theta} \| \ddot{c}(\theta) + c(\theta) \|d\theta
\]
holds. In particular, we have 
\begin{equation}\label{2015_07_25_lemma1_0}
\| c(t) - \ol{c}(t) \| \le \alpha
\end{equation}
for all $t \in [0,\pi]$. 
\end{lemma}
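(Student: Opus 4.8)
\textbf{Proof proposal for Lemma \ref{2015_07_25_lemma1}.}
The plan is to view the pair $(c,\dot c)$ as a solution of a first-order linear ODE system whose forcing term is exactly $\ddot c + c$, and to estimate the deviation of this solution from the solution of the unforced system, which is $(\ol c,\dot{\ol c})$. First I would introduce the vector-valued function $X(t):=(c(t),\dot c(t))\in\R^n\times\R^n$ and note that it satisfies
\[
\dot X(t) = J X(t) + G(t), \qquad J:=\begin{pmatrix} 0 & I \\ -I & 0 \end{pmatrix}, \quad G(t):=\bigl(0,\ddot c(t)+c(t)\bigr),
\]
while $\ol X(t):=(\ol c(t),\dot{\ol c}(t))$ solves $\dot{\ol X}=J\ol X$ with $\ol X(0)=X(0)$ (this last identity is just the definition \eqref{2016_03_23_geodesic_new} of $\ol c$ together with $\dot{\ol c}(0)=\dot c(0)$).

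Next I would apply the variation-of-constants (Duhamel) formula to get
\[
X(t)-\ol X(t) = \int_0^t e^{(t-\theta)J}\, G(\theta)\, d\theta,
\]
using $X(0)=\ol X(0)$ to kill the homogeneous term. Since $J$ is skew-symmetric, $e^{sJ}$ is an orthogonal matrix for every real $s$, so $\|e^{(t-\theta)J}G(\theta)\| = \|G(\theta)\| = \|\ddot c(\theta)+c(\theta)\|$. Taking norms under the integral sign yields
\[
\|X(t)-\ol X(t)\| \le \int_0^t \|\ddot c(\theta)+c(\theta)\|\, d\theta.
\]
To recover the exponential weight in the statement I would simply bound $1 \le e^{t-\theta}\le e^{\pi}e^{-\theta}$ for $0\le\theta\le t\le\pi$, giving $\int_0^t\|\ddot c(\theta)+c(\theta)\|\,d\theta \le e^{\pi}\int_0^{\pi}e^{-\theta}\|\ddot c(\theta)+c(\theta)\|\,d\theta$; since $\|X(t)-\ol X(t)\|^2 = \|c(t)-\ol c(t)\|^2 + \|\dot c(t)-\dot{\ol c}(t)\|^2$, this is exactly the claimed inequality. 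The final assertion \eqref{2015_07_25_lemma1_0} then follows by combining this with the hypothesis \eqref{2015_07_26_exotic}: $\|c(t)-\ol c(t)\| \le \|X(t)-\ol X(t)\| \le e^{\pi}\cdot e^{-\pi}\alpha = \alpha$.

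I do not expect a serious obstacle here; the only points requiring a little care are (i) making sure the ODE reformulation is set up so the forcing is precisely $\ddot c + c$ rather than something that also involves $\dot c$ — this works because the natural companion system for the operator $\tfrac{d^2}{dt^2}+\mathrm{id}$ has block matrix $J$ above — and (ii) checking the initial condition $\dot{\ol c}(0)=\dot c(0)$, which is immediate from differentiating \eqref{2016_03_23_geodesic_new}. Orthogonality of $e^{sJ}$ (equivalently, that the homogeneous flow is an isometry of $\R^{2n}$) is what makes the weight $e^{-\theta}$ come out cleanly with no extra constants, so I would state that explicitly.
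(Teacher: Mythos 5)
Your proof is correct, but it takes a genuinely different route from the paper's. The paper works with the scalar quantity $f(t)=\sqrt{\|c(t)-\ol{c}(t)\|^2+\|\dot{c}(t)-\dot{\ol{c}}(t)\|^2}$ and derives the pointwise differential inequality $f'(t)\le f(t)+\|\ddot{c}(t)+c(t)\|$ (using $\ddot{\ol{c}}=-\ol{c}$ and the triangle inequality), then integrates it with the factor $e^{-t}$ on intervals where $f>0$; the factors $e^{\pi}$ and $e^{-\theta}$ in the statement are precisely the Gronwall constants produced by this argument. You instead solve the problem exactly: writing $X=(c,\dot c)$ as a solution of $\dot X=JX+G$ with skew-symmetric $J$ and forcing $G=(0,\ddot c+c)$, Duhamel's formula with the orthogonal propagator $e^{(t-\theta)J}$ gives the cleaner bound $\|X(t)-\ol X(t)\|\le\int_0^t\|\ddot c(\theta)+c(\theta)\|\,d\theta$, from which the stated inequality follows by the trivial estimate $1\le e^{\pi-\theta}$ on $[0,\pi]$, and \eqref{2015_07_25_lemma1_0} follows from \eqref{2015_07_26_exotic} exactly as in the paper. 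Your setup checks out ($JX+G=(\dot c,\ddot c)$, $\dot{\ol X}=J\ol X$, and $X(0)=\ol X(0)$ by \eqref{2016_03_23_geodesic_new}), and orthogonality of $e^{sJ}$ is exactly why no constant is lost. What each approach buys: yours shows the exponential weights in the lemma are an artifact of the Gronwall method and yields a strictly sharper intermediate estimate, at the cost of invoking the matrix-exponential/variation-of-constants machinery; the paper's argument is more hands-on, needs only a scalar differential inequality, and sidesteps writing down the flow, but pays with the factor $e^{\pi}$ (which is then absorbed into the smallness hypothesis on $\alpha$, so both proofs serve the later applications equally well).
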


\begin{proof}
Let $f(t):= \sqrt {\| c(t) - \ol{c}(t) \|^2 +  \| \dot{c}(t) - \dot{\ol{c}}(t) \|^2}$, and let   
$
X(t):=
\left(
\begin{array}{c}
c(t)\\[1mm]
\dot{c}(t)   
\end{array}
\right), \
\ol{X}(t):=
\left(
\begin{array}{c}
\ol{c}(t)\\[1mm]
\dot{\ol{c}}(t)   
\end{array}
\right) 
\in \R^{2n}$. 
Then, $f(t)= \|X(t)- \ol{X}(t)\|$. Note that $f(0)=0$ because 
$\ol{c}(0)= c(0)$ and $\dot{\ol{c}}(0)= \dot{c}(0)$. 
Choose an open interval $(a, b) \subset [0,\pi]$ such that $f(t) >0$ 
on $(a,b)$ and $f(a) =0$. In this situation, we observe   
\begin{align}\label{2015_07_25_lemma1_1}
f'(t)
&= \frac{1}{\|X(t)- \ol{X}(t)\|}\left\langle X(t)- \ol{X}(t), X'(t)- \ol{X}'(t)\right\rangle\\[1mm] 
&\le \|X'(t)- \ol{X}'(t)\|=\sqrt {\| \dot{c}(t) - \dot{\ol{c}}(t) \|^2 +  \| \ddot{c}(t) - \ddot{\ol{c}}(t) \|^2}\notag
\end{align}
on $(a,b)$. Since $\ddot{\ol{c}}(t)= - \ol{c}(t)$, we have 
\begin{align}\label{2015_07_25_lemma1_2}
\| \ddot{c}(t) - \ddot{\ol{c}}(t) \| 
&= \| (\ddot{c}(t) + c(t)) - (\ddot{\ol{c}}(t)+ c(t)) \|\\[1mm]
&\le \| \ddot{c}(t) + c(t)\| + \|\ddot{\ol{c}}(t)+ c(t) \|= \| \ddot{c}(t) + c(t)\| + \|c(t)-\ol{c}(t) \|.\notag
\end{align}
Thus, by \eqref{2015_07_25_lemma1_2}, 
\begin{align}\label{2015_07_25_lemma1_3}
&\| \dot{c}(t) - \dot{\ol{c}}(t) \|^2 +  \| \ddot{c}(t) - \ddot{\ol{c}}(t) \|^2\\[1mm]
&\le \| \dot{c}(t) - \dot{\ol{c}}(t) \|^2 
+ \| \ddot{c}(t) + c(t)\|^2 
+ 2 \| \ddot{c}(t) + c(t)\| \cdot \|c(t)-\ol{c}(t) \| +\|c(t)-\ol{c}(t) \|^2\notag\\[1mm]
&\le f^2(t) + 2 \| \ddot{c}(t) + c(t)\| f(t) + \| \ddot{c}(t) + c(t)\|^2 
= (f(t) + \| \ddot{c}(t) + c(t)\|)^2.\notag
\end{align}
Hence, by \eqref{2015_07_25_lemma1_1} and \eqref{2015_07_25_lemma1_3}, we get 
\begin{equation}\label{2015_07_25_lemma1_4}
f'(t) \le f(t) + \| \ddot{c}(t) + c(t)\|
\end{equation}
on $(a,b)$. Since $e^{-t} (f'(t) - f(t)) \le e^{-t} \| \ddot{c}(t) + c(t)\|$ from \eqref{2015_07_25_lemma1_4}, and since $f(a)=0$, 
\[
\int_a^b e^{-t} \| \ddot{c}(t) + c(t)\|dt \ge 
\int_a^b e^{-t} (f'(t) - f(t))dt =\int_a^b (e^{-t} f(t))'dt= e^{-b}f(b),
\]
and hence 
$
f(b) \le e^b\int_a^b e^{-\theta} \| \ddot{c}(\theta) + c(\theta)\|d\theta 
\le e^b\int_0^b e^{-\theta} \| \ddot{c}(\theta) + c(\theta)\|d\theta$. 
Since the function $t \lra e^t\int_0^t e^{-\theta} \| \ddot{c}(\theta) + c(\theta)\|d\theta$ is increasing on $[0,\pi]$, 
\begin{equation}\label{2015_07_25_lemma1_5}
f(b) \le e^b\int_0^b e^{-\theta} \| \ddot{c}(\theta) + c(\theta)\|d\theta 
\le e^\pi\int_0^\pi e^{-\theta} \| \ddot{c}(\theta) + c(\theta)\|d\theta.
\end{equation}
If $f(t)=0$ for some $t \in [0,\pi]$, then 
\eqref{2015_07_25_lemma1_5} still holds for such a $t$. 
Therefore, 
\[
f(t) \le e^\pi\int_0^\pi e^{-\theta} \| \ddot{c}(\theta) + c(\theta)\|d\theta.
\]
holds on $[0,\pi]$.$\qedd$
\end{proof}

\begin{lemma}\label{2015_07_25_lemma2}
$\langle c(0), c(\theta) \rangle \cos \theta \ge \cos^2\theta - \alpha |\cos \theta|$ for all $\theta \in [0, \pi]$.
\end{lemma}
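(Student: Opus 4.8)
The plan is to compare $c$ with the model curve $\ol{c}$ via the estimate already obtained in Lemma~\ref{2015_07_25_lemma1} and then read off the claimed inequality by a one-line Cauchy--Schwarz argument. First I would record two elementary facts about $c$: since $c=\sigma\circ\gamma$ takes values in $S^{n-1}(1)$ we have $\|c(t)\|\equiv 1$, and differentiating $\langle c(t),c(t)\rangle\equiv 1$ at $t=0$ gives $\langle c(0),\dot{c}(0)\rangle=0$. Consequently, from the definition $\ol{c}(\theta)=c(0)\cos\theta+\dot{c}(0)\sin\theta$ we obtain
\[
\langle c(0),\ol{c}(\theta)\rangle=\|c(0)\|^2\cos\theta+\langle c(0),\dot{c}(0)\rangle\sin\theta=\cos\theta .
\]

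Next I would invoke \eqref{2015_07_25_lemma1_0}, which gives $\|c(\theta)-\ol{c}(\theta)\|\le\alpha$ on $[0,\pi]$. Combining this with the Cauchy--Schwarz inequality and $\|c(0)\|=1$,
\[
\bigl|\langle c(0),c(\theta)\rangle-\cos\theta\bigr|
=\bigl|\langle c(0),c(\theta)-\ol{c}(\theta)\rangle\bigr|
\le\|c(0)\|\,\|c(\theta)-\ol{c}(\theta)\|\le\alpha .
\]

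Finally, multiplying the last estimate by $|\cos\theta|\ge 0$ yields
\[
\bigl|\langle c(0),c(\theta)\rangle\cos\theta-\cos^2\theta\bigr|\le\alpha|\cos\theta|,
\]
and in particular $\langle c(0),c(\theta)\rangle\cos\theta\ge\cos^2\theta-\alpha|\cos\theta|$, which is the assertion. There is no genuine obstacle here; the only point that needs a little care is that one should not multiply through by $\cos\theta$ itself, whose sign varies on $[0,\pi]$, but rather by $|\cos\theta|$ — which is exactly why the statement is phrased with $|\cos\theta|$ on the right-hand side.
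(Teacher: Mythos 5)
Your proof is correct and follows essentially the same route as the paper: decompose $\langle c(0),c(\theta)\rangle$ through $\ol{c}(\theta)$ using $\|c(0)\|=1$ and $c(0)\perp\dot{c}(0)$ to get $\langle c(0),\ol{c}(\theta)\rangle=\cos\theta$, then apply the bound $\|c(\theta)-\ol{c}(\theta)\|\le\alpha$ from \eqref{2015_07_25_lemma1_0} together with Cauchy--Schwarz and $|\cos\theta|$. Your explicit derivation of the orthogonality and the remark about multiplying by $|\cos\theta|$ are minor elaborations of steps the paper takes for granted, not a different argument.
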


\begin{proof}Since $\|c(0)\|=1$ and $c(0)\perp\dot{c}(0)$, 
\begin{align}\label{2015_07_25_lemma2_1}
\langle c(0), c(\theta) \rangle 
&=\langle c(0), c(\theta)-\ol{c}(\theta)\rangle + \langle c(0), \ol{c}(\theta) \rangle\\[1mm]
&=\langle c(0), c(\theta)-\ol{c}(\theta)\rangle + \langle c(0), c(0)\cos \theta 
+ \dot{c}(0)\sin \theta \rangle\notag\\[1mm]
&=\langle c(0), c(\theta)-\ol{c}(\theta)\rangle + \cos \theta.\notag
\end{align}
By \eqref{2015_07_25_lemma1_0} and \eqref{2015_07_25_lemma2_1}, we have 
\[
\langle c(0), c(\theta) \rangle \cos \theta 
\ge \cos^2 \theta - \|c(\theta)-\ol{c}(\theta)\| \cdot |\cos \theta|
\ge \cos^2 \theta - \alpha |\cos \theta|.
\]
$\qedd$
\end{proof}

\begin{lemma}\label{2015_07_25_lemma3}
$\langle \dot{c}(0), c(\theta) \rangle \sin \theta 
\ge \|\dot{c}(0)\|^2\sin^2\theta - \alpha\|\dot{c}(0)\| \sin \theta$ 
for all $\theta \in [0, \pi]$.
\end{lemma}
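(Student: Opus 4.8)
The plan is to mimic the proof of Lemma \ref{2015_07_25_lemma2} almost verbatim, with $\dot c(0)$ playing the role that $c(0)$ played there and with $\sin\theta$ (which is $\ge 0$ on $[0,\pi]$) replacing $\cos\theta$; in particular no absolute value will be needed this time. First I would record the elementary fact already used in Lemma \ref{2015_07_25_lemma2}: since $c=\sigma\circ\gamma$ takes values in $S^{n-1}(1)$ we have $\|c(\theta)\|\equiv 1$, hence differentiating gives $\langle c(\theta),\dot c(\theta)\rangle\equiv 0$, and in particular $\langle c(0),\dot c(0)\rangle=0$. This orthogonality is exactly what makes a cross term vanish below.

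Next I would decompose $\langle \dot c(0),c(\theta)\rangle$ through the comparison curve $\ol c(\theta)=c(0)\cos\theta+\dot c(0)\sin\theta$:
\[
\langle \dot c(0),c(\theta)\rangle=\langle \dot c(0),c(\theta)-\ol c(\theta)\rangle+\langle \dot c(0),c(0)\cos\theta+\dot c(0)\sin\theta\rangle .
\]
Using $\langle c(0),\dot c(0)\rangle=0$, the last inner product collapses to $\|\dot c(0)\|^2\sin\theta$. Multiplying the whole identity by $\sin\theta$, which is nonnegative on $[0,\pi]$, yields
\[
\langle \dot c(0),c(\theta)\rangle\sin\theta=\langle \dot c(0),c(\theta)-\ol c(\theta)\rangle\sin\theta+\|\dot c(0)\|^2\sin^2\theta .
\]

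Finally I would bound the first term from below by the Cauchy--Schwarz inequality together with the pointwise estimate \eqref{2015_07_25_lemma1_0} of Lemma \ref{2015_07_25_lemma1}, namely $\langle \dot c(0),c(\theta)-\ol c(\theta)\rangle\ge-\|\dot c(0)\|\,\|c(\theta)-\ol c(\theta)\|\ge-\alpha\|\dot c(0)\|$, and again invoke $\sin\theta\ge 0$ to keep the inequality after multiplication. This gives precisely $\langle \dot c(0),c(\theta)\rangle\sin\theta\ge\|\dot c(0)\|^2\sin^2\theta-\alpha\|\dot c(0)\|\sin\theta$. There is essentially no obstacle: the proof is a one-line computation once the orthogonality $c(0)\perp\dot c(0)$ is noted and once one observes that, unlike $\cos\theta$ in Lemma \ref{2015_07_25_lemma2}, the factor $\sin\theta$ has a fixed sign on $[0,\pi]$, so no absolute value intervenes.
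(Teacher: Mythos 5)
Your proposal is correct and follows the paper's own argument essentially verbatim: the same decomposition of $\langle \dot{c}(0), c(\theta)\rangle$ through $\ol{c}(\theta)$, the same use of $c(0)\perp\dot{c}(0)$, and the same Cauchy--Schwarz bound with \eqref{2015_07_25_lemma1_0} after multiplying by $\sin\theta\ge 0$. No issues.
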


\begin{proof}Since $c(0)\perp\dot{c}(0)$, 
\begin{equation}\label{2015_07_25_lemma3_1}
\langle \dot{c}(0), c(\theta) \rangle 
=\langle \dot{c}(0), c(\theta)-\ol{c}(\theta)\rangle + \langle \dot{c}(0), \ol{c}(\theta) \rangle 
=\langle \dot{c}(0), c(\theta)-\ol{c}(\theta)\rangle + \|\dot{c}(0)\|^2\sin \theta.
\end{equation}
By \eqref{2015_07_25_lemma1_0} and \eqref{2015_07_25_lemma3_1}, we have 
\begin{align*}
\langle \dot{c}(0), c(\theta) \rangle \sin \theta 
&\ge \|\dot{c}(0)\|^2 \sin^2 \theta 
- \|\dot{c}(0)\| \cdot \|c(\theta)-\ol{c}(\theta)\| \sin \theta\\[1mm]
&\ge \|\dot{c}(0)\|^2\sin^2\theta - \alpha\|\dot{c}(0)\| \sin \theta.
\end{align*}
$\qedd$
\end{proof}

\begin{lemma}\label{2015_07_25_lemma4}
$|\|\dot{c}(0)\|-1| \le \alpha$.
\end{lemma}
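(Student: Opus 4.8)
The plan is to exploit the single observation that, since $\gamma$ takes values in $S^{n-1}(1)$ and $\sigma$ maps $S^{n-1}(1)$ into itself, the curve $c=\sigma\circ\gamma$ satisfies $\|c(t)\|=1$ for every $t\in[0,\pi]$; in particular $\|c(\pi/2)\|=1$. On the other hand, the comparison curve $\ol c(t)=c(0)\cos t+\dot c(0)\sin t$ evaluated at $t=\pi/2$ is exactly $\ol c(\pi/2)=\dot c(0)$, so $\|\ol c(\pi/2)\|=\|\dot c(0)\|$.

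Combining these two facts with the reverse triangle inequality in $\R^n$, I would write
\[
\bigl|\,\|\dot c(0)\|-1\,\bigr|
=\bigl|\,\|\ol c(\pi/2)\|-\|c(\pi/2)\|\,\bigr|
\le \|\ol c(\pi/2)-c(\pi/2)\|.
\]
Then the right-hand side is bounded by $\alpha$ directly from \eqref{2015_07_25_lemma1_0} in Lemma \ref{2015_07_25_lemma1} (applied at $t=\pi/2$), which gives the claimed estimate $|\,\|\dot c(0)\|-1\,|\le\alpha$.

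There is essentially no obstacle here: the only point requiring a moment's care is recognizing that $\ol c(\pi/2)=\dot c(0)$ and that membership of $c$ in the unit sphere is what pins $\|c(\pi/2)\|$ to $1$; everything else is the reverse triangle inequality plus the already-established $C^0$ comparison bound. I do not expect to need the full strength of Lemma \ref{2015_07_25_lemma1} (the combined $C^1$ estimate), only its displayed corollary \eqref{2015_07_25_lemma1_0}.
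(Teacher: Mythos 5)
Your argument is correct and is essentially the paper's own proof: the paper likewise uses $\ol{c}(\pi/2)=\dot{c}(0)$, $\|c(\pi/2)\|=1$, and the bound \eqref{2015_07_25_lemma1_0} at $t=\pi/2$, only writing the two one-sided triangle-inequality estimates instead of the single reverse triangle inequality. Nothing further is needed.
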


\begin{proof}
Since $\ol{c}(\pi/2) = \dot{c}(0)$, $\dot{c}(0) = \ol{c}(\pi/2)- c(\pi/2) + c(\pi/2)$. 
Since $\|c(\pi/2)\|=1$, by the triangle inequality and \eqref{2015_07_25_lemma1_0}, 
$\|\dot{c}(0)\| \le \|\ol{c}(\pi/2)- c(\pi/2)\| + \|c(\pi/2)\| \le \alpha +1$, and 
$\|\dot{c}(0)\| \ge \|c(\pi/2)\| - \|\ol{c}(\pi/2)- c(\pi/2)\| \ge 1-\alpha$. Hence, we get the assertion. 
$\qedd$
\end{proof}

\begin{lemma}\label{2015_07_25_lemma5}
For $\alpha =1- 1/\sqrt{2}$, 
we have 
\[
\langle \ol{c}(\theta), c(\theta)\rangle >0
\]
on $[0, \pi]$.
\end{lemma}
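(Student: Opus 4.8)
The plan is to expand $\ol{c}(\theta)=c(0)\cos\theta+\dot{c}(0)\sin\theta$ inside the inner product $\langle\ol{c}(\theta),c(\theta)\rangle$, apply Lemmas \ref{2015_07_25_lemma2} and \ref{2015_07_25_lemma3} to the two terms that appear, recognize the resulting dominant term as $\|\ol{c}(\theta)\|^2$, absorb the remaining error into a single Cauchy--Schwarz estimate, and then close the argument using the lower bound on $\|\dot{c}(0)\|$ supplied by Lemma \ref{2015_07_25_lemma4}.

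Concretely, I would first write
\[
\langle\ol{c}(\theta),c(\theta)\rangle
=\cos\theta\,\langle c(0),c(\theta)\rangle+\sin\theta\,\langle\dot{c}(0),c(\theta)\rangle,
\]
and substitute the inequalities of Lemmas \ref{2015_07_25_lemma2} and \ref{2015_07_25_lemma3} to obtain
\[
\langle\ol{c}(\theta),c(\theta)\rangle
\ge\cos^2\theta+\|\dot{c}(0)\|^2\sin^2\theta
-\alpha\bigl(|\cos\theta|+\|\dot{c}(0)\|\sin\theta\bigr).
\]
Since $\|c(0)\|=1$ and $c(0)\perp\dot{c}(0)$, the first two terms on the right are exactly $\|\ol{c}(\theta)\|^2$, while by the Cauchy--Schwarz inequality $|\cos\theta|+\|\dot{c}(0)\|\sin\theta\le\sqrt{2}\,\|\ol{c}(\theta)\|$; hence
\[
\langle\ol{c}(\theta),c(\theta)\rangle\ge\|\ol{c}(\theta)\|\bigl(\|\ol{c}(\theta)\|-\sqrt{2}\,\alpha\bigr).
\]
By Lemma \ref{2015_07_25_lemma4} we have $\|\dot{c}(0)\|\ge1-\alpha$, so $\|\ol{c}(\theta)\|^2=\cos^2\theta+\|\dot{c}(0)\|^2\sin^2\theta\ge\min\{1,(1-\alpha)^2\}=(1-\alpha)^2$, i.e.\ $\|\ol{c}(\theta)\|\ge1-\alpha$ for all $\theta\in[0,\pi]$. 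For $\alpha=1-1/\sqrt{2}$ one has $1-\alpha=1/\sqrt{2}>\sqrt{2}-1=\sqrt{2}\,\alpha$, so both $\|\ol{c}(\theta)\|\ge1-\alpha>0$ and $\|\ol{c}(\theta)\|-\sqrt{2}\,\alpha\ge(1-\alpha)-\sqrt{2}\,\alpha=1-(1+\sqrt{2})\alpha=\alpha>0$, whence $\langle\ol{c}(\theta),c(\theta)\rangle>0$ on $[0,\pi]$.

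The step demanding the most care is the passage from Lemmas \ref{2015_07_25_lemma2} and \ref{2015_07_25_lemma3} to the bound $\|\ol{c}(\theta)\|\bigl(\|\ol{c}(\theta)\|-\sqrt{2}\,\alpha\bigr)$: the quantity $\|\dot{c}(0)\|$ enters the lower bound both quadratically with a plus sign (where one wants it large) and linearly with a minus sign (where one wants it small), so substituting the two-sided estimate $\bigl|\,\|\dot{c}(0)\|-1\,\bigr|\le\alpha$ term by term would be too lossy. Keeping $\cos^2\theta+\|\dot{c}(0)\|^2\sin^2\theta$ intact as $\|\ol{c}(\theta)\|^2$ and spending a single Cauchy--Schwarz on the linear term is what makes the estimate clean. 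Everything else is elementary, and the particular value $\alpha=1-1/\sqrt{2}$ is used only to guarantee the scalar inequality $1-(1+\sqrt{2})\alpha>0$ (indeed any $\alpha<\sqrt{2}-1$ works).
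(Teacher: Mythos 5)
Your argument is correct: adding the inequalities of Lemmas \ref{2015_07_25_lemma2} and \ref{2015_07_25_lemma3}, identifying $\cos^2\theta+\|\dot{c}(0)\|^2\sin^2\theta$ with $\|\ol{c}(\theta)\|^2$ (legitimate since $\|c(0)\|=1$ and $c(0)\perp\dot{c}(0)$), bounding $|\cos\theta|+\|\dot{c}(0)\|\sin\theta\le\sqrt{2}\,\|\ol{c}(\theta)\|$ by Cauchy--Schwarz, and using only the lower bound $\|\dot{c}(0)\|\ge1-\alpha$ from Lemma \ref{2015_07_25_lemma4} gives $\langle\ol{c}(\theta),c(\theta)\rangle\ge\|\ol{c}(\theta)\|\bigl(\|\ol{c}(\theta)\|-\sqrt{2}\,\alpha\bigr)>0$, since $1-\alpha=1/\sqrt{2}>\sqrt{2}\,\alpha$ when $\alpha=1-1/\sqrt{2}$; every step checks out, including the identity $1-(1+\sqrt{2})\alpha=\alpha$. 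This is, however, a genuinely different route from the paper's proof, which splits into the two cases $\|\dot{c}(0)\|\ge1$ and $\|\dot{c}(0)\|<1$, uses \emph{both} sides of Lemma \ref{2015_07_25_lemma4} (in particular the upper bound $\|\dot{c}(0)\|\le1+\alpha$), replaces $|\cos\theta|$ by $1-\tfrac{1}{2}\sin^2\theta$, and then analyzes the resulting quadratic $\varphi(t)=\tfrac{\alpha}{2}t^2-\alpha(\alpha+1)t+1-\alpha$ in $t=\sin\theta$ via its vertex and the value $\varphi(1)$. Your version buys a shorter, case-free computation with a transparent threshold (any $\alpha<\sqrt{2}-1$ suffices, which is in fact a slightly wider admissible range than what the paper's stated choice exhibits), at the cost of the one observation that the quadratic part is exactly $\|\ol{c}(\theta)\|^2$; the paper's version is more pedestrian but stays entirely within termwise substitutions of the previous lemmas. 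Either argument serves the subsequent application in Theorem \ref{2015_07_25_theorem} equally well.
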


\begin{proof}
First we consider the case where $\|\dot{c}(0)\|\ge1$ holds. 
Then, by Lemmas \ref{2015_07_25_lemma2}, \ref{2015_07_25_lemma3}, and \ref{2015_07_25_lemma4}, 
we have 
\begin{align}\label{2015_07_25_lemma5_1}
\langle \ol{c}(\theta), c(\theta)\rangle
&=\langle c(0), c(\theta) \rangle \cos \theta 
+ \langle \dot{c}(0), c(\theta) \rangle \sin \theta\\[1mm] 
&\ge
\cos^2\theta - \alpha |\cos \theta| 
+ \|\dot{c}(0)\|^2\sin^2\theta - \alpha\|\dot{c}(0)\| \sin \theta\notag\\[1mm]
&\ge \cos^2\theta - \alpha |\cos \theta| 
+ \sin^2\theta - \alpha(\alpha +1) \sin \theta\notag\\[1mm]
&= 1 - \alpha |\cos \theta| - \alpha(\alpha +1) \sin \theta.\notag
\end{align}
Set $t = \sin \theta$. Note that $0\le t\le 1$. 
Since 
$|\cos \theta|= \sqrt{1-t^2}\le 1- t^2/2$, 
we have, 
by \eqref{2015_07_25_lemma5_1}, 
\begin{align}\label{2016_03_22_lemma5_new}
\langle \ol{c}(\theta), c(\theta)\rangle
&\ge 1 - \alpha |\cos \theta| - \alpha(\alpha +1) \sin \theta\\[1mm]
&\ge 1 +\alpha \left(\frac{t^2}{2} -1\right)-\alpha(\alpha +1)t\notag
\\[1mm]
&=\frac{\alpha}{2}t^2-\alpha(\alpha +1)t +1-\alpha\notag\\[1mm]
&=\frac{\alpha}{2}\left\{t -(\alpha +1)\right\}^2
-\frac{\alpha}{2}(\alpha +1)^2 +1-\alpha.\label{2016_03_22_lemma5_new2}
\end{align}
Consider the function $\varphi(t):=(\alpha/2) t^2-\alpha(\alpha +1)t +1-\alpha$ on $[0,1]$, which is a parabola. 
From \eqref{2016_03_22_lemma5_new2}, the axis of $\varphi$ 
is $t=\alpha+1$. Since $\alpha =1- 1/\sqrt{2}$, 
we see that $\alpha/2>0$ and $\alpha+1>1$, 
which imply that $\varphi$ is decreasing on $[0,1]$. 
Moreover, since $\alpha =1- 1/\sqrt{2}$, we see that 
\begin{equation}\label{2016_03_22_lemma5_new3}
\varphi (1)= -\left(\alpha +\frac{3}{4}\right)^2+\frac{25}{16}>0. 
\end{equation}
Hence, by \eqref{2016_03_22_lemma5_new}, \eqref{2016_03_22_lemma5_new2}, and \eqref{2016_03_22_lemma5_new3}, 
\[
\langle \ol{c}(\theta), c(\theta)\rangle \ge \varphi (1) >0
\]
holds on $[0, \pi]$.\par 
Finally we consider the case where $\|\dot{c}(0)\|<1$ holds. 
Then, since $\alpha =1- 1/\sqrt{2}$, 
we have, by Lemmas \ref{2015_07_25_lemma2}, \ref{2015_07_25_lemma3}, and \ref{2015_07_25_lemma4}, 
\begin{align*}
\langle \ol{c}(\theta), c(\theta)\rangle
&=\langle c(0), c(\theta) \rangle \cos \theta 
+ \langle \dot{c}(0), c(\theta) \rangle \sin \theta\\[1mm] 
&\ge
\cos^2\theta - \alpha |\cos \theta| 
+ \|\dot{c}(0)\|^2\sin^2\theta - \alpha\|\dot{c}(0)\| \sin \theta\\[1mm]
&> 
\cos^2\theta + \sin^2\theta - \alpha |\cos \theta| 
+ (\|\dot{c}(0)\|^2-1)\sin^2\theta - \alpha\sin \theta\notag\\[1mm]
&= 1 - \alpha (|\cos \theta|+\sin \theta) + (\|\dot{c}(0)\|+1)(\|\dot{c}(0)\|-1)\sin^2\theta\\[1mm]
&\ge 1 - \alpha (|\cos \theta|+\sin \theta) -\alpha (\|\dot{c}(0)\|+1)\sin^2\theta\\[1mm]
&> 1 - \sqrt{2}\alpha -2\alpha = 0
\end{align*}
for all $\theta \in [0,\pi]$.
$\qedd$
\end{proof}

\begin{lemma}\label{2015_07_26_lemma7}
Let $\wt{F}:\R^n\lra\R^n$ be the map defined by 
\begin{equation}\label{2015_07_26_lemma7_1}
\wt{F}(v) := 
\begin{cases}
\ \|v\| \displaystyle{\sigma \left( \frac{v}{\|v\|} \right)}  \ &\text{on} \ \R^n \setminus\{o\},\\[2mm]
\ o  \ &\text{when} \ v=o.
\end{cases}
\end{equation}
Then, the generalized differential of $\wt{F}$ at $o$ is given by 
\[
\partial \wt{F} (o)= \Conv (\{A_v \,|\, v \in S^{n-1}(1)\}),
\]
where $A_v$ is the linear map defined by $A_v (\lambda v):= \lambda \sigma (v)$ for all $\lambda \in \R$ and $A_v (w):= d\sigma_v (w)$ for 
all $w\in \R^n$ with $w\perp v$. 
\end{lemma}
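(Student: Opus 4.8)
The plan is to read off $\partial\wt{F}(o)$ directly from the defining formula \eqref{2015_08_02_Clarke_1}. First note that $\wt{F}$ is Lipschitz on all of $\R^n$: it is the cone over the self-map $\sigma$ of $S^{n-1}(1)$, and since $\sigma$ is smooth (hence Lipschitz) on the compact sphere, the identity $\|v_1-v_2\|^2=(\|v_1\|-\|v_2\|)^2+\|v_1\|\,\|v_2\|\,\bigl\|\tfrac{v_1}{\|v_1\|}-\tfrac{v_2}{\|v_2\|}\bigr\|^2$ (valid for $v_1,v_2\ne o$) makes the Lipschitz estimate for $\wt{F}$ elementary. Moreover $\wt{F}$ is $C^{\infty}$ on $\R^n\setminus\{o\}$, so the Rademacher exceptional set $E_{\wt{F}}$ is contained in $\{o\}$, and \eqref{2015_08_02_Clarke_1} becomes
\[
\partial\wt{F}(o)=\Conv\!\bigl(\bigl\{\,\lim_{i\to\infty}d\wt{F}_{v_i}\ \bigm|\ v_i\in\R^n\setminus\{o\},\ v_i\to o,\ \text{the limit exists}\,\bigr\}\bigr).
\]
It therefore suffices to identify the set of these limit linear maps.

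Next I would compute $d\wt{F}_v$ at an arbitrary $v\ne o$. Put $r:=\|v\|>0$, $u:=v/\|v\|$, and use the orthogonal splitting $\R^n=\R u\oplus u^{\perp}$. Writing $\wt{F}=g\cdot(\sigma\circ h)$ with $g(v):=\|v\|$ and $h(v):=v/\|v\|$, one has $dg_v(\xi)=\langle u,\xi\rangle$ and $dh_v(\xi)=\tfrac1r(\xi-\langle u,\xi\rangle u)$, so the product rule gives $d\wt{F}_v(\xi)=\langle u,\xi\rangle\,\sigma(u)+d\sigma_u(\xi-\langle u,\xi\rangle u)$. Evaluating on a radial vector $\lambda u$ gives $\lambda\,\sigma(u)$, and on $w\in u^{\perp}$ gives $d\sigma_u(w)$; comparing with the definition of $A_u$ in the statement, this says precisely $d\wt{F}_v=A_{u}=A_{v/\|v\|}$. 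In particular $d\wt{F}$ is constant along each open ray emanating from $o$ and depends only on the direction.

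Finally I would pass to the limit. Since $\sigma$ is smooth, the map $S^{n-1}(1)\ni v\mapsto A_v$, into the space of linear endomorphisms of $\R^n$ with any fixed norm, is continuous. Given any sequence $v_i\in\R^n\setminus\{o\}$ with $v_i\to o$ for which $\lim_i d\wt{F}_{v_i}=\lim_i A_{v_i/\|v_i\|}$ exists, compactness of $S^{n-1}(1)$ yields a subsequence with $v_i/\|v_i\|\to u\in S^{n-1}(1)$, whence the limit equals $A_u$ by continuity; thus every such limit lies in $\{A_v\mid v\in S^{n-1}(1)\}$. Conversely, for each fixed $v\in S^{n-1}(1)$ the choice $v_i:=v/i\to o$ gives $d\wt{F}_{v_i}\equiv A_v$, so the whole set $\{A_v\mid v\in S^{n-1}(1)\}$ is realized. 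Taking convex hulls gives $\partial\wt{F}(o)=\Conv(\{A_v\mid v\in S^{n-1}(1)\})$, as asserted.

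There is no deep obstacle here: the only genuine computation is the differential of $\wt{F}$ away from the origin, and the only points needing a little care are checking that $\wt{F}$ is indeed Lipschitz near $o$ (so that $\partial\wt{F}(o)$ is defined) and that $E_{\wt{F}}\subset\{o\}$. Once $d\wt{F}_v=A_{v/\|v\|}$ is established, the description of $\partial\wt{F}(o)$ follows purely from the continuity of $v\mapsto A_v$ and the compactness of the sphere.
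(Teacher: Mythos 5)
Your proposal is correct and follows essentially the same route as the paper: both compute the differential of $\wt{F}$ at any point $\ell v\ne o$ (the paper by direct difference quotients along radial and spherical directions, you by the product rule), obtaining $d\wt{F}_{\ell v}=A_v$, and then identify $\partial\wt{F}(o)$ as the convex hull of these limits. Your added details (Lipschitzness of $\wt{F}$, $E_{\wt{F}}\subset\{o\}$, and the compactness/continuity argument for the limit set) merely make explicit the step the paper leaves implicit.
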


\begin{proof} Let $v \in S^{n-1}(1)$, $\ell>0$, 
and $\lambda \in \R$. 
By \eqref{2015_07_26_lemma7_1}, 
\begin{equation}\label{2015_07_26_lemma7_2}
d\wt{F}_{\ell v} (\lambda v) 
= \frac{d}{dt} \wt{F}(\ell v + t\lambda v)\bigg|_{t=0}
= \frac{d}{dt} (\ell + \lambda t) \sigma (v)\bigg|_{t=0}
= \lambda\sigma (v).
\end{equation}
For any $w\in \R^n$ with $w\perp v$, by \eqref{2015_07_26_lemma7_1}, 
\begin{align}\label{2015_07_26_lemma7_3}
d\wt{F}_{\ell v} (w)
= \frac{d}{dt} \wt{F}(\ell v + tw)\bigg|_{t=0} 
&= \frac{d}{dt} \wt{F}\left( \sqrt{\ell^2 + \|w\|^2t^2}\cdot \frac{\ell v + tw}{\sqrt{\ell^2 + \|w\|^2t^2}}
\right)\bigg|_{t=0}\\[1mm]
&= \frac{d}{dt} \sqrt{\ell^2 + \|w\|^2t^2}\cdot\sigma\left( \frac{\ell v + tw}{\sqrt{\ell^2 + \|w\|^2t^2}}
\right)\bigg|_{t=0}\notag\\[1mm]
&= \ell \cdot \frac{d}{dt}\sigma\left( \frac{\ell v + tw}{\sqrt{\ell^2 + \|w\|^2t^2}}
\right)\bigg|_{t=0}\notag\\[1mm]
&= \ell \cdot d\sigma_v \left(\frac{w}{\ell}\right)= d\sigma_v \left(w\right)\notag
\end{align}
Hence, by \eqref{2015_07_26_lemma7_2} and \eqref{2015_07_26_lemma7_3}, 
we obtain $\partial \wt{F} (o)= \Conv (\{A_v \,|\, v \in S^{n-1}(1)\})$.
$\qedd$
\end{proof}

\begin{remark} In the proof of Lemma \ref{2015_07_26_lemma7}, 
we do not need the assumption that $\sigma$ satisfies \eqref{2015_07_26_exotic} for some $\alpha$. 
\end{remark}

\medskip

Now, we are going to give the proof 
of Theorem \ref{2015_07_25_theorem}:

\medskip\noindent
{\it (Proof of Theorem \ref{2015_07_25_theorem})} 
Let $\alpha >0$ be sufficiently small, and let 
$\wt{F}:\R^n\lra\R^n$ be the map defined 
by \eqref{2015_07_26_lemma7_1}. Let $u,v\in S^{n-1}(1)$ be any vectors. Then, we can take a unit speed geodesic 
$\gamma :[0, \pi]\lra S^{n-1}(1)$ emanating from $v= \gamma (0)$ satisfying $u= \gamma (t_0)$ for some $t_0\in [0,\pi]$. 
Setting $w:= \dot{\gamma} (0)$, we have 
$\gamma (t)= v \cos t + w \sin t$ on $[0,\pi]$. 
Take any $\ell >0$, and fix it. 
Since $c(0)= \sigma (\gamma(0))= \sigma (v)$ and 
$\dot{c}(0)= d\sigma_{\gamma(0)}(\dot{\gamma}(0)) = d\sigma_{v}(w)$, 
we see, 
by \eqref{2015_07_26_lemma7_2} and \eqref{2015_07_26_lemma7_3}, 
\begin{align*}
d\wt{F}_{\ell v} (u) = d\wt{F}_{\ell  v} (\gamma (t_0))
&= d\wt{F}_{\ell v}(v)\cos t_0 
+ d\wt{F}_{\ell v}(w) \sin t_0\\[1mm]
&= \sigma (v)\cos t_0 + d\sigma_v(w) \sin t_0\\[1mm]
&= c(0)\cos t_0 + \dot{c}(0) \sin t_0 = \ol{c}(t_0).
\end{align*}
Since $\sigma (u) = \sigma (\gamma (t_0)) = c(t_0)$, 
\begin{equation}\label{2016_03_22_theorem_new}
\langle d\wt{F}_{\ell v} (u), \sigma (u) \rangle 
= \langle d\wt{F}_{\ell v} (u), c(t_0) \rangle
= \langle \ol{c}(t_0), c(t_0) \rangle.  
\end{equation}
Thus, if $\alpha= 1-1/\sqrt{2}$, 
then, by Lemma \ref{2015_07_25_lemma5}, 
$\langle d\wt{F}_{\ell v} (u), \sigma (u) \rangle >0$ holds. This implies
\begin{equation}\label{2015_07_25_theorem_1}
\langle A_{v} (u), \sigma (u) \rangle >0
\end{equation}
for all $u, v \in S^{n-1}(1)$. 
Therefore, by Lemma \ref{2015_07_26_lemma7} and \eqref{2015_07_25_theorem_1}, $o$ is non-singular for $\wt{F}$. 
Indeed, suppose that $o$ is singular for $\wt{F}$. 
Set $G:= \{A_v \,|\, v \in S^{n-1}(1)\}$. Since 
\[
\Conv (G) = \bigg\{\sum_{i=1}^{\ell}\lambda_iA_{z_i}\,\bigg|\, 
A_{z_i} \in G, \sum_{i=1}^{\ell}\lambda_i =1, \lambda_i\ge 0 \,(i=1,2,\ldots, \ell)\bigg\},
\]
where $\ell < \infty$ (by Carath\'eodory's theorem), there exists $\sum_{i=1}^{\ell}\lambda_iA_{z_i} \in \partial \wt{F}(o)$ such that 
$\mathrm{rank} (\sum_{i=1}^{\ell}\lambda_iA_{z_i}) < n$. Thus, we may find a vector $v_0 \in S^{n-1}(1)$ such that $\sum_{i=1}^{\ell}\lambda_iA_{z_i} (v_0) =o$. 
By \eqref{2015_07_25_theorem_1}, 
\[
0= \langle o, \sigma (v_0) \rangle 
= \left\langle \sum_{i=1}^{\ell}\lambda_iA_{z_i} (v_0), \sigma (v_0) \right\rangle 
= \sum_{i=1}^{\ell}\lambda_i \langle A_{z_i} (v_0), \sigma (v_0) \rangle >0,
\]
which is a contradiction. Hence, $o$ is non-singular for $\wt{F}$.
$\qedd$

\bigskip

From \eqref{2016_03_22_theorem_new} 
in the proof of Theorem \ref{2015_07_25_theorem}, 
we have the following lemma, 
which is applied to the proof of the differentiable exotic 
sphere theorem II.

\begin{lemma}\label{2016_03_23_lem1} If $\sigma$ satisfies 
\[
\angle(\ol{c}(t), c(t)) < \frac{\pi}{2}
\]
for all geodesic segments 
$\gamma ([0,\pi])\subset S^{n-1}(1)$ with $\|\dot{\gamma}\|\equiv 1$, 
then $o \in \R^n$ is non-singular 
for the map $\wt{F}$ defined by \eqref{2015_07_26_lemma7_1}. 
Here, we do not assume that 
$\sigma$ satisfies \eqref{2015_07_26_exotic} for some $\alpha$. 
\end{lemma}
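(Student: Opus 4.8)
The plan is to transcribe the proof of Theorem~\ref{2015_07_25_theorem} essentially word for word, with the only change being that the single appeal to Lemma~\ref{2015_07_25_lemma5} (which is where the quantitative value $\alpha=1-1/\sqrt2$ entered) is replaced by the hypothesis $\angle(\ol{c}(t),c(t))<\pi/2$. Since Lemma~\ref{2015_07_26_lemma7} already identifies $\partial\wt{F}(o)=\Conv(\{A_v\,|\,v\in S^{n-1}(1)\})$ without using \eqref{2015_07_26_exotic}, the whole task is to verify the sign inequality \eqref{2015_07_25_theorem_1} under the new assumption and then run the unchanged Carath\'eodory argument.

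First I would fix any $u,v\in S^{n-1}(1)$ and, exactly as in the proof of Theorem~\ref{2015_07_25_theorem}, choose a unit speed geodesic $\gamma:[0,\pi]\lra S^{n-1}(1)$ with $\gamma(0)=v$ and $\gamma(t_0)=u$ for some $t_0\in[0,\pi]$ (possible because any two points of a round sphere lie on a common great circle at distance $\le\pi$). Writing $w:=\dot\gamma(0)$, $c:=\sigma\circ\gamma$, and $\ol{c}(t):=c(0)\cos t+\dot c(0)\sin t$, the computation leading to \eqref{2016_03_22_theorem_new} gives, for every $\ell>0$,
\[
\langle d\wt{F}_{\ell v}(u),\sigma(u)\rangle=\langle\ol{c}(t_0),c(t_0)\rangle .
\]
Since $\|c(t_0)\|=1$ and the hypothesis $\angle(\ol{c}(t_0),c(t_0))<\pi/2$ in particular forces $\ol{c}(t_0)\ne o$, we obtain $\langle\ol{c}(t_0),c(t_0)\rangle=\|\ol{c}(t_0)\|\cos\angle(\ol{c}(t_0),c(t_0))>0$. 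As $d\wt{F}_{\ell v}=A_v$ for $\ell>0$ by \eqref{2015_07_26_lemma7_2} and \eqref{2015_07_26_lemma7_3}, this yields $\langle A_v(u),\sigma(u)\rangle>0$ for all $u,v\in S^{n-1}(1)$, which is precisely \eqref{2015_07_25_theorem_1}.

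To conclude, I would repeat verbatim the final paragraph of the proof of Theorem~\ref{2015_07_25_theorem}: if $o$ were singular of $\wt{F}$, then by Lemma~\ref{2015_07_26_lemma7} there is a convex combination $\sum_{i=1}^{\ell}\lambda_i A_{z_i}\in\partial\wt{F}(o)$ (with $z_i\in S^{n-1}(1)$, $\lambda_i\ge0$, $\sum_i\lambda_i=1$, and $\ell<\infty$ by Carath\'eodory's theorem) of rank $<n$, hence $\sum_i\lambda_i A_{z_i}(v_0)=o$ for some $v_0\in S^{n-1}(1)$; pairing with $\sigma(v_0)$ and using \eqref{2015_07_25_theorem_1} gives $0=\sum_i\lambda_i\langle A_{z_i}(v_0),\sigma(v_0)\rangle>0$, a contradiction.

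There is essentially no new obstacle, and I do not expect any hard computation. The only point deserving a word of care is the passage from ``$\angle(\ol{c}(t),c(t))<\pi/2$ for \emph{all} unit speed geodesics of $S^{n-1}(1)$'' to ``$\langle A_v(u),\sigma(u)\rangle>0$ for \emph{all} $u,v$'': one must note that the prescribed pair $(u,v)$ can always be realized as $(\gamma(t_0),\gamma(0))$ on such a geodesic, so that the hypothesis does apply to the corresponding $c$ and $\ol{c}$; everything else is a direct reuse of material already established in Lemmas~\ref{2015_07_26_lemma7} and the proof of Theorem~\ref{2015_07_25_theorem}.
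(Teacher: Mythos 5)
Your proposal is correct and is essentially the paper's own argument: the paper derives this lemma directly from \eqref{2016_03_22_theorem_new} in the proof of Theorem \ref{2015_07_25_theorem}, replacing the appeal to Lemma \ref{2015_07_25_lemma5} by the hypothesis $\angle(\ol{c}(t),c(t))<\pi/2$ to obtain \eqref{2015_07_25_theorem_1}, and then running the same Carath\'eodory contradiction, noting as you do that Lemma \ref{2015_07_26_lemma7} never uses \eqref{2015_07_26_exotic}.
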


We need a corollary below 
of Theorem \ref{2015_07_25_theorem} to prove the differentiable exotic 
sphere theorem I 
in the case where \eqref{2015_07_25_MA_1} holds.
We prepare the following lemma for proving the corollary.

\begin{lemma}\label{2015_07_26_lemma8} A smooth curve $c:(a,b)\lra S^{n-1}(1)\subset \R^n$ satisfies
\[
\|\ddot{c}(t)+ c(t)\|^2= \|\ddot{c}(t)\|^2- 2\|\dot{c}(t)\|^2+1.
\]
\end{lemma}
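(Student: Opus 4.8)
The plan is to exploit the single constraint that $c$ takes values in the unit sphere, namely $\langle c(t),c(t)\rangle\equiv 1$, and differentiate it twice. First I would differentiate $\langle c,c\rangle=1$ once to obtain $\langle\dot c,c\rangle=0$, which records the familiar fact that the velocity of a curve on the sphere is orthogonal to its position vector. Differentiating once more gives $\langle\ddot c,c\rangle+\langle\dot c,\dot c\rangle=0$, hence $\langle\ddot c(t),c(t)\rangle=-\|\dot c(t)\|^2$ for all $t\in(a,b)$.

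With this identity in hand, the remaining step is simply to expand the left-hand side of the claimed formula:
\[
\|\ddot c(t)+c(t)\|^2=\|\ddot c(t)\|^2+2\langle\ddot c(t),c(t)\rangle+\|c(t)\|^2.
\]
Substituting $\langle\ddot c(t),c(t)\rangle=-\|\dot c(t)\|^2$ from the second differentiation and $\|c(t)\|^2=1$ from the sphere constraint yields exactly $\|\ddot c(t)\|^2-2\|\dot c(t)\|^2+1$, which is the assertion.

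There is no real obstacle here: the entire content is the two-fold differentiation of the normalization $\|c\|\equiv 1$, and everything else is the expansion of a single squared norm. The only point to keep straight is that both differentiations are legitimate because $c$ is assumed smooth, so $\dot c$ and $\ddot c$ exist and the inner products may be differentiated term by term.
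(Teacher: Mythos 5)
Your proof is correct and is essentially identical to the paper's: both differentiate $\langle c,c\rangle\equiv 1$ twice to get $\langle\ddot c,c\rangle=-\|\dot c\|^2$ and then expand $\|\ddot c+c\|^2$. Nothing is missing.
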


\begin{proof} It is clear that $\|\ddot{c}(t)+ c(t)\|^2 
= \|\ddot{c}\|^2 +2\langle \ddot{c}(t), c(t) \rangle+1$, where note that $\|c(t)\| \equiv1$ on $(a,b)$. 
Since $1= \|c(t)\|^2= \langle c(t), c(t) \rangle$ 
for all $t \in (a,b)$, 
$\langle \dot{c}(t), c(t) \rangle\equiv 0$ holds. We thus 
have 
$\langle \ddot{c}(t), c(t) \rangle= - \langle \dot{c}(t), \dot{c}(t) \rangle = -\|\dot{c}(t)\|^2$. 
Hence, we get 
\[
\|\ddot{c}(t)+ c(t)\|^2
= \|\ddot{c}\|^2 +2\langle \ddot{c}(t), c(t) \rangle+1
= \|\ddot{c}(t)\|^2- 2\|\dot{c}(t)\|^2+1.
\]
$\qedd$
\end{proof}

\begin{corollary}\label{2015_08_31_cor}
Let $\sigma : S^{n-1}(1)\lra S^{n-1}(1)$ be a diffeomorphism, 
$c:=\sigma\circ\gamma$ the curve in $S^{n-1}(1)$, 
where $\gamma:[0,\pi]\lra S^{n-1}(1)$ denotes a geodesic segment, 
and $\wt{F}:\R^n\lra\R^n$ the map defined 
by \eqref{2015_07_26_lemma7_1}. 
If $\bL(\sigma)$ satisfies \eqref{2015_07_25_MA_1} 
for all geodesic segments 
$\gamma ([0,\pi])\subset S^{n-1}(1)$ with $\|\dot{\gamma}\|\equiv 1$, 
then $o \in \R^n$ is non-singular for $\wt{F}$.
\end{corollary}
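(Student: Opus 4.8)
The plan is to deduce the corollary from Theorem~\ref{2015_07_25_theorem}: I will show that the two inequalities in \eqref{2015_07_25_MA_1} force $\sigma$ to satisfy the integral hypothesis \eqref{2015_07_26_exotic} with $\alpha = 1 - 1/\sqrt{2}$, and then invoke that theorem. The constants in \eqref{2015_07_25_MA_1} are arranged precisely so that the two bounds combine to give exactly the estimate \eqref{2015_07_26_exotic} requires.

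First I would record a pointwise lower bound for the speed of $c = \sigma\circ\gamma$. Fix a unit-speed geodesic $\gamma:[0,\pi]\lra S^{n-1}(1)$. Since $\gamma$ is parametrized by arc length, $\dot{\gamma}(t)$ is a unit tangent vector, and the chain rule gives $\dot{c}(t) = d\sigma_{\gamma(t)}(\dot{\gamma}(t))$. Because the Euclidean chordal distance and the intrinsic distance on $S^{n-1}(1)$ agree to first order, the bi-Lipschitz lower bound $\bL(\sigma)^{-1}\|u-v\| \le \|\sigma(u)-\sigma(v)\|$ of \eqref{2015_08_26_MA_3} passes to the differential, so that $\|d\sigma_v(w)\| \ge \bL(\sigma)^{-1}\|w\|$ for every $w \in T_v S^{n-1}(1)$; applying this with $w = \dot{\gamma}(t)$ yields $\|\dot{c}(t)\| \ge \bL(\sigma)^{-1}$ for all $t \in [0,\pi]$. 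This passage to the differential is the only place that needs a little care, and it is the step I would flag as the mild technical point of the argument.

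Next I would combine this with Lemma~\ref{2015_07_26_lemma8}, which states $\|\ddot{c}(t) + c(t)\|^2 = \|\ddot{c}(t)\|^2 - 2\|\dot{c}(t)\|^2 + 1$. Using $\|\dot{c}(t)\|^2 \ge \bL(\sigma)^{-2}$ and then the second inequality of \eqref{2015_07_25_MA_1} in the form $\|\ddot{c}(t)\|^2 \le \bL(\sigma)^{-2} + \left\{\frac{\sqrt{2}-1}{2(e^\pi-1)}\right\}^2$, I obtain
\[
\|\ddot{c}(t)+c(t)\|^2 \le 1 - \bL(\sigma)^{-2} + \left\{\frac{\sqrt{2}-1}{2(e^\pi-1)}\right\}^2 .
\]
Feeding in the first inequality of \eqref{2015_07_25_MA_1}, namely $1 - \bL(\sigma)^{-2} \le \left\{\frac{\sqrt{2}-1}{2(e^\pi-1)}\right\}^2$, this collapses to $\|\ddot{c}(t)+c(t)\|^2 \le 2\left\{\frac{\sqrt{2}-1}{2(e^\pi-1)}\right\}^2$, hence $\|\ddot{c}(t)+c(t)\| \le \frac{\sqrt{2}(\sqrt{2}-1)}{2(e^\pi-1)} = \frac{2-\sqrt{2}}{2(e^\pi-1)}$ for every $t$ and every such $\gamma$.

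Finally I would integrate. Since $\int_0^\pi e^{-t}\,dt = 1 - e^{-\pi}$ and $(1 - e^{-\pi})/(e^\pi-1) = e^{-\pi}$, the pointwise bound gives
\[
\int_0^\pi e^{-t}\|\ddot{c}(t)+c(t)\|\,dt \le \frac{2-\sqrt{2}}{2(e^\pi-1)}(1-e^{-\pi}) = \frac{2-\sqrt{2}}{2}\,e^{-\pi} = e^{-\pi}\left(1 - \frac{1}{\sqrt{2}}\right),
\]
which is exactly \eqref{2015_07_26_exotic} with $\alpha = 1 - 1/\sqrt{2}$. Theorem~\ref{2015_07_25_theorem} then applies verbatim and shows that $o \in \R^n$ is non-singular of $\wt{F}$, which proves the corollary. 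Apart from the differential estimate noted above I anticipate no real obstacle; the remainder is a short computation in which the numbers in \eqref{2015_07_25_MA_1} have evidently been chosen to make the two bounds balance exactly.
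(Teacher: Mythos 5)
Your proposal is correct and follows essentially the same route as the paper: both derive $\|\dot{c}\|\ge\bL(\sigma)^{-1}$ from the chordal bi-Lipschitz bound (the paper does this by the same limit-quotient computation you sketch), combine it with Lemma \ref{2015_07_26_lemma8} and the two inequalities of \eqref{2015_07_25_MA_1} to get the pointwise bound $\|\ddot{c}+c\|\le\frac{2-\sqrt{2}}{2(e^\pi-1)}$, integrate against $e^{-t}$ to obtain \eqref{2015_07_26_exotic} with $\alpha=1-1/\sqrt{2}$, and then invoke Theorem \ref{2015_07_25_theorem}.
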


\begin{proof}Since 
\[
\|\dot{c}(t)\|
= \lim_{h\to0}\frac{\|c(t+h) - c(t)\|}{|h|}
= \lim_{h\to0}
\frac{\|\sigma(\gamma(t+h)) - \sigma (\gamma(t))\|}
{\|\gamma(t+h) - \gamma(t)\|}
\cdot \frac{\|\gamma(t+h) - \gamma(t)\|}{|h|},
\]
\begin{equation}\label{2015_08_31_cor1}
\bL(\sigma)^{-1}\leq \|\dot c\|\leq \bL(\sigma)
\end{equation}
holds on any geodesic segment $\gamma ([0,\pi])\subset S^{n-1}(1)$. 
By Lemma \ref{2015_07_26_lemma8}, \eqref{2015_07_25_MA_1}, 
and \eqref{2015_08_31_cor1}, 
\begin{align*}
\int_0^\pi e^{-t}\|\ddot{c}(t)+ c(t)\|\,dt 
&= \int_0^\pi e^{-t}
\sqrt{\|\ddot{c}(t)\|^2- 2\|\dot{c}(t)\|^2+1}\,dt \\[1mm]
&\le \int_0^\pi e^{-t} 
\sqrt{\|\ddot{c}(t)\|^2- 2\bL(\sigma)^{-2} +1}\,dt\\[1mm]
&\le \int_0^\pi e^{-t}dt\, 
\sqrt{2 \left\{ \frac{\sqrt{2} -1 }{2(e^\pi -1)} \right\}^2} \\[1mm]
&= (1-e^{-\pi}) \cdot \frac{2- \sqrt{2}}{2(e^\pi-1)} \\[1mm]
&= e^{-\pi}\left(1- \frac{1}{\sqrt{2}}\right).
\end{align*}
Hence, $\sigma$ satisfies \eqref{2015_07_26_exotic} 
for $\alpha=1-1/\sqrt{2}$. 
By Theorem \ref{2015_07_25_theorem}, 
$o$ is non-singular for $\wt{F}$.$\qedd$
\end{proof}

\subsection{Preliminary to the proof 
in assuming \eqref{2015_08_12_MA_2}}\label{sect2_new2}%%%%%%%%%%%%%%%%%%%%%%%%%%%%%%%%%%%%%%%%%%%%%%%%%%%%%%%%%%%%%%%%%%%%%%%%%%%%%%%%%%%%%%%%%%%%%%%%%%%%%%%%%%%%%%%%%%%%%%%%%%%%%%%%%%%%%%%%%%%%%%%%%%%%%%%%%%%%%%%%%%%%%%%%%%%%%%%%

\begin{lemma}\label{2015_08_13_lem_second}
Let $\sigma: S^{n-1}(1)\lra S^{n-1}(1)$ be 
a diffeomorphism, 
where we do not assume \eqref{2015_07_26_exotic}, 
$\bL(\sigma)$ the bi-Lipschitz constant of it, 
and $\wt{F}:\R^n\lra\R^n$ the map defined 
by \eqref{2015_07_26_lemma7_1}. Then, we have 
\begin{equation}\label{2015_08_13_lem_second_1}
\bL(\sigma)^{-1}\|u-v\| \le \|\wt{F}(u)-\wt{F}(v)\|\le \bL(\sigma)\|u-v\|
\end{equation}
for all $u,v \in \R^n$, i.e., $\wt{F}$ is bi-Lipschitz.
\end{lemma}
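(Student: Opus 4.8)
The plan is to exploit that $\wt{F}$ is the radial (conical) extension of $\sigma$: since $\sigma$ maps $S^{n-1}(1)$ to itself, one has $\|\wt{F}(v)\|=\|v\|$ for every $v\in\R^n$, and the estimate \eqref{2015_08_13_lem_second_1} for a pair of points should reduce, by a purely algebraic identity, to the defining bi-Lipschitz inequality for $\sigma$ on the unit sphere. First I would dispose of the trivial case in which one of the two points, say $v$, equals $o$: then $\wt{F}(v)=o$ and $\|\wt{F}(u)-\wt{F}(v)\|=\|u\|\,\bigl\|\sigma(u/\|u\|)\bigr\|=\|u\|=\|u-v\|$, so \eqref{2015_08_13_lem_second_1} holds provided $\bL(\sigma)\ge 1$. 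That $\bL(\sigma)\ge 1$ is immediate: any admissible $\ell$ in \eqref{2015_08_26_MA_3} satisfies $\ell^{-1}\|u-v\|\le\ell\|u-v\|$ for $u\ne v$, whence $\ell\ge 1$; moreover, letting $\ell\downarrow\bL(\sigma)$ shows that the two inequalities of \eqref{2015_08_26_MA_3} remain valid with $\ell=\bL(\sigma)$ itself.

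For $u,v\in\R^n\setminus\{o\}$ I would write $u=r\xi$, $v=s\eta$ with $r=\|u\|>0$, $s=\|v\|>0$ and $\xi,\eta\in S^{n-1}(1)$, so that $\wt{F}(u)=r\,\sigma(\xi)$ and $\wt{F}(v)=s\,\sigma(\eta)$. The key observation is the elementary identity
\[
\|r a-s b\|^2=(r-s)^2+rs\,\|a-b\|^2,
\]
valid for all $a,b\in S^{n-1}(1)$ and all $r,s\in\R$, which follows at once from $\langle a,b\rangle=1-\tfrac12\|a-b\|^2$. Applying it with $(a,b)=(\xi,\eta)$ and then with $(a,b)=(\sigma(\xi),\sigma(\eta))$ gives
\[
\|u-v\|^2=(r-s)^2+rs\,\|\xi-\eta\|^2
\qquad\text{and}\qquad
\|\wt{F}(u)-\wt{F}(v)\|^2=(r-s)^2+rs\,\|\sigma(\xi)-\sigma(\eta)\|^2 .
\]

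Writing $L:=\bL(\sigma)\ (\ge 1)$, I would then combine the second identity with the inequalities $L^{-1}\|\xi-\eta\|\le\|\sigma(\xi)-\sigma(\eta)\|\le L\|\xi-\eta\|$ and with $rs\ge 0$ to obtain
\[
L^{-2}\bigl[(r-s)^2+rs\,\|\xi-\eta\|^2\bigr]\le\|\wt{F}(u)-\wt{F}(v)\|^2\le L^{2}\bigl[(r-s)^2+rs\,\|\xi-\eta\|^2\bigr],
\]
where in the lower (resp.\ upper) bound I also use $L^{-2}\le 1$ (resp.\ $L^{2}\ge 1$) to absorb the radial term $(r-s)^2$. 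By the first identity the bracket equals $\|u-v\|^2$, so taking square roots yields \eqref{2015_08_13_lem_second_1}.

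I do not expect a serious obstacle here: the identity $\|ra-sb\|^2=(r-s)^2+rs\|a-b\|^2$ does all the geometric work, cleanly separating the angular discrepancy (the passage from $\|\xi-\eta\|$ to $\|\sigma(\xi)-\sigma(\eta)\|$, the only place the hypothesis on $\sigma$ enters) from the radial part $(r-s)^2$, which $\wt{F}$ preserves verbatim. The only points deserving a word of care are the inequality $\bL(\sigma)\ge 1$ — needed both for the origin case and to absorb $(r-s)^2$ — and the remark that $\bL(\sigma)$ is itself an admissible constant in \eqref{2015_08_26_MA_3}; both are settled in one line as above.
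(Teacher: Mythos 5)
Your proof is correct and is essentially the paper's own argument in a tidier packaging: both expand $\|\wt{F}(u)-\wt{F}(v)\|^2$ and $\|u-v\|^2$ by the law of cosines (your identity $\|ra-sb\|^2=(r-s)^2+rs\|a-b\|^2$ is exactly the paper's computation with the radial term $\|u-v\|^2-2(\|u\|\,\|v\|-\langle u,v\rangle)=(r-s)^2$ made explicit), reduce to the spherical estimate for $\sigma$, and absorb the radial part using $\bL(\sigma)\ge 1$. No essential difference in method, so nothing further is needed.
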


\begin{proof}Take any $u, v \in \R^n$. If $u=0$, or $v=0$, 
then the inequality holds. Hence, we assume $u\not=0$ and $v\not=0$. 
Set $\tilde{u}:= u/\|u\|$ and $\tilde{v}:= v/\|v\|$. 
Note that if $\tilde{u}= \tilde{v}$, then \eqref{2015_08_13_lem_second_1} trivially 
holds. Hence, we assume $\tilde{u}\not= \tilde{v}$. 
Since $\|\wt{F}(u)\| = \|u\|$ and 
$
\langle \wt{F} (u), \wt{F}(v) \rangle 
= \|u\|\cdot\|v\|\langle \sigma(\tilde{u}), \sigma(\tilde{v}) \rangle
$, 
\begin{equation}\label{2015_08_28_Lem2.10_1}
\| \wt{F} (u)- \wt{F}(v) \|^2
= \|u\|^2 + \|v\|^2 -2\|u\|\cdot\|v\| 
\langle \sigma(\tilde{u}),\sigma(\tilde{v}) \rangle
\end{equation}
Since $\|u-v\|^2= \|u\|^2 + \|v\|^2 -2\|u\|\cdot\|v\| 
\langle \tilde{u},\tilde{v} \rangle$, 
we have, by \eqref{2015_08_28_Lem2.10_1}, 
\begin{equation}\label{2015_08_28_Lem2.10_2}
\| \wt{F} (u)- \wt{F}(v) \|^2 - \|u-v\|^2
= 2\|u\|\cdot\|v\| 
(\langle \tilde{u},\tilde{v} \rangle 
- \langle \sigma(\tilde{u}),\sigma(\tilde{v}) \rangle)
\end{equation}
Similarly, we see 
\begin{equation}\label{2015_08_28_Lem2.10_3}
\| \sigma (\tilde{u})- \sigma(\tilde{v}) \|^2 - \|\tilde{u}-\tilde{v}\|^2
= 2(\langle \tilde{u},\tilde{v} \rangle 
- \langle \sigma(\tilde{u}),\sigma(\tilde{v}) \rangle)
\end{equation}
Set $\ell^2:=\| \sigma (\tilde{u})- \sigma(\tilde{v}) \|^2/\|\tilde{u}-\tilde{v}\|^2$. 
Then, by \eqref{2015_08_28_Lem2.10_2} and \eqref{2015_08_28_Lem2.10_3}, 
\begin{align}\label{2015_08_28_Lem2.10_4}
\|\wt{F} (u)-\wt{F}(v) \|^2 -\| u - v \|^2
&= \|u\|\cdot\|v\| 
\{\| \sigma (\tilde{u})- \sigma(\tilde{v}) \|^2 - \|\tilde{u}-\tilde{v}\|^2\}\\[1mm]
&= \|u\|\cdot\|v\|(\ell^2-1)\|\tilde{u}-\tilde{v}\|^2\notag\\[1mm]
&=2 (\ell^2-1)(\|u\|\cdot\|v\|-\langle u, v\rangle).\notag
\end{align}
Since $\bL(\sigma)\ge1$, $\|u\|\cdot\|v\|-\langle u, v\rangle \ge0$ and $2(\|u\|\cdot\|v\| - \langle u,v\rangle)\le \|u-v\|^2$, we see, 
by \eqref{2015_08_28_Lem2.10_4}, 
\begin{align*}
\|\wt{F} (u)- \wt{F}(v) \|^2 
&= \| u - v \|^2 +2 (\ell^2-1)(\|u\|\cdot\|v\|-\langle u, v\rangle)\\[1mm]
&\le \| u - v \|^2 +
2 (\bL(\sigma)^2-1)(\|u\|\cdot\|v\|-\langle u, v\rangle)\\[1mm]
&= \bL(\sigma)^2 \| u - v \|^2 +(1-\bL(\sigma)^2)
\{
\| u - v \|^2 -2(\|u\|\cdot\|v\|-\langle u, v\rangle)
\}\\[1mm]
&\le \bL(\sigma)^2 \| u - v \|^2.
\end{align*}
As well as above, we have 
$\|\wt{F} (u)- \wt{F}(v) \|^2 \ge \bL(\sigma)^{-2} \| u - v \|^2$.$\qedd$
\end{proof}

\subsection{The proofs}\label{sect2_new3}%%%%%%%%%%%%%%%%%%%%%%%%%%%%%%%%%%%%%%%%%%%%%%%%%%%%%%%%%%%%%%%%%%%%%%%%%%%%%%%%%%%%%%%%%%%%%%%%%%%%%%%%%%%%%%%%%%%%%%%%%%%%%%%%%%%%%%%%%%%%%%%%%%%%%%%%%%%%%%%%%%%%%%%%%%%%%%%%

All notations in this subsection are the same 
as those defined in Sect.\,\ref{2016_01_11_new}.

\bigskip\noindent
{\it (Proof of the differentiable exotic sphere 
theorem I in the case of \eqref{2015_07_25_MA_1})} 
We assume here $d_{M_i}(p_i,q_i):= \pi$ for each $i=1,2$ 
by normalizing the metric, where $d_{M_i}$ denotes 
the distance function on $M_i$. 
Note that this normalization 
does not change $\bL(\sigma)$ and $\|\ddot{c}\|$. 
The diffeomorphism 
$\sigma^{p_i}_{q_i} : \Sph^{n-1}_{p_i}\lra \Sph^{n-1}_{q_i}$ defined by 
\eqref{2015_07_30_def_of_subsigma} satisfies 
$\sigma^{p_i}_{q_i} (u_i)= -\dot{\tau}_{u_i} (\pi)$, i.e., $\ell_i=\pi$, 
since $d_{M_i}(p_i,q_i)= \pi$. Here, $\tau_{u_i}(t):= \exp_{p_i}tu_i$ for all $u_i \in \Sph^{n-1}_{p_i}$ and all $t \in [0,\pi]$.\par 
We first construct a map $\wt{F}$ 
satisfying \eqref{2015_07_26_lemma7_1} 
under identifying $T_{q_i}M_i = \R^n$:  
For each $(t, u_1) \in [0,\pi] \times \Sph^{n-1}_{p_1}$, 
we define the bi-Lipschitz homeomorphism $F: M_1\lra M_2$ by 
\begin{equation}\label{2015_08_24_new1}
F(\exp_{p_1}tu_1):=\exp_{p_2}(t I(u_1)), 
\end{equation}
where $I: T_{p_1}M_1\lra T_{p_2}M_2$ denotes a linear isometry. 
Note that $F$ is a diffeomorphism between $M_1\setminus\{q_1\}$ and 
$M_2\setminus\{q_2\}$. 
Define the map $\wt{F}: \B_\pi(o_{q_1})\lra \B_\pi(o_{q_2})$ by 
\begin{equation}\label{2015_08_24_new2}
\wt{F}:= \exp_{q_2}^{-1}\circ F \circ \exp_{q_1},
\end{equation}
where for each $i=1,2$, $\B_\pi (o_{q_i}):=\{v \in T_{q_i}M_i\,|\, \|v\|<\pi\}$ and $o_{q_i}$ denotes the origin of $T_{q_i}M_i$. Since 
\[
F(\exp_{q_1}t \sigma^{p_1}_{q_1}(u_1))
= F(\tau_{u_1}(\pi-t)) 
= \exp_{p_2}(\pi-t)I(u_1)
= \tau_{u_2}(\pi-t)
= \exp_{q_2}t \sigma^{p_2}_{q_2}(u_2),
\] 
where $u_2:= I(u_1)$, we have 
\begin{equation}\label{2015_07_26_theorem1_1} 
\wt{F}(t \sigma^{p_1}_{q_1}(u_1)) 
= \exp_{q_2}^{-1}(\exp_{q_2}t \sigma^{p_2}_{q_2}(u_2))
= t \sigma^{p_2}_{q_2}(u_2) = t \sigma^{p_2}_{q_2}\circ I(u_1)
\end{equation}
By setting $v_1:= \sigma^{p_1}_{q_1}(u_1)$, 
it follows from \eqref{2015_07_26_theorem1_1} that 
\[
\wt{F}(t v_1) = \wt{F}(t \sigma^{p_1}_{q_1}(u_1))  
= t \sigma^{p_2}_{q_2}\circ I(u_1)
= t \sigma^{p_2}_{q_2}\circ I \circ \sigma^{q_1}_{p_1}(v_1) = t \sigma (v_1), 
\]
and hence $\wt{F}$ 
satisfies \eqref{2015_07_26_lemma7_1}.\par  
Since 
$d(\exp_{q_1})_{o}$ and $d(\exp_{q_2}^{-1})_{q_2}$ 
are the identity maps, 
$\partial F(q_1)= \partial \wt{F}(o)$, 
where $o:=o_{q_1}$. By Lemma \ref{2015_07_26_lemma7}, 
$
\partial F(q_1)
= \Conv (\{A_v \,|\, v \in \Sph^{n-1}_{q_1}\})
$ 
holds, where $A_v$ is as in the statement. 
Since $\bL(\sigma)$ satisfies \eqref{2015_07_25_MA_1}, 
by Corollary \ref{2015_08_31_cor}, 
$q_1$ is non-singular for $F$. 
Since $F$ has no singular points on $M_1$, 
it follows from 
Theorem \ref{2015_07_30_theorem2} that 
for any sufficiently small $\eta>0$,  
there exists a smooth approximation 
$f_\eta : M_1\lra M_2$ of $F$, which is an 
immersion. 
Since $f_\eta (M_1)$ is open and closed in $M_2$, 
$f_\eta (M_1)=M_2$. On the other hand, since $M_1$ is compact and $M_2$ 
is Hausdorff, $f_\eta$ is proper. Since $f_\eta$ is a local 
diffeomorphism, $f_\eta$ is a covering 
map from $M_1$ onto $M_2$. 
Since $M_2$ is simply connected, $f_\eta$ is injective. 
Therefore, for any sufficiently small $\eta >0$, 
$f_\eta$ is a diffeomorphism from $M_1$ onto $M_2$.
$\qedd$

\bigskip\noindent
{\it (Proof of the differentiable exotic 
sphere theorem I in the case of \eqref{2015_08_12_MA_2})} 
Let $F:M_1\lra M_2$ and $\wt{F}:\B_\pi (o_{q_1})\lra \B_\pi (o_{q_2})$ 
be the maps defined by \eqref{2015_08_24_new1} and \eqref{2015_08_24_new2}, respectively. 
Moreover, let $M_2$ be isometrically embedded into $\R^m$, 
where $m\ge n+1$, by introducing the induced metric from the space. 
Then, $F:M_1\lra \R^m$. By Lemma \ref{2015_08_13_lem_second} and 
\eqref{2015_08_12_MA_2}, we get 
$\bL(\wt{F})^2 \le 1+ \left\{ (8/\pi)(n-1)\right\}^{-1/2}$. Hence, 
it follows from the proof of \cite[Theorem 5.1]{K} that 
the smooth approximation $\wt{F}_\ve$, which is the convolution 
of $\wt{F}$ and the mollifier $\rho_\ve$, is an immersion 
from some open ball $\B_\delta (o_{q_1})\subset \B_\pi (o_{q_1})$ 
into $\B_\pi (o_{q_2})$. 
Here, $\wt{F}_\ve(y):=\int \wt{F}(x) \rho_\ve (x-y)dx$ for $\ve<\delta$. 
Therefore, $F$ admits a local smooth approximation $F_\ve^{(q_1)}$ 
from an open ball $B_\delta (q_1)=\exp_{q_1}\B_\delta (o_{q_1})$ into $\R^m$, which is an immersion. 
Let $\varphi:M_1\lra \R$ be a smooth function satisfying 
$0\le \varphi \le1$ on $M_1$, $\varphi\equiv 1$ on $\ol{B_r(q_1)}$, 
and $\supp \varphi\subset B_R(q_1)$, where 
$0<r < R<\delta$. 
Define the map $F_\ve:M_1\lra \R^m$ 
by $F_\ve:= (1-\varphi)F + \varphi F_\ve^{(q_1)}$. Since 
\[
F_\ve = 
\begin{cases}
\ F_\ve^{(q_1)} \ &\text{on} \ \ol{B_r(q_1)},\\[1mm]
\ F  \ &\text{on} \ M_1\setminus \supp \varphi, 
\end{cases}
\]
$F_\ve$ is a local diffeomorphism from 
$B_r(q_1)\cup (M_1\setminus \supp \varphi)$ into $\R^m$. 
Since $F$ is smooth on $B_R(q_1) \setminus \ol{B_r(q_1)}$, 
it is easy to see that $F_\ve^{(q_1)}$ uniformly converges to $F$ 
on $\ol{B_R(q_1)} \setminus B_r(q_1)$ as $\ve$ goes to zero in the $C^1$-topology. Thus, $F_\ve$ uniformly converges to $F$ on $\ol{B_R(q_1)} \setminus B_r(q_1)$ as $\ve$ goes to zero in the $C^1$-topology. Now, 
choose a small neighborhood $\cN_{M_2}$ of $M_2\subset \R^m$ such that any $x \in \cN_{M_2}$ admits a unique nearest point 
$\pi_{M_2}(x)\in M_2$. Then, the map $\pi_{M_2} : \cN_{M_2}\lra M_2$ is well-defined and smooth. 
Since $(d\pi_{M_2})_y$ is an orthogonal projection to $T_yM_2$ 
for each $y\in M_2$, 
the map $G_\ve:= \pi_{M_2} \circ F_\ve : M_1 \lra M_2$ 
is an immersion for any sufficiently small $\ve>0$. 
Since $G_\ve (M_1)$ is open and closed in $M_2$, 
$G_\ve (M_1)=M_2$. Moreover, since $M_1$ is compact and $M_2$ 
is Hausdorff, $G_\ve$ is proper, and thus $G_\ve$ is a covering map from $M_1$ onto $M_2$. 
Since $M_2$ is simply connected, $G_\ve$ is injective, 
and hence it is a diffeomorphism 
from $M_1$ onto $M_2$ for any sufficiently small $\ve >0$.$\qedd$

\bigskip\noindent
{\it (Proof of the differentiable exotic sphere theorem II)} 
Let $F:M_1\lra M_2$ and $\wt{F}:\B_\pi (o_{q_1})\lra \B_\pi (o_{q_2})$ 
be the maps defined by \eqref{2015_08_24_new1} and \eqref{2015_08_24_new2}, respectively. By the same argument 
in the proof of the case where \eqref{2015_07_25_MA_1} holds, 
we have $\partial F(q_1)
= \partial \wt{F}(o)
= \Conv (\{A_v \,|\, v \in \Sph^{n-1}_{q_1}\})
$, where $A_v$ is as in Lemma \ref{2015_07_26_lemma7}. 
Since $\sigma$ satisfies \eqref{2016_03_23_TAII_as}, 
by Lemma \ref{2016_03_23_lem1}, 
$q_1$ is non-singular for $F$, i.e., 
$F$ has no singular points on $M_1$. 
By Theorem \ref{2015_07_30_theorem2} and 
the same argument as the proof of the case where \eqref{2015_07_25_MA_1} holds, 
$M_1$ and $M_2$ are diffeomorphic.$\qedd$

\subsection{The reason why we need the restriction of $\ddot{c}$ in the condition \eqref{2015_07_25_MA_1}}\label{sect2_new4}%%%%%%%%%%%%%%%%%%%%%%%%%%%%%%%%%%%%%%%%%%%%%%%%%%%%%%%%%%%%%%%%%%%%%%%%%%%%%%%%%%%%%%%%%%%%%%%%%%%%%%%%%%%%%%%%%%%%%%%%%%%%%%%%%%%%%%%%%%%%%%%%%%%%%%%%%%%%%%%%%%%%%%%%%%%%%%%%

Let $\sigma: S^{n-1}(1)\lra S^{n-1}(1)$ be a diffeomorphism, 
where $S^{n-1}(1):=\{v \in \R^n\,|\,\|v\|=1\}$, and let 
$\bL(\sigma)\ge1$ denote the bi-Lipschitz constant of $\sigma$ 
defined by \eqref{2015_08_26_MA_3}. First, we will prove that, for any $i,j \in \{1,2,\ldots,n\}$, 
\[
|\langle \sigma(e_i),\sigma(e_j) \rangle -\delta_{ij}|\le \bL(\sigma)^2-1,
\]
where $e_i:= (0, \ldots, 1, \ldots, 0) \in \R^n$. 
From this property, $\{\sigma(e_1),\sigma(e_2), \ldots, \sigma(e_n)\}$ looks  
linearly independent if $\bL(\sigma)^2-1$ is sufficiently small.
As Lemma \ref{2015_07_27_lemma9} below  shows, however, 
$\{\sigma(e_1),\sigma(e_2), \ldots, \sigma(e_n)\}$ 
is not always linearly independent. 
Take any vectors $u,v \in S^{n-1}(1)$. By the parallelogram law, we have 
\[
\langle \sigma(u), \sigma(v) \rangle
= \frac{1}{2}\{\|\sigma(u)\|^2 + \|\sigma(v)\|^2 - \|\sigma(u)- \sigma(v)\|^2\}
= \frac{1}{2}\{2 - \|\sigma(u)- \sigma(v)\|^2\}
\]
and 
$
\langle u,v \rangle
= \{\|u\|^2 + \|v\|^2 - \|u- v\|^2\}/2
= \{2 - \|u-v\|^2\}/2
$.
Then, we have 
\begin{align*}
\langle \sigma(u), \sigma(v) \rangle - \langle u,v \rangle 
&= \frac{1}{2}\{\|u-v\|^2 - \|\sigma(u)- \sigma(v)\|^2\}\\[1mm] 
&\le \frac{1}{2}(1-\bL(\sigma)^{-2})\|u-v\|^2 
\le \frac{\bL(\sigma)^2-1}{2}\|u-v\|^2.
\end{align*}
As well as above, 
$
\langle \sigma(u), \sigma(v) \rangle - \langle u,v \rangle 
\ge (1-\bL(\sigma)^2)\|u-v\|^2/2.
$
Thus, 
\[
|\langle \sigma(u), \sigma(v) \rangle - \langle u,v \rangle| \le \frac{\bL(\sigma)^2-1}{2}\|u-v\|^2
\]
holds. In particular, 
$
|\langle \sigma(e_i), \sigma(e_j) \rangle - \langle e_i,e_j \rangle| 
\le \bL(\sigma)^2-1
$.

\begin{lemma}\label{2015_07_27_lemma9}
For any $\ve >0$, there exist $\mbox{\boldmath $a$}_1, \mbox{\boldmath $a$}_2, 
\ldots, \mbox{\boldmath $a$}_n \in \R^n$ such that 
\begin{equation}\label{2015_07_27_lemma9_1}
|\langle \mbox{\boldmath $a$}_i, \mbox{\boldmath $a$}_j \rangle - \delta_{ij}| < \ve,
\end{equation}
but $\{\mbox{\boldmath $a$}_1, \mbox{\boldmath $a$}_2, 
\ldots, \mbox{\boldmath $a$}_n\}$ is not linearly independent. 
\end{lemma}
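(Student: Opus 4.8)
The plan is to give, for each $\ve>0$, an explicit linearly dependent family $\mathbf{a}_1,\dots,\mathbf{a}_n$ of vectors in $\R^n$ whose Gram matrix is entrywise within $\ve$ of the identity, where the number $n$ of vectors (equal to the ambient dimension) is chosen large depending on $\ve$. That $n$ cannot be kept fixed is worth recording at the outset: if $n$ were fixed and $\ve<1/n$, then the conditions \eqref{2015_07_27_lemma9_1} would give $\langle\mathbf{a}_i,\mathbf{a}_i\rangle>1-\ve>(n-1)\ve>\sum_{j\neq i}|\langle\mathbf{a}_i,\mathbf{a}_j\rangle|$, so the Gram matrix would be a strictly diagonally dominant symmetric matrix with positive diagonal, hence positive definite, and the $\mathbf{a}_i$ would be independent. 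Thus the only real content is to recenter an orthonormal basis so that it lands in a hyperplane while barely changing the inner products.

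Concretely, I would first fix $n$ with $1/n<\ve$, let $e_1,\dots,e_n$ denote the standard basis of $\R^n$, set $\mathbf{1}:=e_1+\cdots+e_n$, and put
\[
\mathbf{a}_i:=e_i-\tfrac1n\,\mathbf{1},\qquad i=1,\dots,n.
\]
Since $\sum_{i=1}^n\mathbf{a}_i=\mathbf{1}-\mathbf{1}=0$, the family $\{\mathbf{a}_1,\dots,\mathbf{a}_n\}$ admits a nontrivial linear relation, hence is not linearly independent; indeed all the $\mathbf{a}_i$ lie in the hyperplane $\{x\in\R^n:\langle x,\mathbf{1}\rangle=0\}$, which has dimension $n-1$.

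Finally I would check \eqref{2015_07_27_lemma9_1} by the direct computation
\[
\langle\mathbf{a}_i,\mathbf{a}_j\rangle=\langle e_i,e_j\rangle-\tfrac1n\langle e_i,\mathbf{1}\rangle-\tfrac1n\langle\mathbf{1},e_j\rangle+\tfrac1{n^2}\langle\mathbf{1},\mathbf{1}\rangle=\delta_{ij}-\tfrac1n,
\]
where $\langle e_i,\mathbf{1}\rangle=1$ and $\langle\mathbf{1},\mathbf{1}\rangle=n$; hence $|\langle\mathbf{a}_i,\mathbf{a}_j\rangle-\delta_{ij}|=1/n<\ve$ for all $i,j$, which is exactly \eqref{2015_07_27_lemma9_1}. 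Everything being explicit, there is no genuine obstacle beyond the $\ve$--$n$ interplay noted above. (An equivalent alternative: take $\mathbf{a}_1,\dots,\mathbf{a}_n$ to be the vertices of a regular simplex inscribed in the unit sphere of a hyperplane $\R^{n-1}\subset\R^n$, so that $\|\mathbf{a}_i\|=1$ and $\langle\mathbf{a}_i,\mathbf{a}_j\rangle=-1/(n-1)$ for $i\neq j$; this is a dependent family of $n$ vectors spanning an $(n-1)$-dimensional subspace and satisfies \eqref{2015_07_27_lemma9_1} as soon as $n-1>1/\ve$.)
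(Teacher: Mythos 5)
Your proof is correct, and it takes a genuinely different route from the paper's. The paper leaves an orthonormal block $e_1,\dots,e_k$ untouched and adjoins one extra vector whose inner product with each $e_i$ is $\ve/2$; the choice \eqref{2015_07_27_lemma9_2} of $k\approx 4(1-\ve^2)/\ve^2$ is exactly what lets a vector of (almost) unit length be spread over $\mathrm{span}(e_1,\dots,e_k)$ with such small coefficients, the dependence then coming from having $k+1$ vectors in a $k$-dimensional subspace. (As printed, the paper's $\mathbf{a}_{k+1}=\sum_{j=1}^{k+1}c_je_j$ keeps a nonzero $e_{k+1}$-component $c_{k+1}>0$, so the displayed family is in fact linearly independent; the intended dependent example is recovered by dropping that last term, at the cost of $\|\mathbf{a}_{k+1}\|^2$ lying only within $\ve^2$ of $1$.) You instead perturb all basis vectors symmetrically by subtracting the centroid, so the relation $\sum_i\mathbf{a}_i=0$ is explicit, the Gram matrix is computed exactly as $\delta_{ij}-1/n$, you need only $n>1/\ve$ rather than $k\sim 4/\ve^2$, and you produce exactly $n$ vectors in $\R^n$, which is what the statement literally asks for (the paper's $k+1$ vectors would need to be padded with further $e_j$'s to match). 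Your opening observation that for fixed $n$ and $\ve<1/n$ the conclusion is impossible --- strict diagonal dominance of the Gram matrix forces positive definiteness, hence independence --- is a useful clarification of the quantifier structure that the paper leaves implicit: the point, consistent with the remark following the lemma, is precisely that near-orthonormality with a dimension-free $\ve$ cannot force linear independence once the dimension is allowed to grow.
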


\begin{proof}
Take any $\ve \in (0,1)$, and fix it. 
Choose an integer $k \ge 0$ satisfying 
\begin{equation}\label{2015_07_27_lemma9_2}
\frac{4(1-\ve^2)}{\ve^2}< k \le \frac{4(1-\ve^2)}{\ve^2} +1.
\end{equation}
Set $c_1=c_2= \cdots = c_k = \ve/2$. Since $\sum_{j=1}^{k}c_j^2= k\ve^2/4$, we have $1-\sum_{j=1}^{k}c_j^2= 1- k\ve^2/4$. 
By the right inequality of \eqref{2015_07_27_lemma9_2},
\[
1-\sum_{j=1}^{k}c_j^2\ge 1- \frac{\ve^2}{4}\left\{\frac{4(1-\ve^2)}{\ve^2} +1\right\}= \frac{3}{4}\ve^2 >0.
\]
By the left inequality of \eqref{2015_07_27_lemma9_2},
\[
1-\sum_{j=1}^{k}c_j^2 < 1- \frac{\ve^2}{4}\cdot\frac{4(1-\ve^2)}{\ve^2}= \ve^2.
\]
Thus, we have $0< c_{k+1}:= \sqrt{1-\sum_{j=1}^{k}c_j^2} < \ve$. 
Defining 
$\mbox{\boldmath $a$}_1:= e_1, \mbox{\boldmath $a$}_2:=e_2, 
\ldots, \mbox{\boldmath $a$}_k:=e_k, \mbox{\boldmath $a$}_{k+1}
=\sum_{j=1}^{k+1}c_j e_j\in \R^n$, 
we see $\langle \mbox{\boldmath $a$}_i, \mbox{\boldmath $a$}_j\rangle = \delta_{ij}$ 
for all $i,j < k+1$, $\langle \mbox{\boldmath $a$}_{k+1}, \mbox{\boldmath $a$}_{k+1}\rangle =1$, and $0< \langle \mbox{\boldmath $a$}_{k+1}, \mbox{\boldmath $a$}_{i}\rangle <\ve$ for all $i\not=k+1$. Hence, $\{\mbox{\boldmath $a$}_1, \mbox{\boldmath $a$}_2, 
\ldots, \mbox{\boldmath $a$}_{k+1}\}$ is linearly dependent, but that satisfies \eqref{2015_07_27_lemma9_1}.$\qedd$
\end{proof}

\begin{remark}
By Lemma \ref{2015_07_27_lemma9}, 
it looks impossible to find a bound on the Lipschitz constant 
of $\wt{F}$ independent of the dimension $n$ such that the differential 
of the smooth approximation of $\wt{F}$ at $o$ is injective. 
In this sense, it is natural that the Lipschitz constant 
of the locally bi-Lipschitz map in \cite[Theorem 5.1]{K} 
depends on $n$, and it is 
too for \eqref{2015_08_12_MA_2}.
\end{remark}

\section{Proof of the differentiable twisted sphere theorem}\label{sect4}%%%%%%%%%%%%%%%%%%%%%%%%%%%%%%%%%%%%%%%%%%%%%%%%%%%%%%%%%%%%%%%%%%%%%%%%%%%%%%%%%%%%%%%%%%%%%%%%%%%%%%%%%%%%%%%%%%%%%%%%%%%%%%%%%%%%%%%%%%%%%%%%%%%%%%%%%%%%%%%%%%%%%%%%%%%%%%%%

We need two lemmas in order to prove 
the differentiable twisted sphere theorem. 

\begin{lemma}\label{2014_07_29_lem4.1}
Let $M$ be a complete Riemannian manifold, and let $f:M\lra\R$ be a Lipschitz function on $M$. If $f^{-1}(0)$ is compact, then for any open neighborhood $U$ of $f^{-1}(0)$, 
there exists a positive number $\ve_0$ such that
for all $\ve\in(0,\ve_0)$, $f_\ve^{-1}(0) \subset U$, 
where $f_\ve$ denotes the smooth approximation of $f$ 
defined in Sect.\,\ref{2016_03_12_sect2_sub1}. 
\end{lemma}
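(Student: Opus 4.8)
The plan is to combine a uniform convergence estimate $\|f_\ve-f\|_\infty\to 0$ (which comes essentially for free from Lemma \ref{lemN5}) with the elementary fact that $|f|$ is bounded below by a positive constant on the closed complement $M\setminus U$; together these push the zero set of $f_\ve$ inside $U$ for all small $\ve$.

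First I would derive the uniform estimate. Since $f_\ve=\sum_i\psi_i f_\ve^{(p_i)}$ and $\sum_i\psi_i\equiv1$, for every $q\in M$ one has
\[
|f_\ve(q)-f(q)|\le\sum_i\psi_i(q)\,|f_\ve^{(p_i)}(q)-f(q)|
\le\ve\,\L(f)\sum_i\psi_i(q)\,\L\bigl(\exp_{p_i}|_{\B_{r_i}(o_{p_i})}\bigr),
\]
the last inequality being Lemma \ref{lemN5} applied on each $\supp\psi_i$, valid once $\ve$ is below the cover-dependent thresholds $\ve_i$ appearing there. Because the partition of unity is locally finite — and, in the setting where this lemma is used (Section \ref{sect4}), $M$ is compact, so only finitely many $\psi_i$ are involved and $\inf_i\ve_i>0$ — there is a finite constant $C:=\L(f)\cdot\max_i\L(\exp_{p_i}|_{\B_{r_i}(o_{p_i})})$ with $\sup_{q\in M}|f_\ve(q)-f(q)|\le C\ve$ for all sufficiently small $\ve>0$.

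Next I would use that $M\setminus U$ is closed (hence compact) and contained in $M\setminus f^{-1}(0)$, so by continuity of $f$ the number $a:=\inf_{x\in M\setminus U}|f(x)|$ is strictly positive. Setting $\ve_0:=a/C$ (shrunk, if necessary, so that the estimate above is available), for every $\ve\in(0,\ve_0)$ and every $x\in M\setminus U$ I get $|f_\ve(x)|\ge|f(x)|-|f_\ve(x)-f(x)|\ge a-C\ve>0$. Hence $f_\ve$ has no zeros in $M\setminus U$, i.e. $f_\ve^{-1}(0)\subset U$, which is the claim.

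The main obstacle is the step turning the pointwise bound of Lemma \ref{lemN5} into a genuinely uniform one: its constant involves the Lipschitz constants of the chart exponential maps over the chosen cover, and controlling these uniformly is exactly where compactness enters (on a noncompact complete manifold the zero set of $f_\ve$ could in principle drift to infinity, or $U$ could be a ``thin'' neighbourhood whose complement is not of positive distance from $f^{-1}(0)$). An alternative that sidesteps the uniform estimate is to argue instead that $f_\ve(q)=0$ forces $f$ to vanish or change sign on a ball of radius $\sim C\ve$ about $q$ — since the mollifier is nonnegative with unit mass — so that $f_\ve^{-1}(0)$ lies in the $C\ve$-neighbourhood of $f^{-1}(0)$, and then to invoke compactness of $M\setminus U$ to place that neighbourhood inside $U$ once $\ve$ is small.
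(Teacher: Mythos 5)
Your proposal is correct and takes essentially the same route as the paper's own proof: the uniform estimate $|f_\ve(q)-f(q)|\le \ve\,\L(f)\sum_i\psi_i(q)\L(\exp_{p_i}|_{\B_{r_i}(o_{p_i})})$ obtained from Lemma \ref{lemN5} via the partition of unity, followed by the conclusion that the zero set of $f_\ve$ cannot leave $U$. You merely make explicit the final step (the positive lower bound $a=\inf_{M\setminus U}|f|$ and the role of compactness, which the paper's finite cover and closing ``Hence'' leave implicit), so no further changes are needed.
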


\begin{proof}
For each $\ve>0$, 
let $f_\ve$ be the smooth approximation 
of the function $f$, i.e.,
$f_\ve(q):=\sum_i\psi_i(q)f_\ve^{(p_i)}(q)$. 
Here, $\{\psi_i\}_{i=1}^\ell$ denotes the partition 
of unity subordinate to $\{B_{r_i/2} (p_i)\}$, 
where $\{B_{r_i}(p_i)\}$ is a locally finite covering  
of strongly convex balls of $M$ which satisfies 
$M= \cup_i B_{r_i/2}(p_i)$, and the local approximation 
$f_\ve^{(p_i)}$ is defined by the equation \eqref{eq:N1.0}. 
Since $\sum_{i=1}^\ell \psi_i (q) = 1$, 
we have, by the triangle inequality, 
\begin{equation}\label{2014_05_27_thm5.8_proof1}
|f_\ve (q) - f(q)|
= \left| \sum_{i=1}^\ell \psi_i (q) (f_\ve^{(p_i)} (q) -f(q)) \right| 
\le \sum_{i=1}^\ell \psi_i (q) \left| f_\ve^{(p_i)} (q) -f(q) \right|.
\end{equation}
Applying Lemma \ref{lemN5} to \eqref{2014_05_27_thm5.8_proof1}, 
we see that 
\[
|f_\ve (q) - f(q)|
\le \ve \cdot \L(f) \sum_{i=1}^\ell \psi_i (q) 
\L(\exp_{p_i}|_{\B_{r_i} (o_{p_i})})\]
for all $\ve\in(0,\ve_0)$, where $\ve_0:=\min\{\ve_i\,|\,i=1,2, \ldots,\ell\}$. 
Hence, for any sufficiently small $\ve>0$, 
$f^{-1}_\ve(0)$ is a subset of $U$.$\qedd$
\end{proof}

\begin{lemma}\label{2014_07_29_lem4.2}
Let $A$, $B$ be linear transformations on $\R^n$ 
such that $A|_{\R^{n-1}} = B|_{\R^{n-1}} = \id_{\R^{n-1}}$, and that $\langle A\ora{n}, \ora{n}\rangle >0$, $\langle B\ora{n}, \ora{n}\rangle >0$, where $\R^{n-1}:=\{(x_1, x_2, \ldots, x_n) \in \R^n\,|\,x_n=0\}$ and 
$\ora{n}:= (0, \ldots, 0, 1)$. Then, every element in $\Conv (\{A, B\})$ is of maximal rank. 
\end{lemma}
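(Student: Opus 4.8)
The plan is to note that every element of $\Conv(\{A,B\})$ is of the form $C_\lambda := \lambda A + (1-\lambda)B$ with $\lambda \in [0,1]$, and to show directly that each such $C_\lambda$ is injective, hence of maximal rank. First I would record two elementary consequences of the hypotheses. By linearity, for every $x \in \R^{n-1}$ we have $C_\lambda x = \lambda A x + (1-\lambda) B x = \lambda x + (1-\lambda)x = x$, so $C_\lambda|_{\R^{n-1}} = \id_{\R^{n-1}}$. Likewise $\langle C_\lambda \ora{n}, \ora{n}\rangle = \lambda\langle A\ora{n},\ora{n}\rangle + (1-\lambda)\langle B\ora{n},\ora{n}\rangle$; since both summands on the right are nonnegative and at least one of them is strictly positive (both, when $\lambda\in(0,1)$, and the surviving one, when $\lambda\in\{0,1\}$), we get $\langle C_\lambda\ora{n},\ora{n}\rangle > 0$ for every $\lambda\in[0,1]$.

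Next I would prove injectivity of $C_\lambda$. Suppose $C_\lambda v = o$ for some $v \in \R^n$, and write $v = w + t\,\ora{n}$ with $w \in \R^{n-1}$ and $t \in \R$ using the orthogonal splitting $\R^n = \R^{n-1}\oplus\R\,\ora{n}$. From $C_\lambda w = w$ we obtain $o = C_\lambda v = w + t\, C_\lambda\ora{n}$, hence $w = -t\,C_\lambda\ora{n}$. Taking the inner product with $\ora{n}$ and using $\langle w,\ora{n}\rangle = 0$ gives $t\,\langle C_\lambda\ora{n},\ora{n}\rangle = 0$; since $\langle C_\lambda\ora{n},\ora{n}\rangle > 0$, this forces $t = 0$, and then $w = o$, so $v = o$. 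Thus $C_\lambda$ is injective, i.e. of maximal rank $n$, for every $\lambda\in[0,1]$, which is the assertion.

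I do not expect any genuine obstacle here: the statement is purely linear-algebraic, and the decomposition $\R^n = \R^{n-1}\oplus\R\,\ora{n}$ together with the transversality condition $\langle C_\lambda\ora{n},\ora{n}\rangle > 0$ does all the work. The only point that deserves a word of care is the behaviour at the endpoints $\lambda\in\{0,1\}$, where $C_\lambda$ equals $A$ or $B$ and positivity of $\langle C_\lambda\ora{n},\ora{n}\rangle$ is immediate from the hypothesis; for interior $\lambda$ it follows from the fact that the positive half-line in $\R$ is convex.
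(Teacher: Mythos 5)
Your proof is correct and follows essentially the same route as the paper: decompose a putative kernel vector as $w + t\,\ora{n}$ with $w \in \R^{n-1}$, use that the convex combination restricts to the identity on $\R^{n-1}$, and pair with $\ora{n}$ to force $t=0$ and then $w=o$. The only cosmetic difference is that you isolate $\langle C_\lambda\ora{n},\ora{n}\rangle>0$ as an explicit intermediate step, which the paper uses implicitly.
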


\begin{proof}
Take any $\lambda \in [0,1]$. 
Assume that there exists $\ora{v} +a\ora{n}\in{\R^{n}}$, 
where $a \in \R$ and $\ora{v} \in \R^{n-1}$, 
such that 
$(\lambda A +(1-\lambda) B) (\ora{v} + a\ora{n}) =o$. 
Since $o = (\lambda A +(1-\lambda) B) (\ora{v} + a \ora{n}) 
= \ora{v} + a (\lambda A +(1-\lambda) B) \ora{n}$, we have 
\[
0= \langle  \ora{v} + a (\lambda A +(1-\lambda) B) \ora{n}, \ora{n} \rangle 
= a(\lambda \langle A \ora{n}, \ora{n} \rangle + (1-\lambda) \langle B \ora{n}, \ora{n}\rangle), 
\]
and hence $a=0$. Since $(\lambda A +(1-\lambda) B) (\ora{v} + a\ora{n}) =o$, 
we have $\ora{v} =o$. Thus, $\ora{v} + a \ora{n} =o$. This implies 
that
$\lambda A +(1-\lambda) B$ is non-singular for any $\lambda\in[0,1]$.$\qedd$
\end{proof}

\medskip

In what follows, all notations are the same as those defined 
in Sect.\,\ref{2016_01_11_new}.

\medskip\noindent
{\it (Proof of the differentiable twisted sphere theorem)} 
We first prove (T-1), i.e., $M$ is a twisted sphere: 
Let $d_p, d_q$ be the distance functions from $p, q$, 
respectively, i.e., $d_p(x):=d(p,x)$ for all $x \in M$. 
Consider the Lipschitz function 
$f := d_p - d_q$ on $M$. Remark that $f^{-1}(0) = E_{p,\,q}$ is compact. 
Let  $f_\ve$ denote the smooth approximation of $f$, 
i.e., $f_\ve= (d_p)_\ve - (d_q)_\ve$. 
Applying Lemma \ref{lemN12} to the compact set $K=E_{p\,,q}$,
we obtain that $\nabla f_\ve\ne 0$ on $E_{p\,,q}$ 
for all sufficiently small $\ve>0$. 
Choose any $R>0$ so as to be $p, q \not\in B_R(f^{-1}(0))$. 
Here $B_R(f^{-1}(0)):=\{x\in M\,|\,\wt{d}(f^{-1}(0), x)<R\}$, 
where $\wt{d}(f^{-1}(0), x)= \min_{y\in f^{-1}(0)} d(y,x)$. 
By Lemma \ref{2014_07_29_lem4.1}, 
$f_\ve^{-1}(0) \subset B_R(f^{-1}(0))$ 
for any sufficiently small $\ve>0$, 
and hence $f_\ve^{-1}(0)$ is a regular compact 
hypersurface. 
Choose such a sufficiently small $\ve$, and fix it in the following.
Now, let $\ol{D_\ve(p)}:=\{ x\in M\,| \, f_\ve(x)\leq 0\}$ 
and $\ol{D_\ve(q)}:=\{ x\in M\,| \, f_\ve(x)\geq 0\}$. 
Since $d_{p} $ is smooth on a punctured convex ball $\cP (p)$ at $p$, 
$\| \nabla (d_p)_\ve-\nabla d_p \|$ is sufficiently small on $\cP (p)$. Therefore, we can assume that there exists a non-zero smooth vector field $X_+$ on $\ol{D_\ve(p)}\setminus\{p\}$ such that
\[
X_+ = 
\begin{cases}
\ \nabla d_p \ &\text{on} \ \cP (p),\\[1mm]
\ \nabla (d_p)_\ve  \ &\text{on a neighborhood of} \ f_\ve^{-1}(0).
\end{cases}
\]
For each $v \in \Sph^{n-1}_{p}$, 
let $\tau_+ (t, v)$ be the integral curve of $X_+$ 
with initial conditions $\tau_+ (0, v)= p$ and 
$\frac{\partial \tau_+}{\partial t} (0, v) = v$.  
Since $X_+ = \nabla d_{p}$ on $\cP (p)$, 
$\tau_+ (t, v) = \exp_{p} tv$ holds 
for all sufficiently small $t\ge0$. 
It follows from Lemma \ref{lemN12} that 
$\angle (\nabla (d_p)_\ve, \nabla (d_q)_\ve) > \pi/2$ on $E_{p\,,q}$. 
Thus, there exists a smooth solution $t=t_+(v) > 0$ of 
$f_\ve (\tau_+(t, v)) =0$, 
since $X_+$ is not tangent to $f_\ve^{-1} (0)$ at each point of the hypersurface. 
Then, we have the diffeomorphism 
$G_+$ from $\ol{D_\ve(p)}$ onto $S^n_+(1)
:=\{(x_1,x_2,\ldots,x_{n+1})\in S^n(1)\,|\, x_{n+1}\ge  0\}$ 
defined by 
\[
G_+(\tau_+(t,v)):=\exp_{N} \frac{t\pi}{2t_+(v)}I(v)
\] 
for all $(t,v)\in[0,\infty)\times \Sph^{n-1}_{p}$ with $t\leq t_+(v)$, 
where $I$ denotes a linear isometry from $T_pM$ onto $T_{N}S^n(1)$.
In the same way as $G_+$, we have the diffeomorphism 
$G_-$ from $\ol{D_\ve(q)}$ onto $S^n_-(1)
:=\{(x_1,x_2,\ldots,x_{n+1})\in S^n(1)\,|\, x_{n+1}\le 0\}$. 
Thus, we can define the induced metrics $g_{\pm}:=G^*_{\pm}g_0$
on $\ol{D_\ve(p)}, \ol{D_\ve(q)}$ from $S^n(1)$, 
where $g_0$ denotes the metric of $S^n(1)$, 
so that $\ol{D_\ve(p)}, \ol{D_\ve(q)}$ are isometric 
to $S^n_+(1), S^n_-(1)$, respectively. 
Let $\exp^{g_+}$, $\exp^{g_-}$ be exponential maps 
on tangent spaces at $p$, $q$ of $\ol{D_\ve(p)}$, 
$\ol{D_\ve(q)}$ with respect to $g_\pm$, respectively. 
For each $(t, v) \in [0, \pi] \times \Sph_{p}^{n-1}$, 
we define the point $e(t, v)$ on $M$ with respect 
to $g_\pm$ by 
\[
e (t, v) = 
\begin{cases}
\ \exp^{g_+} (t v) \ &\text{on} \ [0,\pi/2],\\[1mm]
\ \exp^{g_-}( (\pi - t)\sigma^p_q(v))  \ &\text{on} \ [\pi/2,\pi],
\end{cases}
\]
where $\sigma^p_q: \Sph_p^{n-1}\lra\Sph_q^{n-1}$ is the diffeomorphism 
satisfying $\exp^+(\pi v/2 )=\exp^-(\pi \sigma^p_q (v)/2)$. 
Thus, we have a boundary diffeomorphism 
$h_{\sigma^p_q}:\partial S^n_+(1)\lra \partial S^n_-(1)$ induced 
from $\sigma^p_q$. Hence, 
$M= S^n_+(1)\cup_{h_{\sigma^p_q}}S^n_-(1)$ 
is twisted.\par
We next prove (T-2), i.e., we construct a bi-Lipschitz homeomorphism 
from $M$ to $S^n(1)$ that admits a diffeomorphism 
between $M \setminus \{q\}$ and $S^n(1) \setminus \{S\}$: 
For every $(t,v)\in[0,\pi]\times\Sph_p^{n-1}$, define 
the map $F:M\lra S^n(1)$ by 
\[
F(e(t,v)):=\exp_N tI(v). 
\]
It is not difficult to see that $F$ is bi-Lipschitz.
Since $F$ is a local diffeomorphism on 
$M \setminus (f^{-1}_\ve (0)\cup \{q\})$, 
$F$ has no singular points on $M \setminus (f^{-1}_\ve (0)\cup \{q\})$. 
Let $F^+$ and $F^- $ 
be the smooth extensions of $F|_{\ol{D_\ve(p)}}$ 
and $F|_{\ol{D_\ve(q)}\setminus\{q\}}$, respectively. 
Since $\partial F(x)= \Conv(\{dF^+_x, dF^-_x\})$ 
for each $x \in f^{-1}_\ve (0)$, 
it follows from Lemma \ref{2014_07_29_lem4.2} that 
any element in $\partial F(x)$ is of maximal rank. 
Hence, $F$ has no singular points on $M \setminus \{q\}$. 
Since $B_R(f^{-1}(0))$ is open in $M_1$ 
and $f^{-1}_\ve(0) \subset B_R(f^{-1}(0))$, 
there exists $r\in (0,R)$ such that 
$B_r(f^{-1}_\ve(0)) \subset B_R(f^{-1}(0))$. 
Note that $p, q \not\in B_r(f^{-1}_\ve(0))$. 
Let $\varphi$ be a smooth function on $M$ satisfying 
$0\le\varphi \le1$ on $M$, 
$\varphi\equiv1$ on $\ol{B_r(f^{-1}_\ve(0))}$, and 
$\supp \varphi \subset B_R(f^{-1}(0))$. 
Define the map 
$G_\ve:M\lra \R^{n+1}$ 
by $G_\ve :=(1-\varphi)F +\varphi F_\ve$, 
where $F_\ve:M\lra \R^{n+1}$ 
denotes the smooth approximation of $F$ defined by \eqref{eq:B2}. 
It is clear that 
\[
G_\ve = 
\begin{cases}
\ F_\ve \ &\text{on} \ \ol{B_r(f^{-1}_\ve(0))},\\[1mm]
\ F  \ &\text{on} \ M\setminus \supp \varphi.
\end{cases}
\]
Since $F$ has no singular points 
on $\ol{B_r(f^{-1}_\ve(0))}$, 
by the same argument as the proof 
of Lemma \ref{lemB11}, $dF_\ve$ is injective  
on $B_r(f^{-1}_\ve(0))$. Since $\ve>0$ is sufficiently small, 
by \eqref{2014_05_27_thm5.8_proof1}, 
we can assume that $|f_\ve-f|\le r$ on $M$. 
Then, since $F$ is smooth 
on $B_R(f^{-1}(0)) 
\setminus \ol{B_r(f^{-1}_\ve(0))}$, 
$F_\ve$ uniformly 
converges to $F$ on $\ol{B_R(f^{-1}(0))} 
\setminus B_r(f^{-1}_\ve(0))$ 
as $\ve\downarrow0$ in the $C^1$-topology. 
Thus, we see that 
$G_\ve$ uniformly 
converges to $F$ on $\ol{B_R(f^{-1}(0))} 
\setminus B_r(f^{-1}_\ve(0))$ 
as $\ve\downarrow0$ in the $C^1$-topology.
Hence, $G_\ve:M\lra\R^{n+1}$ is a bi-Lipschitz homeomorphism 
which is a local diffeomorphism on $M\setminus\{q\}$. 
Define the map $\psi_\ve:M\lra S^n(1)$ 
by $\psi_\ve:=\pi_{S^n(1)}\circ G_\ve$, 
where $\pi_{S^n(1)}:\R^{n+1}\setminus\{o\}\lra S^n(1)$ 
denotes the distance projection. 
By a similar argument to the proof of the differentiable 
exotic sphere theorem, we see that $\psi_\ve$ 
is a covering map from $M$ 
onto $S^n(1)$. Since $S^n(1)$ is simply connected, 
$\psi_\ve$ is injective. Therefore, 
$\psi_\ve$ is a bi-Lipschitz homeomorphism from $M$ onto $S^n(1)$ 
which is a diffeomorphism except for $q$.\par 
Finally, we prove (T-3): 
Define the map $\wt{F}: \B_\pi(o_q)\lra T_S S^n(1)$ by 
$\wt{F}:= \exp_S^{-1}\circ F \circ \exp^{g_-}$. 
By a similar argument to the proof of the differentiable 
exotic sphere theorem, we then see 
that $\wt{F}(tv)= t \sigma^N_S\circ I \circ (\sigma^{p}_{q})^{-1}(v)$ 
for all $(t, v) \in [0, \pi] \times \Sph_{q}^{n-1}$. 
Here, $\sigma^{N}_{S}$ denotes the diffeomorphism defined 
by \eqref{2015_07_30_def_of_subsigma}. 
In particular, 
$\wt{F}$ satisfies \eqref{2015_07_26_lemma7_1}. Hence, 
by Lemma \ref{2015_07_26_lemma7}, 
$\partial F(q)= \Conv (\{A_v \,|\, v \in \Sph^{n-1}_{q}\})$, 
where $A_v$ is as in the lemma. 
For a geodesic segment $\gamma:[0,\pi]\lra \Sph^{n-1}_q$, 
let $c:[0,\pi]\lra \Sph^{n-1}_{S}$ be the curve defined 
by 
$c:= \sigma^N_S\circ I \circ (\sigma^{p}_{q})^{-1} \circ \gamma$.  Now, we assume $\bL((\sigma^{p}_{q})^{-1})$ satisfies \eqref{2015_07_25_MA_1} 
for all geodesic segments 
$\gamma ([0,\pi])\subset \Sph^{n-1}_q$ with $\|\dot{\gamma}\|\equiv 1$. 
Since 
\[
\bL(\sigma^N_S\circ I \circ (\sigma^{p}_{q})^{-1})
=\bL((\sigma^{p}_{q})^{-1}),
\]
$\bL(\sigma^N_S\circ I \circ (\sigma^{p}_{q})^{-1})$ 
satisfies \eqref{2015_07_25_MA_1}. 
By Corollary \ref{2015_08_31_cor}, 
$q$ is non-singular for $F$, 
and hence $F$ has no singular points on $M$. 
Therefore, it follows from Theorem \ref{2015_07_30_theorem2} 
that $M$ and $S^n(1)$ are diffeomorphic. 
In the case where \eqref{2015_08_12_MA_2} or \eqref{2016_03_23_TAII_as} holds, 
we can prove that $M$ and $S^n(1)$ are diffeomorphic 
by the same arguments as Sect.\,\ref{sect2_new3}.
$\qedd$

\begin{remark}\label{2015_08_08_rem} 
By applying \cite[Proposition C]{W} to the two discs 
$\ol{D_\ve(p)}$ and $\ol{D_\ve(q)}$ in the proof above successively, 
we get a new Riemannian metric on $M,$ points $p_1\in D_\ve (p)$ 
and $q_1\in D_\ve (q)$ such that all geodesics (with this new metric) 
emanating from $p_1$ (respectively $q_1$) pass through the boundary of $\ol{D_\ve (p)}$ 
(respectively $\ol{D_\ve (q)}$) perpendicularly. Therefore, 
any geodesic emanating from $p_1$ passes through $q_1$, i.e., the cut locus of $p_1$ consists of $q_1$.
\end{remark}

\begin{acknowledgement}
The authors express their sincere thanks to Professors K. Grove, P. Petersen, and F. Wilhelm, who  pointed out a mistake in the first version, back then entitled
``A sufficient condition for a pair of bi-Lipschitz
homeomorphic manifolds to be diffeomorphic and sphere theorems'', of this article. 
Thanks to the indication, they could 
arrive at the differentiable exotic sphere theorem. The first named author would like to thank Professor T. Shioya, who suggested the 
shorter proof of Lemma \ref{2014_02_05_lem1} than that in the first version. He is deeply grateful to Professor F.H. Clarke for his encouragement, to Professors 
M. Gromov, S. Ohta, 
and T. Yamaguchi for their comments on the first one, 
and to Professors Y. Agaoka, M. Ishida, 
and K. Yasui for their having given knowledge 
about exotic structures, which has had a big 
influence on the structure of Sect.\,\ref{sec1}. 
Finally, they thank the referees cordially for careful reading of the manuscript and for helpful and valuable comments on it, 
which have improved the presentation of this article.
\end{acknowledgement}

\medskip

\begin{flushleft}
K.~Kondo\\
Department of Mathematical Sciences, Yamaguchi University\\
Yamaguchi City, Yamaguchi Pref. 753-8512, Japan\\
{\small e-mail: {\tt keikondo@yamaguchi-u.ac.jp}}

\medskip

M.~Tanaka\\
Department of Mathematics, Tokai University\\
Hiratsuka City, Kanagawa Pref. 259-1292, Japan\\
{\small e-mail: {\tt tanaka@tokai-u.jp}}
\end{flushleft}

\end{document}